\newtheorem{theorem}{Theorem}[section]
\newtheorem{lemma}[theorem]{Lemma}
\newtheorem{proposition}[theorem]{Proposition}
\theoremstyle{definition}
\newtheorem{definition}[theorem]{Definition}
\theoremstyle{remark}
\numberwithin{equation}{section}
\newcommand{\gd}{\Delta}
\newcommand{\ggd}{\delta}
\newcommand{\inpt}[1]{\langle #1 \rangle}
\newcommand{\ms}{\mathscr}
\newcommand{\gw}{\Omega}
\newcommand{\ga}{\gamma}
\newcommand{\gl}{\lambda}
\newcommand{\gL}{\Lambda}
\newcommand{\om}{\omega}
\newcommand{\vp}{\varphi}
\newcommand{\pdr}{\partial}
\newcommand{\fn}[1]{\|#1\|_{L^{4}}^{4}}
\newcommand{\sn}[1]{\|#1\|_{L^6}^6}
\newcommand{\tup}{\textup}
\newcommand{\csg}{\{ S(t)\}_{t\geq0}}
\begin{document}

\title[THE EXTENDED BRUSSELATOR SYSTEM]{\textbf{Global Dissipative Dynamics of\\ the Extended Brusselator System}}

\author[Yuncheng You]{Yuncheng You$^{*}$} 
\address{$^{*}$Department of Mathematics and Statistics, University of South Florida, Tampa, Florida 33620, USA and 
Department of Applied Mathematics, Shanghai Normal University, Shanghai 200234, China}
\email{you@mail.usf.edu}
\thanks{The first author is partly supported by NSF Grant \#DMS-1010998.}

\author[Shengfan Zhou]{Shengfan Zhou$^{\dag}$}
\address{$^{\dag}$Department of Applied Mathematics, Shanghai Normal University, Shanghai 200234, China}
\email{sfzhou@shu.edu.cn}
\thanks{The second author is partly supported by the Leading Academic Discipline Project of Shanghai Normal University (No.DZL707), the National Natural Science Foundation of China under the Grant No.10771139, the Ministry of Education of China (No. 20080270002), and the Foundation of Shanghai Talented Persons (No.049).}

\subjclass[2000]{37L30, 35B40, 35B41, 35K55, 35K57, 35Q80}



\keywords{Brusselator system, dissipative dynamics, global attractor, absorbing set, asymptotic compactness, fractal dimension}

\begin{abstract}
The existence of a global attractor for the solution semiflow of the extended Brusselator system in the $L^2$ phase space is proved, which is a cubic-autocatalytic and partially reversible reaction-diffusion system with linear coupling between two compartments. The method of grouping and re-scaling estimation is developed to deal with the challenge in proving the absorbing property and the asymptotic compactness of this typical multi-component reaction-diffusion systems. It is also proved that the global attractor is an $(H, E)$ global attractor with the $L^\infty$ regularity and that the Hausdorff dimension and the fractal dimension of the global attractor are finite. The results and methodology can find many applications and further extensions  in complex biological and biochemical dynamical systems.
\end{abstract}

\maketitle



\section{\textbf{Introduction}}

Consider an extended Brusselator system consisting of cubic-autocatalytic and partially reversible reaction-diffusion equations with linear coupling between two compartments,
\begin{align}
	\frac{\pdr u}{\pdr t} &= d_1 \gd u + a - (b + k)u + u^2 v + D_1 (w - u) + N\vp, \label{equ} \\
	\frac{\pdr v}{\pdr t} &= d_2 \gd v + bu - u^2 v + D_2 (z - v), \label{eqv} \\
	\frac{\pdr \vp}{\pdr t} &= d_3 \gd \vp + ku - (\gl + N)\vp + D_3 (\psi - \vp),  \label{eqp} \\
	\frac{\pdr w}{\pdr t} &= d_1 \gd w + a - (b + k)w + w^2 z + D_1 (u - w)  + N\psi, \label{eqw} \\
	\frac{\pdr z}{\pdr t} &= d_2 \gd z + bw - w^2 z + D_2 (v - z),  \label{eqz} \\
	\frac{\pdr \psi}{\pdr t} &= d_3 \gd \psi + kw - (\gl + N)\psi + D_3 (\vp - \psi),  \label{eqs} 
\end{align}
for $(t, x) \in (0, \infty) \times \gw$, where $\gw \subset \Re^{n} (n \leq 3)$ is a bounded domain with a locally Lipschitz continuous boundary, and the coefficients $d_i's,  D_i's, (i = 1, 2, 3), a, b, k, \gl$, and $N$ are positive constants. Note that there is a linear coupling between the two subsystems of components $(u, v, \vp)$ and $(w, z, \psi)$, which can be interpreted as the interaction between two cells or compartments. Given the homogeneous Dirichlet boundary condition 
\begin{equation} \label{dbc}
	u(t, x) = v(t, x) = \vp(t,x) = w (t, x) = z (t, x) = \psi(t,x) = 0, \quad t > 0, \; x \in \partial \gw,
\end{equation}
and an initial condition
\begin{equation} \label{ic}
	\begin{split}
        u(0,x) &= u_0 (x), \; v(0, x) = v_0 (x), \; \vp(0,x) = \vp_0 (x),  \\
        w(0,x) &= w_0 (x), \; z(0, x) = z_0 (x), \; \psi(0,x) = \psi_0 (x), \quad x \in \gw, 
        \end{split}
\end{equation}
we shall study the asymptotic dynamics of the semiflow of the weak solutions to this initial-boundary value problem \eqref{equ} -- \eqref{ic}. 

Historically the Brussels school led by the renowned physical chemist and Nobel Prize laureate (1977), Ilya Prigogine, made remarkable contributions in the research of complexity of cubic autocatalytic reactions. The mathematical model signifying their seminal work was named as the Brusselator, which is originally a system of two ordinary differential equations shown below, cf.  \cite{PL68, AN75, sS94}.  The scheme of chemical reactions described by the Brusselator kinetics is
\begin{equation} \label{rc}
	\begin{split}
	\textup{A} & \longrightarrow \textup{U}, \quad \textup{B} + \textup{U} \longrightarrow \textup{V} + \textup{D}, \\
	2 \textup{U} + \textup{V} & \longrightarrow 3 \textup{U}, \quad \quad \; \; \textup{U} \longrightarrow \textup{E}, 
	\end{split}
\end{equation}
where \textup{A}, \textup{B}, \textup{U}, \textup{V}, \textup{D}, and \textup{E} are chemical reactants or products. Let $u(t, x)$ and $v(t, x)$ be the concentrations of \textup{U} and \textup{V}. Assume that the concentrations of the input compounds \textup{A} and \textup{B} are held constant during the reaction process, denoted by $a$ and $b$ respectively. Also assume that these elementary reactions are not reversible. Then by the Fick's law and the law of mass action, and through non-dimensionalization, one can formulate a system of two nonlinear reaction-diffusion equations called (diffusive) \textbf{Brusselator equations},
\begin{equation}  \label{br}
	\begin{split}
	\frac{\partial u}{\partial t} &= d_1 \gd u + a - (b + k)u + u^{2}v, \\
	\frac{\partial v}{\partial t} &= d_2 \gd v + bu - u^{2}v.
	\end{split}
\end{equation}

The known examples of autocatalytic reactions which can be modeled by the Brusselator equations include ferrocyanide-iodate-sulphite reaction, chlorite-iodide-malonic acid (CIMA) reaction, arsenite-iodate reaction, many enzyme catalytic reactions, cf. \cite{AO78, AN75, BD95}.

In 1993 two consecutive papers  \cite{LMOS93, jP93} discovered a variety of interesting self-replicating pattern formation associated with cubic-autocatalytic reaction-diffusion systems, respectively by an experimental approach and a numerical simulation approach. Since then numerous studies by mathematical and computational analysis have shown that the cubic-autocatalytic reaction-diffusion systems such as Brusselator equations and Gray-Scott equations \cite{GS83, GS84} exhibit rich spatial patterns (including but not restricted to Turing patterns) and complex bifurcations \cite{AO78, BGMLF00, BD95, gD87, DKZ97, FXWO08, IK06, JS08, aK83, PPG01, PW05, WW03} as well as interesting dynamics \cite{CQ07, ER83, KCD97, KEW06, yQ07, RR95, RPP97, Wet96, YZE04} on 1D or 2D domains.

For Brusselator equations and the other cubic-autocatalytic model equations of space dimension $n \leq 3$, however, we have not seen substantial advancing results in the front of global dynamics until recently \cite{yY07, yY08, yY09a, yY09b, yY10}. 

The extended Brusselator system in this work features the coupling of two compartment variables and the reversible reaction U $\leftrightarrows$ E in both compartments, where E is the main product being removed at a constant rate $\gl$. This system consisting of six coupled components with partial reversibility is closer to the biological network dynamics \cite{GLI07, hK02}. 

Multicell models in biological dynamics generically mean the coupled ODEs or PDEs with large number of unknowns representing component substances, which appear widely in the literature of systems biology as well as cell and molecular biology \cite{SM82, SS95, TCN01}.  

For this extended Brusselator system and many other biochemical activator-inhibitor reactions, the asymptotically dissipative condition in vector version
$$
	\lim_{|s| \to \infty} f (s) \cdot s \leq C,
$$
where $C$ is a nonnegative constant, is inherently not satisfied by the nonlinearity involving the opposite-signed autocatalytic terms and the opposite-signed coupling terms. This and the additional equations of variables $\vp$ and $\psi$ due to the partially reversing reactions cause substantial difficulties in \emph{a priori} estimation of all six components in proving the absorbing property and the asymptotic compactness of the solution semiflow. The novel mathematical feature in this paper is to overcome these obstacles by a method of \emph{grouping and re-scaling estimation}. This methodology will be potentially very useful in analyzing biological network dynamics of multicomponent reaction-diffusion systems.

We start with the formulation of an evolutionary equation associated with this system. Define the product Hilbert spaces as follows,
\begin{equation*}
    	H = [L^2 (\gw)]^6, \quad E =  [H_{0}^{1}(\gw)]^6, \quad  \Pi = [H_{0}^{1}(\gw) \cap H^{2}(\gw)]^6.
\end{equation*}
These six-component spaces are the phase spaces of different regularity for the component functions $u(t, \cdot), v(t, \cdot), \vp (t, \cdot), w(t, \cdot),z (t, \cdot)$ and $\psi(t, \cdot)$. The norm and inner-product of $H$ or any component space $L^2 (\gw)$ will be denoted by $\| \, \cdot \, \|$ and $\inpt{\,\cdot , \cdot\,}$, respectively. The norm of $L^p (\gw)$ or the product space $\mathbb{L}^p (\gw) = [L^p (\gw)]^6$ will be denoted by $\| \, \cdot \, \|_{L^{p}}$ if $p \ne 2$. By the Poincar$\Acute{e}$ inequality and the homogeneous Dirichlet boundary condition \eqref{dbc}, there is a constant $\ga > 0$ such that
\begin{equation} \label{pcr}
    \| \nabla \xi \|^2 \geq \ga \| \xi \|^2, \quad \textup{for} \;  \xi \in H_{0}^{1}(\gw) \; \textup{or} \; E,
\end{equation}
and we take $\| \nabla \xi \|$ to be the equivalent norm $\| \xi \|_E$ of the space $E$ or the equivalent norm $\| \xi \|_{H_{0}^{1}(\gw)}$. We use $| \, \cdot \, |$ to denote an absolute value or a vector norm in a Euclidean space.

By the Lumer-Phillips theorem and the analytic semigroup generation theorem \cite{SY02}, the linear operator
\begin{equation} \label{opA}
        A =
        \begin{pmatrix}
            d_1 \gd     & 0    &0    &0    &0    &0\\[3pt]
            0 & d_2 \gd    &0    &0    &0   &0\\[3pt]
            0 &0  &d_3 \gd &0    &0   &0\\[3pt]
            0 &0  &0  &d_1\gd  &0    &0\\[3pt]
            0 &0  &0  &0    &d_2 \gd   &0\\[3pt]
            0 &0  &0  &0   &0    &d_3 \gd
        \end{pmatrix}
        : D(A) (= \Pi) \longrightarrow H
\end{equation}
is the generator of an analytic $C_0$-semigroup on the Hilbert space $H$, which will be denoted by $\{e^{At}\}_{t \geq 0}$. By Sobolev embedding theorem, $H_{0}^{1}(\gw) \hookrightarrow L^6(\gw)$ is a continuous embedding for $n \leq 3$. Using the generalized H\"{o}lder inequality, we have
$$
	\| u^{2}v \| \leq \| u \|_{L^6}^2 \| v \|_{L^6}, \quad \| w^{2}z \| \leq \| w \|_{L^6}^2 \| z \|_{L^6}, \quad  \textup{for} \; u, v, w, z \in L^6 (\gw).
$$
By these facts one can verify that the nonlinear mapping
\begin{equation} \label{opF}
    f(g) =
        \begin{pmatrix}
            a - (b+k)u + u^2 v + D_1 (w - u) + N\vp  \\[3pt]
            bu - u^2 v + D_2 (z - v) \\[3pt]
            ku - (\gl + N)\vp + D_3 (\psi - \vp) \\[3pt]
            a - (b+k)w + w^2 z + D_1 (u - w) + N\psi  \\[3pt]
            bw - w^2 z + D_2 (v - z) \\[3pt]
            kw - (\gl + N)\psi + D_3 (\vp - \psi) 
        \end{pmatrix}
        : E \longrightarrow H,
\end{equation}
where $g = (u, v, \vp, w, z, \psi)$, is a locally Lipschitz continuous mapping defined on $E$. Then the initial-boundary value problem \eqref{equ}--\eqref{ic} of this extended Brusselator system is formulated into an initial value problem:
\begin{equation} \label{eveq} 
	\begin{split}
   	&\frac{dg}{dt} = A g + f(g), \quad t > 0. \\
	g(0) = g_0 &= \textup{col} \, (u_0, v_0, \vp_0, w_0, z_0, \psi_0) \in H.
	\end{split}
\end{equation}
Here $g (t) = \textup{col} \, (u(t), v(t), \vp(t), w(t), z(t), \psi(t))$ can be written as $(u(t), v(t), \vp(t), w(t), z(t), \psi(t))$ for simplicity. Accordingly we shall write $g_0 = (u_0, v_0, \vp_0, w_0, z_0, \psi_0)$. 

The following proposition will be used in proving the existence of a weak solution of the initial value problem \eqref{eveq}. Its proof is seen in \cite[Theorem II.1.4]{CV02} and in \cite[Proposition I.3.3]{BP86}.
\begin{proposition} \label{P1} 
Consider the Banach space
\begin{equation} \label{wsp}
	W(0,\tau) = \left\{\eta (\cdot): \eta \in L^2 (0, \tau; E) \; \textup{and}\; \partial_t \eta \in L^2 (0, \tau; E^*)\right\}
\end{equation}
with the norm
$$
	\|\eta \|_W = \|\eta \|_{L^2 (0, \tau; E)} + \|\partial_t \eta \|_{L^2 (0, \tau; E^*)}.
$$
Then the following statements hold:

\textup{(a)} The embedding $W(0, \tau) \hookrightarrow L^2 (0, \tau; H)$ is compact.

\textup{(b)} If $\eta \in W(0, \tau)$, then it coincides with a function in $C([0, \tau]; H)$ for a.e. $t \in [0, \tau]$.

\textup{(c)} If $\eta, \zeta \in W(0, \tau)$, then the function $t \to \inpt{\eta (t), \zeta (t)}_H$ is absolutely continuous on $[0, \tau]$ and
$$
	\frac{d}{dt} \inpt{\eta (t), \zeta (t)} = \left(\frac{d\eta}{dt}, \zeta(t)\right) + \left(\eta(t), \frac{d\zeta}{dt}\right), \; \tup{(} a.e.\tup{)} \; t \in [0, \tau],
$$
where $(\cdot , \cdot)$ is the $(E^*, E)$ dual product.
\end{proposition}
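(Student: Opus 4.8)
\emph{Sketch of proof.} All three assertions are instances of the classical Lions--Aubin functional framework for the Gelfand triple $E \hookrightarrow H \hookrightarrow E^*$, in which $E \hookrightarrow H$ is dense, continuous, and --- since $\gw$ is bounded --- compact by the Rellich--Kondrachov theorem, while $H \hookrightarrow E^*$ is dense and continuous. The plan is to build everything on two ingredients: the Ehrling-type interpolation inequality and the density of $C^1([0,\tau];E)$ in $W(0,\tau)$.

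First I would record the \textbf{Ehrling inequality}: for every $\ve > 0$ there is $C_\ve > 0$ with
\begin{equation*}
	\| \xi \| \leq \ve \, \| \xi \|_E + C_\ve \, \| \xi \|_{E^*}, \qquad \xi \in E ,
\end{equation*}
proved by the standard contradiction argument from the compactness of $E \hookrightarrow H$ and the continuity of $H \hookrightarrow E^*$. For part (a) I would take a bounded sequence $\{\eta_n\}$ in $W(0,\tau)$ and, by reflexivity of $L^2(0,\tau;E)$ and $L^2(0,\tau;E^*)$, pass to a subsequence with $\eta_n \rightharpoonup \eta$ in $L^2(0,\tau;E)$ and $\partial_t\eta_n \rightharpoonup \partial_t\eta$ in $L^2(0,\tau;E^*)$; replacing $\eta_n$ by $\eta_n - \eta$ reduces the claim to showing that such a sequence converges to $0$ strongly in $L^2(0,\tau;H)$. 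Integrating the Ehrling inequality over $[0,\tau]$ gives
\begin{equation*}
	\| \eta_n \|_{L^2(0,\tau;H)}^2 \leq \ve^2 \, \| \eta_n \|_{L^2(0,\tau;E)}^2 + C_\ve^2 \, \| \eta_n \|_{L^2(0,\tau;E^*)}^2 ,
\end{equation*}
and since the first term is $O(\ve^2)$ uniformly in $n$, the matter reduces to proving $\eta_n \to 0$ strongly in $L^2(0,\tau;E^*)$. For this I would combine the uniform time-translation estimate
\begin{equation*}
	\int_0^{\tau - h} \| \eta_n(t+h) - \eta_n(t) \|_{E^*}^2 \, dt \;\leq\; h^2 \, \sup_n \| \partial_t\eta_n \|_{L^2(0,\tau;E^*)}^2 ,
\end{equation*}
which follows from $\eta_n(t+h) - \eta_n(t) = \int_t^{t+h} \partial_t\eta_n(s)\, ds$ and the Cauchy--Schwarz inequality, with the compactness of $E \hookrightarrow E^*$; the Fr\'echet--Kolmogorov compactness criterion in the Bochner space $L^2(0,\tau;E^*)$ then yields a further subsequence converging strongly there, necessarily to the weak limit $0$. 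As every subsequence of $\{\eta_n\}$ has a further subsequence with this property, $\eta_n \to 0$ in $L^2(0,\tau;E^*)$, and letting $\ve \to 0$ completes (a).

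For (b) and (c) the crucial preliminary is the \textbf{density of $C^1([0,\tau];E)$ in $W(0,\tau)$}, which I would obtain by extending $\eta$ off $[0,\tau]$ and mollifying in time, arranged so that the regularizations converge to $\eta$ in $L^2(0,\tau;E)$ while their derivatives converge to $\partial_t\eta$ in $L^2(0,\tau;E^*)$. For $\eta,\zeta \in C^1([0,\tau];E)$ the elementary product rule in $H$ gives $\tfrac{d}{dt}\inpt{\eta(t),\zeta(t)} = \inpt{\partial_t\eta(t),\zeta(t)} + \inpt{\eta(t),\partial_t\zeta(t)}$, and since the arguments lie in $E$ each $H$-inner product equals the $(E^*,E)$ dual product, so that upon integration
\begin{equation*}
	\inpt{\eta(t),\zeta(t)} - \inpt{\eta(s),\zeta(s)} = \int_s^t \left[ \left(\partial_t\eta(r),\zeta(r)\right) + \left(\eta(r),\partial_t\zeta(r)\right) \right] dr .
\end{equation*}
Taking $\zeta = \eta$, $s = 0$, and $2\| \partial_t\eta \|_{E^*}\| \eta \|_E \leq \| \partial_t\eta \|_{E^*}^2 + \| \eta \|_E^2$ yields $\| \eta \|_{C([0,\tau];H)} \leq C\| \eta \|_W$ for smooth $\eta$; with the density statement and the completeness of $C([0,\tau];H)$ this gives (b). Finally, for general $\eta,\zeta \in W(0,\tau)$ I would approximate by $\eta_n,\zeta_n \in C^1([0,\tau];E)$ converging in $W(0,\tau)$ --- hence, by (b), also in $C([0,\tau];H)$ --- and pass to the limit in the displayed identity: the left-hand side converges uniformly in $(s,t)$ and the right-hand side converges because $\partial_t\eta_n \to \partial_t\eta$ in $L^2(0,\tau;E^*)$, $\zeta_n \to \zeta$ in $L^2(0,\tau;E)$, and symmetrically. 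The resulting identity exhibits $t \mapsto \inpt{\eta(t),\zeta(t)}$ as the indefinite integral of an $L^1(0,\tau)$ function, hence absolutely continuous with a.e.\ derivative as claimed, proving (c).

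The main obstacle I expect is part (a), specifically the strong convergence in $L^2(0,\tau;E^*)$: this is where one needs the Bochner-space compactness criterion, whose proof itself rests on the interplay of the time-translation equicontinuity, the pointwise-in-time compactness supplied by $E \hookrightarrow E^*$, and an approximation of the $L^2(0,\tau;E^*)$ norm by finitely many short time-averages. The second technical point is the density of $C^1([0,\tau];E)$ in $W(0,\tau)$ used in (b) and (c), which must be carried out so as to control the $E$-valued functions and their $E^*$-valued time derivatives simultaneously; the remaining steps are routine. One may, of course, simply invoke \cite[Theorem II.1.4]{CV02} and \cite[Proposition I.3.3]{BP86}, as the authors do.
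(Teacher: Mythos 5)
The paper gives no proof of this proposition at all: it is quoted as a known result with a pointer to \cite[Theorem II.1.4]{CV02} and \cite[Proposition I.3.3]{BP86}, so there is no in-paper argument to compare against. Your sketch reconstructs exactly the standard Lions--Aubin/Lions--Magenes argument that those references contain --- Ehrling's inequality to trade strong $L^2(0,\tau;H)$ convergence for strong $L^2(0,\tau;E^*)$ convergence, the time-translation estimate plus the Bochner-space Fr\'echet--Kolmogorov criterion for (a), and density of $C^1([0,\tau];E)$ together with the integrated product rule for (b) and (c) --- and it is essentially correct. One small repair is needed where you derive $\|\eta\|_{C([0,\tau];H)} \le C\|\eta\|_W$ for smooth $\eta$: taking $s=0$ in the identity only controls the oscillation $\|\eta(t)\|^2-\|\eta(0)\|^2$, and $\|\eta(0)\|$ is not a priori dominated by $\|\eta\|_W$. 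The standard fix is to average the identity over $s\in[0,\tau]$, or equivalently to pick $s_0$ with $\|\eta(s_0)\|^2 \le \tau^{-1}\int_0^\tau \|\eta(s)\|^2\,ds$, which is bounded via the continuous embedding $E\hookrightarrow H$ by a multiple of $\|\eta\|_{L^2(0,\tau;E)}^2$; with that change the bound, and hence parts (b) and (c), close as you describe.
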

By conducting \emph{a priori} estimates on the Galerkin approximations of the initial value problem \eqref{eveq} and the weak/weak* convergence, we shall be able to prove the local and then global existence and uniqueness of the weak solution $g(t)$ of \eqref{eveq} in the next section, also the continuous dependence of the solutions on the initial data and the regularity properties satisfied by the weak solution. Therefore, the weak solutions for all initial data in $H$ form a semiflow in the space $H$.

We refer to \cite{CV02, SY02, rT88} and many references therein for the concepts and basic facts in the theory of infinite dimensional dynamical systems, including few provided here for clarity. 

\begin{definition} \label{D:abs}
Let $\{S(t)\}_{t \geq 0}$ be a semiflow on a Banach space $X$. A bounded subset $B_0$ of $X$ is called an \emph{absorbing set in} $X$ if, for any bounded subset $B \subset X$, there is some finite time $t_0 \geq 0$ depending on $B$ such that $S(t)B \subset B_0$ for all $t > t_0$.
\end{definition}

\begin{definition} \label{D:asp}
A semiflow $\{S(t)\}_{t \geq 0}$ on a Banach space $X$ is called \emph{asymptotically compact in} $X$ if for any bounded sequences $\{x_n \}$ in $X$ and $\{t_n \} \subset (0, \infty)$ with $t_n \to \infty$, there exist subsequences $\{x_{n_k}\}$ of $\{x_n \}$ and $\{t_{n_k}\}$ of $\{t_n\}$, such that $\lim_{k \to \infty} S(t_{n_k})x_{n_k}$ exists in $X$.
\end{definition}

\begin{definition} \label{D:atr}
Let $\{S(t)\}_{t \geq 0}$ be a semiflow on a Banach space $X$. A subset $\ms{A}$ of $X$ is called a \emph{global attractor in} $X$ for this semiflow, if the following conditions are satisfied: 

(i) $\ms{A}$ is a nonempty, compact, and invariant set in the sense that 
$$
	S(t)\ms{A} = \ms{A} \quad \textup{for any} \; \;  t \geq 0. 
$$

(ii) $\ms{A}$ attracts any bounded set $B$ of $X$ in terms of the Hausdorff distance, i.e.
$$
	\text{dist} (S(t)B, \ms{A}) = \sup_{x \in B} \inf_{y \in \ms{A}} \| S(t) x - y\|_{X} \to 0, \quad \text{as} \; \, t \to \infty.
$$
\end{definition}

Here is the main result of this paper. We \emph{emphasize} that this result is established unconditionally, neither assuming initial data or solutions are nonnegative, nor imposing any restriction on any positive parameters involved in the equations \eqref{equ}--\eqref{eqs}.

\begin{theorem}[Main Theorem] \label{Mthm}
For any positive parameters $d_1, d_2, d_3, D_1, D_2, D_3, a, b, k, \gl $, and $N$, there exists a global attractor $\ms{A}$ in the phase space $H$ for the semiflow $\csg$ of the weak solutions generated by the extended Brusselator evolutionary equation \eqref{eveq}.
\end{theorem}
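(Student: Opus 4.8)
The plan is to verify the two standard hypotheses for the existence of a global attractor — that the semiflow $\csg$ admits a bounded absorbing set in $H$, and that it is asymptotically compact in $H$ — and then to invoke the classical theorem (see \cite{rT88, CV02, SY02}). Throughout I take as given the results of the previous section: that \eqref{eveq} generates a continuous semiflow $\csg$ on $H$ made of globally defined weak solutions enjoying the regularity and continuous dependence recorded there and in Proposition \ref{P1}.

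\emph{Step 1: a bounded absorbing set in $H$.} The obstruction is that $f$ fails the vector dissipativity inequality because of the autocatalytic pairs $\pm u^2v$, $\pm w^2z$ and the opposite-signed couplings $\pm D_i$; testing each equation against its own component leaves the uncontrollable terms $\int_\gw u^3v$ and $\int_\gw w^3z$. The grouping-and-rescaling remedy is to test the pair \eqref{equ}, \eqref{eqv} against $u+v$ and the pair \eqref{eqw}, \eqref{eqz} against $w+z$ — this makes the cubic terms cancel identically, leaving only the linear sinks $-ku$, $-kw$ — and, simultaneously, to test \eqref{eqv}, \eqref{eqp}, \eqref{eqz}, \eqref{eqs} against $Mv$, $M\vp$, $Mz$, $M\psi$ with a single large rescaling constant $M$. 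Summing everything: the autocatalytic remnants $-M\inpt{u^2v, v} = -M\|uv\|^2 \le 0$ and its mirror are harmless; the $D_2$- and $D_3$-couplings assemble into $-MD_2\|v-z\|^2 - MD_3\|\vp-\psi\|^2 \le 0$; the $D_1$-coupling contributes $-\frac{D_1}{2}\|u-w\|^2 + \frac{D_1}{2}\|v-z\|^2$, whose positive part is dominated by $-MD_2\|v-z\|^2$; and the cross term $-(d_2-d_1)\inpt{\nb v,\nb(u+v)}$ caused by the unequal diffusivities is absorbed into a fraction of $d_1\|\nb(u+v)\|^2$ together with a large multiple of $d_2\|\nb v\|^2$ from the $Mv$-test. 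Fixing $M$ large and applying \eqref{pcr} and Young's inequality to the remaining lower-order terms yields
$$
	\frac{d}{dt}\,\mathcal V(t) + c\,\mathcal V(t) \le C, \qquad \mathcal V = \|u+v\|^2 + M\bigl(\|v\|^2+\|\vp\|^2\bigr) + \|w+z\|^2 + M\bigl(\|z\|^2+\|\psi\|^2\bigr),
$$
with $c, C > 0$ independent of the initial data; since $\mathcal V$ is equivalent to $\|g\|_H^2$, Gronwall's inequality produces a ball $B_0 \subset H$ that absorbs every bounded subset of $H$.

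\emph{Step 2: asymptotic compactness via higher regularity.} Running the same grouping-and-rescaling estimate at the level of the $\mathbb{L}^6(\gw)$ norms — now testing \eqref{equ}, \eqref{eqv} against $|u+v|^4(u+v)$, testing \eqref{eqw}, \eqref{eqz} against $|w+z|^4(w+z)$, and testing \eqref{eqv}, \eqref{eqp}, \eqref{eqz}, \eqref{eqs} against $M|v|^4v$, $M|\vp|^4\vp$, $M|z|^4z$, $M|\psi|^4\psi$ — the cubic terms again cancel, the diffusion terms are nonnegative up to a cross term handled as before, and one reaches a differential inequality of the same type for $\|g\|_{\mathbb{L}^6}^6$, hence (together with the regularizing effect of the parabolic system) a bounded absorbing set $B_0' \subset \mathbb{L}^6(\gw)$. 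Consequently, for solutions started in a bounded set, $\|u(t)^2 v(t)\| \le \|u(t)\|_{L^6}^2\|v(t)\|_{L^6} \le C$ and likewise for the other nonlinear terms, so $f(g(t))$ is eventually bounded in $H$; writing $g(t) = e^{A(t-t_0)}g(t_0) + \int_{t_0}^t e^{A(t-s)} f(g(s))\,ds$ and using the smoothing estimate $\|e^{At}\|_{H \to E} \le C t^{-1/2} e^{-\beta t}$ of the analytic semigroup (with $\beta > 0$ by \eqref{pcr}, see \cite{SY02}), one obtains a bounded absorbing set $B_1 \subset E = [H_0^1(\gw)]^6$. Since the embedding $E \hookrightarrow H$ is compact, for any bounded $\{x_n\} \subset H$ and $t_n \to \infty$ we have $S(t_n)x_n \in B_1$ for all large $n$, so $\{S(t_n)x_n\}$ has a subsequence converging in $H$; thus $\csg$ is asymptotically compact in $H$.

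\emph{Step 3: conclusion.} With a bounded absorbing set in $H$ and asymptotic compactness in $H$, the abstract theorem furnishes the global attractor $\ms A = \bigcap_{s\ge 0}\overline{\bigcup_{t\ge s}S(t)B_0}$, which is nonempty, compact, invariant, and attracts every bounded subset of $H$ in the Hausdorff semidistance, proving Theorem \ref{Mthm}. The real difficulty is Step 1 and its $\mathbb{L}^6$-analogue in Step 2: arranging the exact cancellation of the autocatalytic terms, the correct sign of the assembled coupling terms, and the absorption of the unequal-diffusivity cross terms \emph{simultaneously for all six coupled components} and with no sign or size restriction on the parameters is precisely the structure that the vector dissipativity condition does not provide for free, and that the grouping-and-rescaling method is designed to recover.
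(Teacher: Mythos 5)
Your overall architecture --- bounded absorbing set in $H$, asymptotic compactness, then Proposition \ref{P2} --- is the same as the paper's, and you correctly identify grouping and rescaling as the essential device. But Step 1 as written has a genuine gap at the $\vp,\psi$ equations, and it is exactly the point where the paper's rescaling is not "a single large constant.'' Testing \eqref{eqp} against $M\vp$ produces the cross term $Mk\inpt{u,\vp}$, while the decay available to absorb it is only $O(1)$ in $\|u\|^2$ (the $-k\|u\|^2$ from the $u+v$ test plus the Poincar\'e decay of $\|u+v\|^2$, neither of which scales with $M$) and $O(M)$ in $\|\vp\|^2$ (namely $-M(\gl+N+\ga d_3)\|\vp\|^2$). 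The quadratic form $-c_1\|u\|^2+(N+Mk)\|u\|\,\|\vp\|-c_2M\|\vp\|^2$ with $c_1,c_2=O(1)$ is nonpositive only when $(N+Mk)^2\le 4c_1c_2M$, i.e.\ only for $M$ \emph{bounded above} by roughly $4c_1c_2/k^2$; this collides with the lower bounds $M\gtrsim |d_1-d_2|^2/(d_1d_2)$ and $M\gtrsim D_1/D_2$ that you yourself need for the diffusivity mismatch and the $D_1$-coupling overflow. For admissible parameters with $k$ large, $\gl+N$ small and $d_1,d_2$ very different, no $M$ works, so your scheme cannot deliver the unconditional statement of the theorem. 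The paper's fix is structurally different: it groups $y=u+v+w+z$ with $\xi=\vp+\psi$ and $p=u+v-w-z$ with $\theta=\vp-\psi$ (so the $D_1$-coupling either cancels in $y$ or appears as pure damping $-2D_1p$, with no cross term to dominate), and it rescales $\xi=\mu\varXi$ with the \emph{tuned} value $\mu=k/N$ --- not a large constant --- so that the $u\leftrightarrow\vp$ exchange closes into the perfect square $-k(\|y\|-\|\varXi\|)^2-\mu\gl\|\varXi\|^2\le 0$ for every choice of positive parameters. Relatedly, the term $Mb\inpt{u,v}$ from the $Mv$ test is only absorbable through the pointwise Young inequality against the autocatalytic remnant $-M\|uv\|^2$, which you dismiss as merely "harmless''; that completing-the-square step is exactly \eqref{vziq}.

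Step 2 inherits a second problem: at the $\mathbb{L}^6$ level the unequal-diffusivity cross term becomes $(d_2-d_1)\int_\gw |u+v|^4\,\nb v\cdot\nb(u+v)\,dx$, which is \emph{not} controlled by the $-Md_2\int_\gw |v|^4|\nb v|^2\,dx$ coming from the $M|v|^4v$ test --- the weights $|u+v|^4$ and $|v|^4$ do not match --- so it is not "handled as before,'' and the $ku$ source in \eqref{eqp} causes the same trouble as above. The paper never needs an $\mathbb{L}^6$ bound on all six components: it proves $L^4$/$L^6$ absorbing bounds only for $(v,z)$, where no grouping is required since $\int_\gw(buv^5-u^2v^6)\,dx\le\tfrac12 b^2\int_\gw v^4\,dx$ pointwise, and it obtains asymptotic compactness from eventual $H^1$ bounds on all components via testing against $-\gd u$ etc., the estimate $\|u^2\|^2\|v\|^2\le\ggd^2\|\nb u\|^4\|v\|^2$, and the uniform Gronwall inequality. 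Your variation-of-constants smoothing argument would be fine if the $\mathbb{L}^6$ bound were available, but since the route to that bound is the part that fails, the compactness step also needs the paper's (or an equivalent) repair.
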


The following proposition states concisely the basic result on the existence of a global attractor for a semiflow, cf. \cite{jR01, rT88, SY02}.

\begin{proposition} \label{P2}
Let $\csg$ be a semiflow on a Banach space $X$. If the following conditions are satisfied\textup{:} 

\textup{(i)} $\csg$ has an absorbing set in $X$, and 

\textup{(ii)} $\csg$ is  asymptotically compact in $X$, \\
then there exists a global attractor $\ms{A}$ in $X$ for this semiflow, which is given by
$$
    \ms{A} = \om (B_0) \overset{\textup{def}}{=} \bigcap_{\tau \geq 0} \text{Cl}_{X}  \bigcup_{t \geq \tau} (S(t)B_0).
$$
\end{proposition}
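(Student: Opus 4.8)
The plan is to take $\ms{A}:=\om(B_0)$, where $B_0$ is the absorbing set furnished by hypothesis (i), and to check the three requirements of Definition~\ref{D:atr}: nonemptiness together with compactness, invariance, and attraction. Throughout I would use the defining properties of a semiflow ($S(0)=I$, $S(t+s)=S(t)S(s)$, and the continuity of each map $S(t)\colon X\to X$), hypothesis (ii) in the form of Definition~\ref{D:asp}, and the sequential description of the $\om$-limit set that is immediate from the displayed formula for $\om(B_0)$: a point $y$ lies in $\om(B_0)$ \emph{if and only if} there exist $x_n\in B_0$ and $t_n\to\infty$ with $S(t_n)x_n\to y$ in $X$. (Note that $B_0\ne\emptyset$, since it absorbs singletons.)

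First I would settle nonemptiness and compactness. Fixing any $x_0\in B_0$ and any $t_n\to\infty$, asymptotic compactness yields a subsequence along which $S(t_{n_k})x_0$ converges, so $\om(B_0)\ne\emptyset$; and $\om(B_0)$ is closed as an intersection of closed sets. For precompactness, take $\{y_n\}\subset\om(B_0)$ and, using the characterization, choose $x_n\in B_0$ and $t_n\ge n$ with $\|S(t_n)x_n-y_n\|_X<1/n$; asymptotic compactness gives a subsequence with $S(t_{n_k})x_{n_k}\to z$, hence $y_{n_k}\to z$, and the characterization (since $t_{n_k}\to\infty$ and $x_{n_k}\in B_0$) forces $z\in\om(B_0)$. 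Thus $\ms{A}$ is compact.

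Next, invariance. Fix $t\ge0$ and $y\in\ms{A}$, say $S(t_n)x_n\to y$ with $x_n\in B_0$, $t_n\to\infty$. For $S(t)\ms{A}\subset\ms{A}$, I would use continuity of $S(t)$ and the semiflow law to write $S(t)y=\lim_n S(t)S(t_n)x_n=\lim_n S(t+t_n)x_n$, whose limit lies in $\om(B_0)$ because $t+t_n\to\infty$. For the reverse inclusion $\ms{A}\subset S(t)\ms{A}$, for $n$ large one has $t_n>t$, so $\{S(t_n-t)x_n\}$ is a sequence of the form $S(s_n)\xi_n$ with $\xi_n=x_n\in B_0$ bounded and $s_n=t_n-t\to\infty$; asymptotic compactness extracts a subsequence with $S(t_{n_k}-t)x_{n_k}\to z\in\om(B_0)=\ms{A}$, and then $S(t)z=\lim_k S(t)S(t_{n_k}-t)x_{n_k}=\lim_k S(t_{n_k})x_{n_k}=y$, so $y\in S(t)\ms{A}$. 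Hence $S(t)\ms{A}=\ms{A}$ for every $t\ge0$.

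Finally, attraction. I would first show $\mathrm{dist}(S(t)B_0,\ms{A})\to0$ as $t\to\infty$: if this failed there would be $\ve>0$, $t_n\to\infty$ and $x_n\in B_0$ with $\mathrm{dist}(S(t_n)x_n,\ms{A})\ge\ve$, yet asymptotic compactness would give a subsequence $S(t_{n_k})x_{n_k}\to y$ with $y\in\om(B_0)=\ms{A}$ by the characterization --- a contradiction. Then, for an arbitrary bounded $B\subset X$, hypothesis (i) supplies a time $t_1$ with $S(t_1)B\subset B_0$, and monotonicity of the set-distance in its first argument together with the semiflow law give $\mathrm{dist}(S(t_1+s)B,\ms{A})=\mathrm{dist}(S(s)(S(t_1)B),\ms{A})\le\mathrm{dist}(S(s)B_0,\ms{A})\to0$ as $s\to\infty$, which is precisely condition (ii). The one step that is not purely mechanical is the inclusion $\ms{A}\subset S(t)\ms{A}$ in the invariance proof, where asymptotic compactness must be invoked a second time --- on the backward-shifted orbit $S(t_n-t)x_n$ --- to manufacture a preimage, continuity of $S(t)$ then being used to identify it; without asymptotic compactness one would be left only with the one-sided relation $S(t)\ms{A}\subset\ms{A}$. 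A minor point is that in the attraction step one should prove attraction of $B_0$ itself before transferring it to a general bounded set via the absorbing property, so that the limits produced by asymptotic compactness are guaranteed to lie in $\om(B_0)$.
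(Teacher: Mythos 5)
Your proof is correct: the sequential characterization of $\om(B_0)$, the use of asymptotic compactness for nonemptiness/compactness and (twice) for invariance, and the transfer of attraction from $B_0$ to arbitrary bounded sets via the absorbing property is exactly the standard argument. The paper itself offers no proof of Proposition \ref{P2} --- it is stated as a known result with citations to \cite{jR01, rT88, SY02} --- and your argument is the one found in those references, so there is nothing to reconcile.
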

In Section 2 we shall prove the global existence of the weak solutions of the extended Brusselator evolutionary equation \eqref{eveq}. In Section 3 we show the absorbing property of this extended Brusselator semiflow. In Section 4 we shall prove the asymptotic compactness of this solution semiflow. In Section 5 we show the existence of a global attractor for this semiflow and its properties as being the $(H, E)$ global attractor and the $\mathbb{L}^\infty$ regularity. We also prove that the global attractor has finite Hausdorff dimension and fractal dimension. 

\section{\textbf{Global Existence of Weak Solutions}}

We shall write $u(t, x), v(t, x),\vp(t, x), w(t, x), z(t, x)$ and $\psi(t,x)$ simply as $u(t), v(t), \vp(t), w(t), z(t)$ and $\psi(t)$, or even as $u, v, \vp, w, z$ and $\psi$. Similarly for other functions of $(t,x)$. 

The local existence of the solution to a system of multicomponent reaction-diffusion equations, such as the initial value problem (IVP) of \eqref{eveq}, with certain regularity requirement is not a trivial issue. There are two different approaches to get a solution. One approach is the mild solutions provided by the "variation of constant formula" in terms of the associated linear semigroup $\{e^{At}\}_{t\geq 0}$ but the parabolic theory of mild solutions requires that $g_0 \in E$ instead of $g_0 \in H$ assumed here. The other approach is the weak solutions obtained through the Galerkin approximations and the Lions-Magenes type of compactness treatment, cf. \cite{CV02, jL69, jR01}. 

\begin{definition}
A function $g(t,x), (t, x) \in [0, \tau] \times \gw$, is called a weak solution to the IVP of the parabolic evolutionary equation \eqref{eveq}, if the following two conditions are satisfied:

(i) $\frac{d}{dt} (g, \eta) = (Ag, \eta) + (f(g), \eta)$ is satisfied for a.e. $t \in [0, \tau]$ and  any $\eta \in E$; 

(ii) $g(t, \cdot) \in L^2 (0, \tau; E) \cap C_w ([0, \tau]; H)$ such that $g(0) = g_0$.

\noindent
Here $(\cdot, \cdot)$ stands for the $(E^*, E)$ dual product.
\end{definition}

For reaction-diffusion systems with more general polynomial nonlinearity of higher degrees, the definition of corresponding weak solutions is made in \cite[Definition XV.3.1]{CV02}.

\begin{lemma} \label{L:locs}
For any given initial data $g_0 \in H$, there exists a unique local weak solution $g(t) = (u(t), v(t), \vp (t), w(t), z(t), \psi (t)), \, t \in [0, \tau]$ for some $\tau > 0$, of the IVP of the extended Brusselator evolutionary equation \eqref{eveq}, which satisfies 
\begin{equation} \label{soln}
    g \in C([0, \tau]; H) \cap C^1 ((0, \tau); H) \cap L^2 (0, \tau; E).
\end{equation}
\end{lemma}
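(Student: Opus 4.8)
The plan is to establish local existence and uniqueness by the standard Galerkin method combined with the Lions--Magenes compactness framework, exactly the approach alluded to after Proposition~\ref{P1}. First I would choose a Galerkin basis: since $A$ is a self-adjoint negative operator on $H$ with compact resolvent (it is block-diagonal in the Dirichlet Laplacian), its eigenfunctions $\{e_j\}$ form an orthonormal basis of $H$ that is also orthogonal in $E$. Let $P_m$ be the orthogonal projection onto $\mathrm{span}\{e_1,\dots,e_m\}$ and seek $g_m(t) = \sum_{j=1}^m \xi_j(t) e_j$ solving the finite-dimensional ODE system $\frac{dg_m}{dt} = A g_m + P_m f(g_m)$, $g_m(0) = P_m g_0$. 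Because $f$ is locally Lipschitz from $E$ to $H$ by \eqref{opF} and on the finite-dimensional space all norms are equivalent, the Picard--Lindel\"of theorem gives a unique solution $g_m$ on a maximal interval $[0,\tau_m)$.

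Second, I would derive the \emph{a priori} estimates that make $\tau_m$ uniformly bounded below and give the compactness needed to pass to the limit. Testing the Galerkin equation with $g_m$ itself (i.e. pairing in $H$) and using the structure of $f$, one controls $\frac{d}{dt}\|g_m\|^2$ in terms of $\|g_m\|^2$ plus lower-order terms; the cubic terms $u^2v$, $w^2z$ are the delicate ones, but after testing they appear in the combinations $\inpt{u^2 v, u}$ etc., and one handles them together with $\inpt{bu - u^2v, v}$ so the quartic contributions from the $(u,v)$ pair (and the $(w,z)$ pair) combine into a sign-definite or absorbable quantity; the linear coupling terms and the $\vp,\psi$ equations only contribute quadratic terms bounded via Young's inequality and \eqref{pcr}. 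This yields, on a common interval $[0,\tau]$ with $\tau = \tau(\|g_0\|) > 0$, uniform bounds for $g_m$ in $L^\infty(0,\tau;H)$ and in $L^2(0,\tau;E)$. From the equation and the embedding $E \hookrightarrow L^6$ one then bounds $\partial_t g_m$ in $L^2(0,\tau;E^*)$, so $\{g_m\}$ is bounded in the space $W(0,\tau)$ of Proposition~\ref{P1}.

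Third, I would extract a subsequence converging weakly in $L^2(0,\tau;E)$, weak-$*$ in $L^\infty(0,\tau;H)$, strongly in $L^2(0,\tau;H)$ by the compact embedding in Proposition~\ref{P1}(a), and a.e. in $(0,\tau)\times\gw$. The strong/a.e.\ convergence lets me pass to the limit in the nonlinear cubic terms (using the uniform $L^2(0,\tau;E) \hookrightarrow L^2(0,\tau;L^6)$ bound to get equiintegrability of $u_m^2 v_m$, so that $u_m^2 v_m \rightharpoonup u^2 v$ in $L^2(0,\tau;L^{3/2})$ or after a further refinement in a space paired with $E$). The limit $g$ then satisfies condition (i) of the weak-solution definition for every $\eta \in E$, and Proposition~\ref{P1}(b) upgrades $g$ to $C([0,\tau];H)$ with $g(0)=g_0$ recovered from $g_m(0)=P_m g_0 \to g_0$. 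Uniqueness and continuous dependence follow by taking two solutions $g,\tilde g$, writing the equation for the difference, pairing with $g-\tilde g$ via Proposition~\ref{P1}(c), estimating the difference of the cubic terms by the local $L^6$-bounds already available on $[0,\tau]$, and applying Gr\"onwall. Finally, the extra regularity $g \in C^1((0,\tau);H)$ comes from parabolic smoothing of the analytic semigroup $\{e^{At}\}_{t\geq 0}$: for $t>0$, $g(t) \in E$, so the mild-solution theory applies and bootstraps time-differentiability.

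The main obstacle is the treatment of the cubic autocatalytic nonlinearities: they are not monotone and not bounded $E \to H$ with a global Lipschitz constant, so the energy estimate must exploit the cancellation between $+u^2v$ in the $u$-equation and $-u^2v$ in the $v$-equation (and likewise for $w,z$) to keep the quartic terms from blowing up the estimate, and the passage to the limit in $u_m^2 v_m$ requires a careful choice of function spaces so that weak convergence in $E$ plus strong convergence in $H$ suffices. Everything else (the linear coupling, the $\vp,\psi$ equations, the $\tau>0$ lower bound, uniqueness) is routine once that point is handled.
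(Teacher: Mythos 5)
Your overall architecture --- Galerkin approximation on the Dirichlet eigenbasis, uniform bounds in $L^\infty(0,\tau;H)\cap L^2(0,\tau;E)$ together with a bound on $\partial_t g_m$ in $L^2(0,\tau;E^*)$, compactness via Proposition \ref{P1}, a.e.\ identification of the nonlinear limit, recovery of $g(0)=g_0$ by testing against functions vanishing at $t=\tau$, Gronwall for uniqueness, and parabolic smoothing for $C^1((0,\tau);H)$ --- is exactly the paper's route. The gap is in the one step that carries all of the difficulty: the \emph{a priori} estimate. You propose testing the Galerkin system with $g_m$ itself and claim that $\inpt{u^2v,u}$ combined with $\inpt{bu-u^2v,v}$ yields a sign-definite or absorbable quartic contribution. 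It does not: the sum is $\int_\gw \left(u^2v(u-v)+buv\right)dx$, whose problematic part $\int_\gw u^3v\,dx$ is controlled only by quantities like $\|\nabla u\|^3\|v\|$ (via $H_0^1\hookrightarrow L^6$ for $n=3$), a cubic power of the gradient that can be absorbed neither into the dissipation $d_1\|\nabla u\|^2$ nor into a Gronwall term depending on $\|g_m\|$ alone. So the diagonal energy estimate does not close in dimension three, and the uniform lower bound $\tau=\tau(\|g_0\|)>0$ is not obtained this way.

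The cancellation you invoke in your last paragraph ($+u^2v$ against $-u^2v$) is the right idea, but realizing it forces you to test both equations against the \emph{same} function, i.e., to estimate grouped variables rather than individual components. This is precisely what the paper does (the estimates needed for Lemma \ref{L:locs} are deferred to the proof of Lemma \ref{L:glsn}): first the $(v,z)$ estimate is closed on its own by completing the square in $-u^2v^2+buv$ as in \eqref{vziq}; then one passes to $y=u+v+w+z$ and $p=u+v-w-z$, in whose equations the cubic terms cancel identically, paired with $\xi=\vp+\psi$ and $\theta=\vp-\psi$ rescaled by $\mu=k/N$ so that the non-symmetric linear coupling ($+N\vp$ in the $u$-equation versus $+ku$ in the $\vp$-equation) contributes a non-positive quadratic form. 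Without this grouping-and-rescaling step your energy estimate fails and the proof does not go through; with it, the remainder of your argument (compactness, limit passage in the cubic terms, uniqueness, smoothing) stands essentially as the paper has it.
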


\begin{proof}
Using the orthonormal basis of eigenfunctions $\{e_j (x)\}_{j=1}^\infty$ of the Laplace operator with the homogeneous Dirichlet boundary condition:
$$
	\gd e_j + \gl_j e_j = 0 \;\; \textup{in} \;\, \gw, \quad e_j\mid_{\partial \gw} = 0,  \quad j = 1, 2, \cdots,
$$
we consider the solution 
\begin{equation} \label{gq}
	g_m (t, x) = \sum_{j=1}^m q_j^m (t) e_j (x), \quad t \in [0, \tau], \; x \in \gw,
\end{equation}
of the approximate system
\begin{equation} \label{eveqm} 
	\begin{split}
    \frac{\partial g_m}{\partial t} = A &g_m + P_m f(g_m), \quad t > 0, \\[3pt]
      g_m(0) &= P_m \, g_0 \in H_m,
      \end{split}
\end{equation}
where each $q_j^m (t)$ for $j = 1, \cdots, m$ is a 6-dimensional vector function of $t$ corresponding to the six unknowns $u, v, \vp, w, z$ and $\psi$, and $P_m: H \to H_m = \textup{Span} \{e_1, \cdots, e_m\}$ is the orthogonal projection. Note that for each given integer $m \geq 1$, \eqref{eveqm} can be written as an IVP of a system of ODEs, whose unknown is a $6m$-dimensional vector function of $t$. The components of the solution to this system of ODE are the coefficient functions in the expansion \eqref{gq} of $g_m (t,x)$, namely,
$$
	q^m (t) = \textup{col} \, (q_{ju}^m (t), q_{jv}^m (t), q_{j\vp}^m (t), q_{jw}^m (t), q_{jz}^m (t), q_{j\psi}^m (t); \, j = 1, \cdots, m). 
$$
The IVP of this system of ODE can be written as
\begin{equation} \label{odeg}
	\begin{split}
	\frac{dq^m}{dt} &= \gL_m q^m (t) + F_m (q^m (t)), \; t > 0, \\[3pt]
	q^m (0) = \textup{col} (P_m u_{0j}, \, P_m &v_{0j}, \, P_m \vp_{0j}, \, P_m w_{0j}, \, P_m z_{0j}, \, P_m \psi_{0j}; \, j = 1, \cdots, m).
	\end{split}
\end{equation}
Note that $\gL_m$ is a matrix and $F_m$ is a $6m$-dimensional vector of cubic-degree polynomials of $6m$-variables, which is a locally Lipschitz continuous vector function in $\mathbb{R}^{6m}$. Thus the solution of the system of ODE \eqref{odeg} exists uniquely on a time interval $[0, \tau]$, for some $\tau > 0$. Substituting all the components of this solution $q^m (t)$ into \eqref{gq}, we obtain the unique local solution $g_m (t, x)$ of the IVP \eqref{eveqm}, $m \geq 1$.

By the multiplier method we can conduct \emph{a priori} estimates based on 
\begin{equation*} 
	\frac{1}{2} \| g_m (t) \|_{H_m}^2 + \inpt{\boldsymbol{d} \nabla g_m (t), \nabla g_m (t)}_{H_m} = \inpt{P_m f(g_m (t)), g_m (t)}_{H_m}, \quad t \in [0, \tau],
\end{equation*}
where the matrix $\boldsymbol{d} = \textup{diag} (d_1, d_2, d_3, d_1, d_2, d_3)$. These estimates are similar to what we shall present in the next lemma. Note that $\|g_m (0)\| = \|P_m g_0 \| \leq \|g_0 \|$ for all $m \geq 1$.  It follows that (see the proof of the next lemma)
$$
	\{g_m\}_{m=1}^\infty \, \textup{is a bounded sequence in} \, L^2 (0, \tau; E) \cap L^\infty (0, \tau; H),
$$
so that 
$$
	\{Ag_m\}_{m=1}^\infty \, \textup{is a bounded sequence in} \, L^2 (0, \tau; E^*), \; \; \textup{where} \, E^* = [H^{-1} (\gw)]^6. 
$$
Since $f: E \to H$ is continuous, 
\begin{equation} \label{bdf}
	\{P_m f(g_m)\}_{m=1}^\infty \, \textup{is a bounded sequence in} \, L^2 (0, \tau; H) \subset L^2 (0, \tau; E^*).
\end{equation}
Therefore, by taking subsequences (which we will always relabel as the same as the original sequence), there exist limit functions 
\begin{equation} \label{limgf}
	g(t, \cdot) \in L^2(0, \tau; E) \cap L^\infty (0, \tau; H) \quad \textup{and} \quad \Phi (t, \cdot) \in L^2 (0, \tau; H)
\end{equation}
such that 
\begin{equation} \label{limg}
	\begin{split}
	&g_m \longrightarrow g \; \textup{weakly in} \; L^2 (0, \tau; E),  \, \textup{as} \; m \to \infty, \\
	&g_m \longrightarrow g \; \textup{weak* in} \; L^\infty (0, \tau; H),  \, \textup{as} \; m \to \infty, \\
	&Ag_m \longrightarrow Ag \; \textup{weakly in} \; L^2 (0, \tau; E^*),  \, \textup{as} \; m \to \infty,
	\end{split}
\end{equation}
and
\begin{equation} \label{limf}
	P_m f(g_m) \longrightarrow \Phi \; \textup{weakly in} \; L^2 (0, \tau; H), \, \textup{as} \; m \to \infty.
\end{equation}
To estimate the distributional time derivative sequence $\{\partial_t g_m\}_{m=1}^\infty$, we take the $(E^*, E)$ dual product of the equation \eqref{eveqm} with any $\eta \in E$ to obtain
$$
	\|\partial_t g_m (t) \|_{E^*} \leq \|A g_m (t)\|_{E^*} + c \|P_m f(g_m (t))\|_H, \quad t \in [0, \tau], \, m \geq 1,
$$
where $c$ is a universal constant, so that by the aforementioned boundedness it holds that
$$
	\{\partial_t g_m\}_{m=1}^\infty \, \textup{is a bounded sequence in} \, L^2 (0, \tau; E^*).
$$
From extracting a convergent subsequence and by the uniqueness of distributional derivative it follows that 
\begin{equation} \label{limt}
	\partial_t g_m \longrightarrow \partial_t g \; \textup{weakly in} \; L^2 (0, \tau; E^*),  \, \textup{as} \; m \to \infty.
\end{equation}

In order to show that the limit function $g$ is a weak solution to the IVP of \eqref{eveq}, we need to show $\Phi = f(g)$. By item (a) of Proposition \ref{P1}, the boundedness of $\{g_m\}$ in $L^2 (0, \tau; E)$ and $\{\partial_t g_m\}$ in $L^2 (0, \tau; E^*)$ implies that (in the sense of subsequence extraction)
\begin{equation} \label{strh}
	g_m  \longrightarrow g \; \textup{strongly in} \; L^2 (0, \tau; H),  \, \textup{as} \; m \to \infty.
\end{equation}
Consequently, there exists a subsequence such that
\begin{equation} \label{aeg}
	g_m (t, x) \longrightarrow g(t, x) \; \textup{for a.e.} \; (t, x) \in [0, \tau] \times \gw,  \, \textup{as} \; m \to \infty.
\end{equation}
Due to the continuity we have
\begin{equation} \label{aef}
	P_m f(g_m (t, x))  \longrightarrow f(g(t, x)) \; \textup{for a.e.} \; (t, x) \in [0, \tau] \times \gw,  \, \textup{as} \; m \to \infty.
\end{equation}
According to \cite[Lemma I.1.3]{jL69} or \cite[Lemma II.1.2]{CV02}, the two facts \eqref{bdf} and \eqref{aef} guarantee that 
\begin{equation} \label{limfg}
	P_m f(g_m) \longrightarrow f(g) \; \textup{weakly in} \; L^2 (0, \tau; H), \, \textup{as} \; m \to \infty.
\end{equation}
By the uniqueness, \eqref{limf} and \eqref{limfg} imply that $\Phi = f(g)$ in $L^2 (0, \tau; H)$.

With \eqref{limg}, \eqref{limt} and \eqref{limfg}, by taking limit of the integral of the weak version of \eqref{eveqm} with any given $\eta \in L^2 (0, \tau; E)$, 
$$
	\int_0^\tau \left(\frac{\partial g_m}{\partial t}, \eta\right) \, dt = \int_0^\tau \left[(Ag_m, \eta) + (P_m f(g_m), \eta)\right] dt, \;\; \textup{as} \; m \to \infty,
$$
we obtain
\begin{equation} \label{iw}
	\int_0^\tau \left(\frac{\partial g}{\partial t}, \eta\right) \, dt = \int_0^\tau \left[(Ag, \eta) + (f(g), \eta)\right] dt, \;\; \textup{for any} \; \eta \in L^2 (0, \tau; E).
\end{equation}
Let $\eta \in E$ be any constant function. Since in \eqref{iw} one can replace $[0, \tau]$ by any small time interval, it follows that
\begin{equation} \label{weq} 
	\frac{d}{dt} (g, \eta) = (Ag, \eta) + (f(g), \eta), \;  \textup{for a.e}. \; t \in [0, \tau], \;  \textup{and  for any} \;  \eta \in E.
\end{equation}

Next, $\partial_t g \in L^2 (0, \tau; E^*)$ shown in \eqref{limt} implies that $g \in C_w ([0, \tau]; E^*)$. Since the embedding $H \hookrightarrow E^*$ is continuous, this $g \in C_w ([0, \tau]; E^*)$ and $g \in L^\infty (0, \tau; H)$ shown in \eqref{limg} imply that 
$$
	g \in C_w ([0, \tau]; H),
$$
due to \cite[Theorem II.1.7 and Remark II.1.2]{CV02}. Now we show that $g(0) = g_0$. By item (c) of Proposition \ref{P1}, for any $\eta \in C^1([0, \tau]; E)$ with $\eta (\tau) = 0$, we have
$$
	\int_0^\tau \left(- g, \partial_t \eta\right) \, dt = \int_0^\tau \left[(Ag, \eta) + (f(g), \eta)\right] dt + \inpt{g(0), \eta(0)},
$$
and
$$
	\int_0^\tau \left(- g_m, \partial_t \eta\right) \, dt = \int_0^\tau \left[(Ag_m, \eta) + (P_m f(g_m), \eta)\right] dt + \inpt{P_m g_0, \eta(0)}.
$$
Take the limit of the last equality as $m \to \infty$. Since $P_m g_0 \to g_0$ in $H$, we obtain $\inpt{g(0), \eta(0)} = \inpt{g_0, \eta(0)}$ for any $\eta(0) \in E$. Finally, by the denseness of $E$ in $H$, it implies that $g(0) = g_0$ in $H$. Then by checking against Definition 4, we conclude that the limit function $g$ is a weak solution to the IVP of \eqref{eveq}.

The uniqueness of weak solution can be shown by estimating the difference of any two possible weak solutions with the same initial value $g_0$ through the variation of constant formula and the Gronwall inequality. 

By item (b) of Proposition \ref{P1}, and the fact that the weak solution $g \in W(0, \tau)$, the space defined in \eqref{wsp}, we see that $g \in C ([0, \tau]; H)$, which also infers the continuous dependence of the weak solution on $g_0$. 

Moreover, since $g \in L^2 (0, \tau; E)$, for any $t \in (0, \tau)$ there exists an earlier time $t_0 \in (0, t)$ such that $g (t_0) \in E$. Then the weak solution coincides with the strong solution expressed by the mild solution on $[t_0, \tau]$, cf. \cite{CV02, SY02}, which turns out to be continuously differentiable in time with respect to $H$-norm for $t \in (t_0, \tau)$, cf. \cite[Theorem 48.5]{SY02}. Thus we have shown $g \in C^1 ((0, \tau); H)$. Therefore, the weak solution $g$ satisfies all the properties in \eqref{soln}. The proof is completed.
\end{proof}

\begin{lemma} \label{L:glsn}
For any initial data $g_0 =(u_0, v_0, \vp_0, w_0, z_0, \psi_0) \in H$, there exists a unique global weak solution $g(t) = (u(t), v(t), \vp (t), w(t), z(t), \psi (t)), \, t \in [0, \infty)$, of the IVP of the extended Brusselator evolutionary equation \eqref{eveq} and it becomes a strong solution on the time interval $(0, \infty)$.
\end{lemma}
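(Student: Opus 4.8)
The plan is to promote the local weak solution of Lemma~\ref{L:locs} to a global one by the standard continuation argument: the maximal interval of existence has the form $[0,T_{\max})$ with $\|g(t)\|\to\infty$ as $t\uparrow T_{\max}$ whenever $T_{\max}<\infty$, so it suffices to produce an \emph{a priori} bound showing that $\|g(t)\|$ remains finite on every bounded time interval. Once that is done, the last paragraph of the proof of Lemma~\ref{L:locs}, applied on each compact subinterval of $(0,\infty)$ (for any $t>0$ there is $t_0\in(0,t)$ with $g(t_0)\in E$, since $g\in L^2(0,t;E)$), upgrades the weak solution to a strong solution on $(0,\infty)$, and uniqueness on $[0,\infty)$ is inherited from the local uniqueness already proved.

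The only genuine obstruction to such a bound is the cubic autocatalytic pair $u^2v,\,-u^2v$ (and its twin $w^2z,\,-w^2z$): testing \eqref{eveq} with $g$ in $H$ produces in $\inpt{f(g),g}$ the term $\int_\gw(u^3v-u^2v^2)\,dx+\int_\gw(w^3z-w^2z^2)\,dx$, which is sign-indefinite and, for $n=3$, is not controlled by the $E$-dissipation $\inpt{\boldsymbol{d}\nb g,\nb g}$. The grouping step removes it: because $u^2v$ enters \eqref{equ} and \eqref{eqv} with opposite signs, the sum $u+v$ obeys $\partial_t(u+v)=d_1\gd u+d_2\gd v+a-ku+D_1(w-u)+D_2(z-v)+N\vp$, which carries \emph{no} nonlinearity. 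Testing this with $u+v$ (legitimate by item (c) of Proposition~\ref{P1}, since every component of $g$ lies in $W(0,\tau)$ for each $\tau$) and bounding the forcing and the linear reaction and coupling terms by Cauchy--Schwarz and Young yields
\[
\tfrac12\tfrac{d}{dt}\|u+v\|^2+d_1\|\nb u\|^2+d_2\|\nb v\|^2+(d_1+d_2)\inpt{\nb u,\nb v}\le C\|g\|^2+C,
\]
with $C$ depending only on $|\gw|$ and the parameters; the symmetric identity holds for $w+z$, and for the linear components, testing \eqref{eqp} with $\vp$ and \eqref{eqs} with $\psi$ gives directly $\tfrac12\tfrac{d}{dt}\|\vp\|^2+d_3\|\nb\vp\|^2\le C\|g\|^2+C$ and the same for $\psi$.

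The diffusion form $d_1\|\nb u\|^2+d_2\|\nb v\|^2+(d_1+d_2)\inpt{\nb u,\nb v}$ fails to be coercive when $d_1\ne d_2$, and this is where a re-scaling is required. Testing \eqref{eqv} with $v$ gives, for a.e.\ $t$, the coercive identity $\tfrac12\tfrac{d}{dt}\|v\|^2+d_2\|\nb v\|^2+\|uv\|^2+D_2\|v\|^2=b\inpt{u,v}+D_2\inpt{z,v}\le C\|g\|^2$; adding a sufficiently large multiple $K>0$ of it — with $K$ chosen, from $d_1,d_2$ alone, so that the symmetric matrix with diagonal $(d_1,(1+K)d_2)$ and off-diagonal entries $(d_1+d_2)/2$ is positive definite, i.e.\ $(1+K)d_1d_2>(d_1+d_2)^2/4$ — and discarding the nonnegative terms $K\|uv\|^2+KD_2\|v\|^2$, one obtains $\tfrac{d}{dt}\bigl(\tfrac12\|u+v\|^2+\tfrac K2\|v\|^2\bigr)+c_0(\|\nb u\|^2+\|\nb v\|^2)\le C'\|g\|^2+C'$ for some $c_0>0$. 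Performing the same operation on $(w,z)$ and adding the two linear-component estimates, the total $\mathcal E(t)$ of these five energies is equivalent to $\|g(t)\|_H^2$ (since $\|u\|^2+\|v\|^2\le C_K(\|u+v\|^2+K\|v\|^2)$ and conversely) and satisfies $\tfrac{d}{dt}\mathcal E+c_0'\|g\|_E^2\le C_1\mathcal E+C_1$, whence in particular $\tfrac{d}{dt}\mathcal E\le C_2\mathcal E+C_2$.

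Gronwall's inequality then gives $\mathcal E(t)\le(\mathcal E(0)+C_2t)e^{C_2t}$, so $\|g(t)\|_H$ is finite on every finite interval; the local solution therefore extends to $[0,\infty)$, and by the remark in the first paragraph it is a strong solution on $(0,\infty)$. I expect the grouping/re-scaling step — converting the non-dissipative structure of the raw $L^2$ estimate into the coercive bound $\tfrac{d}{dt}\mathcal E\le C_2\mathcal E+C_2$ — to be the only nontrivial point; the treatment of the linear and coupling terms, the continuation argument, and the strong-solution upgrade are routine and already prepared by Lemma~\ref{L:locs} and Proposition~\ref{P1}.
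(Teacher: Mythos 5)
Your proof is correct, and it rests on the same essential observation as the paper's --- the cubic terms cancel when the activator and inhibitor equations are added --- but the execution is genuinely different. The paper first closes a \emph{uniform-in-time} bound on $\|v\|^2+\|z\|^2$ by completing the square in $buv-u^2v^2$, then works with the four combinations $y=u+v+w+z$, $p=u+v-w-z$, $\xi=\vp+\psi$, $\theta=\vp-\psi$, rescaling $\xi$ and $\theta$ by $\mu=k/N$ so that the cross terms $2k\|y\|\|\varXi\|$ are dominated by the damping $-k\|y\|^2-k\|\varXi\|^2$; the mismatched diffusion $(d_2-d_1)\gd(v+z)$ is treated as a forcing term controlled by the time integral of $\|\nb(v+z)\|^2$ supplied by the $(v,z)$ energy identity. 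The payoff is a bound growing only linearly in $t$, which Section 3 then sharpens into the absorbing estimate. You instead group only $u+v$ and $w+z$, repair the non-coercive form $d_1\|\nb u\|^2+d_2\|\nb v\|^2+(d_1+d_2)\inpt{\nb u,\nb v}$ by adding a large multiple of the $v$-energy (whose nonnegative terms $K\|uv\|^2+KD_2\|v\|^2$ you may safely discard), and absorb all coupling and linear reaction terms into a Gronwall constant, settling for an exponential-in-$t$ bound. That is entirely sufficient to rule out finite-time blow-up, so your argument proves the lemma with less machinery and no rescaling; what it does not deliver is the temperate (linear-in-$t$, and ultimately uniform) growth that the paper's more elaborate groupings are designed to produce and then reuse for the absorbing set in Lemma \ref{L:absb}. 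One point to keep in mind at the level of rigor shared with the paper: the energy identities should formally be derived on the Galerkin approximations and passed to the limit (or justified via Proposition \ref{P1}(c) together with the observation that $u^2v\in L^{6/5}(\gw)\hookrightarrow H^{-1}(\gw)$ for $n\le 3$), but this affects both arguments equally.
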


\begin{proof}
Taking the inner products $\inpt{\eqref{eqv}, v(t)}$ and $\inpt{\eqref{eqz}, z(t)}$ and then summing them up, we get
\begin{equation} \label{vziq}
	\begin{split}
    &\frac{1}{2} \left(\frac{d}{dt} \| v \|^2 + \frac{d}{dt} \| z \|^2 \right) + d_2 \left(\| \nabla v \|^2 + \| \nabla z \|^2\right) \\
     = &\int_{\gw} \left(-u^2 v^2 + buv - w^2 z^2 + bwz - D_2 [v^2 - 2vz + z^2] \right) dx   \\
    = &\int_{\gw} - \left((uv - b/2)^2 + (wz - b/2)^2 + D_2 (v - z)^2 \right)dx +  \frac{1}{2} b^2 |\gw |\leq \, \frac{1}{2} b^2 |\gw |. 
  \end{split}
\end{equation}
By Poincar\'{e} inequality, it follows that
\begin{equation*}
    \frac{d}{dt} \left(\| v \|^2 + \| z \|^2 \right) + 2\ga d_2 \left(\| v \|^2 + \| z \|^2 \right) \leq b^2 |\gw |,
\end{equation*}
which yields
\begin{equation} \label{vz}
    \| v(t) \|^2 + \| z (t)\|^2 \leq e^{- 2\ga d_2 t} \left(\| v_0 \|^2 + \| z_0 \|^2 \right) + \frac{b^2 |\gw |}{2\ga d_2}, \quad \text{for} \;\,  t \in [0, T_{max}).
\end{equation}

In order to treat the $(u, \vp)$-components and the $(w, \psi)$-components, we shall go through several steps. Let $y(t, x) = u(t, x) + v(t, x) + w(t,x) + z(t,x)$ and $\xi (t, x) = \vp(t, x) + \psi (t, x)$. Add up the equations \eqref{equ}, \eqref{eqv}, \eqref{eqw} and \eqref{eqz}  and add up \eqref{eqp} and \eqref{eqs}, respectively, to get the following equations satisfied by $y(t, x)$ and $\xi (t, x)$, 
\begin{align*} 
    \frac{\pdr y}{\pdr t} &= d_1 \gd y - ky + \left[(d_2 - d_1)\gd (v + z) + k(v + z) + 2a\right] + N\xi,  \\ 
    \frac{\pdr \xi}{\pdr t} &= d_3 \gd \xi + ky - k(v + z) - (\gl + N)\xi. 
\end{align*}
Rescaling $\xi = \mu \varXi$ with $\mu = k/N$, we get 
\begin{align} 
    \frac{\pdr y}{\pdr t} &= d_1 \gd y - ky + \left[(d_2 - d_1)\gd (v + z) + k(v + z) + 2a\right] + k\varXi, \label{eqy} \\ 
    \mu \frac{\pdr \varXi}{\pdr t} &= \mu d_3 \gd \varXi + ky - k(v + z) - (\mu \gl + k)\varXi. \label{eqx}
\end{align}

Now taking the inner-product $\inpt{\eqref{eqy},y(t)}$ we obtain
\begin{align*}
    \frac{1}{2}& \frac{d}{dt} \| y \|^2 + d_1 \| \nabla y \|^2 \leq \int_{\gw} \, \left[(d_2 - d_1)\gd (v+z) + (v+ z) + 2a\right]y \, dx - k \| y \|^2  + k\|y\| \|\varXi \| \\[3pt]
     \leq&\, | d_1 - d_2 | \| \nabla (v + z) \| \| \nabla y \| + \| v + z \| \| y \| + 2a | \gw |^{1/2} \| y \| - k \| y \|^2  + k\|y\| \|\varXi\| \\[3pt]
     \leq &\, \frac{d_1}{4} \| \nabla y \|^2 + \frac{|d_1 - d_2 |^2}{d_1} \| \nabla (v + z) \|^2 + \frac{d_1 \ga}{4} \| y \|^2 + \frac{2}{d_1 \ga}\| v + z \|^2 +  \frac{8}{d_1 \ga}a^2 | \gw | \\
    &- k \| y \|^2  + k\|y\| \|\varXi \| \\
     \leq &\, \frac{d_1}{2} \| \nabla y \|^2 + \frac{|d_1 - d_2 |^2}{d_1} \| \nabla (v + z) \|^2 + \frac{2}{d_1 \ga}\| v + z \|^2 +  \frac{8}{d_1 \ga}a^2 | \gw | - k \| y \|^2  + k\|y\| \|\varXi \|,
\end{align*}
so that
\begin{equation} \label{nyiq}
	\begin{split}
    \frac{d}{dt} \| y \|^2 + d_1 \| \nabla y \|^2 \leq &\, \frac{2 | d_1 - d_2 |^2}{d_1} \| \nabla (v+z) \|^2 + \frac{8}{d_1 \ga} \left(\| v\|^2 + \|z \|^2 + 2a^2 | \gw | \right) \\
    & - 2k \| y \|^2  + 2k\|y\| \|\varXi\|.
    	\end{split}
\end{equation}
Then by substituting \eqref{vz} into \eqref{nyiq} we get
\begin{equation} \label{yiq}
    \frac{d}{dt} \| y \|^2 + d_1 \| \nabla y \|^2 \leq \frac{2 | d_1 - d_2 |^2}{d_1} \| \nabla (v+z) \|^2  - 2k \| y \|^2  + 2k\|y\| \|\varXi\| + C_1(v_0, z_0, t),
\end{equation}
where
\begin{equation} \label{c1}
    C_1(v_0, z_0, t) = \frac{8}{d_1 \ga} e^{- 2\ga d_2 t} \left(\| v_0 \|^2 + \| z_0 \|^2\right) + \left(\frac{4b^2}{\ga^2 d_1 d_2} + \frac{16 a^2}{d_1 \ga} \right) |\gw |.
\end{equation}

Taking the inner-product $\inpt{\eqref{eqx},\varXi(t)}$ we obtain
\begin{align*} 
	\mu \frac{d}{dt} &\|\varXi \|^2 + 2\mu d_3 \|\nabla \varXi \|^2 \leq 2k \|v + z\| \|\varXi\| + 2k\|y\| \|\varXi \| - 2(\mu \gl + k) \| \varXi \|^2 \\
	&\leq \mu d_3 \ga \|\varXi \|^2 + \frac{k^2 (\|v \| + \|z \|)^2}{\mu d_3 \ga} + 2k\|y\| \|\varXi \| - 2(\mu \gl + k) \| \varXi \|^2,
\end{align*}
where $\mu d_3 \ga \|\varXi \|^2 \leq \mu d_3 \|\nabla \varXi \|^2$, so that
\begin{equation} \label{xiq}
	\mu \frac{d}{dt} \| \varXi \|^2 + \mu d_3 \| \nabla \varXi \|^2 \leq 2k\|y\| \|\varXi \| - 2(\mu \gl + k) \| \varXi \|^2 + C_2 (v_0, z_0, t),
\end{equation}
in which, by \eqref{vz}, 
\begin{equation} \label{c2}
	C_2 (v_0, z_0, t) = \frac{2k^2}{\mu d_3 \ga} \left[ e^{- 2\ga d_2 t} \left(\| v_0 \|^2 + \| z_0 \|^2\right) + \frac{b^2 |\gw |}{2 \ga d_2}\right]. 
\end{equation}

Then adding up \eqref{yiq} and \eqref{xiq} and noticing that 
$$
	- 2k \| y \|^2  + 4k\|y\| \|\varXi\| - 2(\mu \gl + k) \| \varXi \|^2 = - 2\left[k(\|y\| - \|\varXi\|)^2 + \mu \gl \|\varXi \|^2\right] \leq 0,
$$
we obtain
\begin{equation} \label{ypx}
	\begin{split}
	 \frac{d}{dt} (\|y\|^2 &+ \|\mu^{-1/2} \xi \|^2) + \min \{d_1, d_3 \} (\|\nabla y \|^2 + \|\mu^{-1/2} \nabla \xi \|^2) \\
	&\leq \frac{2 | d_1 - d_2 |^2}{d_1} \| \nabla (v+z) \|^2 + C_1 + C_2,
	\end{split}
\end{equation}
where $C_1$ and $C_2$ are given by \eqref{c1} and \eqref{c2}. Integration of the inequality \eqref{ypx} with the term $\min \{d_1, d_3 \} (\|\nabla y \|^2 + \|\mu^{-1/2} \nabla \xi \|^2)$ dropped yields the following estimate,
\begin{equation} \label{yx}
	\begin{split}
    \| y(t) \|^2 + \| \xi (t) \|^2 \leq &\, \frac{\max \{1, \mu^{-1}\}}{\min \{1, \mu^{-1}\}} \left(\| u_0 + v_0 + w_0 + z_0\|^2 + \| \vp_0 + \psi_0 \|^2\right) \\
    {}& + \frac{2| d_1 - d_2 |^2}{\min \{1, \mu^{-1} \}d_1} \int_{0}^{t} \| \nabla (v(s)+z(s)) \|^2 \, ds  \\[3pt]
    {} &+ C_3 \left(\| v_0 \|^2 + \| z_0 \|^2\right) + C_4 |\gw | \, t, \quad t \in [0, T_{max}),
   \end{split}
\end{equation}
where we get from \eqref{c1} and \eqref{c2} that 
$$
	C_3 = \frac{\max \{1, \mu\}}{2\ga d_2} \left(\frac{8}{d_1 \ga} +  \frac{2k^2}{\mu d_3 \ga} \right), \quad  C_4 = \max \{1, \mu \} \left(\frac{4b^2}{\ga^2 d_1 d_2} + \frac{16 a^2}{d_1 \ga} + \frac{k^2 b^2}{\mu \ga^2 d_2 d_3} \right).
$$
From \eqref{vziq}, for $t \in [0, T_{max})$ we have
\begin{equation} \label{nvz}
    d_2 \int_0^{t} \| \nabla (v(s) \pm z(s)) \|^2 \, ds \leq 2d_2 \int_0^{t} \left(\|\nabla v(s) \|^2 + \|\nabla z(s) \|^2\right)\, ds \leq \left(\| v_0 \|^2 + \|z_0 \|^2\right) + b^2 | \gw | t. 
\end{equation}
Substitute this into \eqref{yx} to obtain
\begin{equation} \label{yx1}
	\begin{split}
    \| y(t) \|^2 + \| \xi (t)\|^2 \leq &\, \frac{\max \{1, \mu^{-1}\}}{\min \{1, \mu^{-1}\}} \left(\| u_0 + v_0 + w_0 + z_0\|^2 + \| \vp_0 + \psi_0 \|^2\right) \\[3pt]
    {}& +\left( \frac{2| d_1 - d_2 |^2}{\min \{1, \mu^{-1} \}d_1 d_2} + C_3 \right)\left( \| v_0 \|^2 + \|z_0 \|^2\right)\\[3pt]
    {} &+ \left(\frac{2| d_1 - d_2 |^2 b^2}{\min \{1, \mu^{-1} \}d_1 d_2} + C_4 \right) |\gw | \,t, \quad t \in [0, T_{max}). 
  \end{split}
\end{equation}
From \eqref{vz} and \eqref{yx1} it follows that
\begin{equation} \label{pp}
	\begin{split}
   & \|u(t) + w(t) \|^2  + \|\vp (t) + \psi (t) \|^2 = \|y(t) - (v(t) + z(t))\|^2 + \|\vp (t) + \psi (t) \|^2 \\[3pt]
     \leq &\, 3 (\|y(t) \|^2 + \|v(t)\|^2 + \|z(t) \|^2) + \|\xi \|^2 \leq C_5 (\|g_0 \|^2 + |\gw | t), \quad  t \in [0, T_{max}),
   \end{split}
\end{equation}
where $C_5 > 0$ is a constant independent of $g_0$ and $t$.

On the other hand, let $p (t,x) = u(t, x) + v(t, x) - w(t,x) - z(t,x)$ and $\theta (t, x) = \vp (t, x) - \psi (t, x)$, which satisfy the equations
\begin{align*} 
    \frac{\pdr p}{\pdr t} &= d_1 \gd p - (k + 2D_1) p + (d_2 - d_1)\gd (v - z) + (k + 2(D_1 - D_2))(v - z)+ N\theta,  \\
    \frac{\pdr \theta}{\pdr t} &= d_3 \gd \theta + k p - k(v - z) - (\gl + N))\theta - 2D_3 \theta. 
\end{align*}
Rescaling $\theta = \mu \varTheta$ with the same $\mu$ as above, we get
\begin{align} 
    \frac{\pdr p}{\pdr t} &= d_1 \gd p - (k + 2D_1) p + (d_2 - d_1)\gd (v - z) + (k + 2(D_1 - D_2))(v - z)+ k\varTheta,  \label{eqpp} \\
    \mu \frac{\pdr \varTheta}{\pdr t} &= \mu d_3 \gd \varTheta + k p - k(v - z) - (\mu \gl + k))\varTheta - 2\mu D_3 \varTheta. \label{eqzt}
\end{align}

Taking the inner-product $\inpt{\eqref{eqpp}, p(t)}$ we obtain
\begin{equation} \label{npiq}
	\begin{split}
	 &\frac{1}{2} \frac{d}{dt} \|p \|^2 + d_1 \|\nabla p \|^2 + (k + 2D_1) \|p \|^2 \\[5pt]
	 \leq &\, (d_1 - d_2) \|\nabla (v - z)\| \|\nabla p \| + |k + 2(D_1 - D_2)| \|v - z\| \|p\| + k \|\varTheta \| \|p \| \\[3pt]
	 \leq &\, \frac{d_1}{2} \|\nabla p\|^2 + \frac{|d_1 - d_2|^2}{2d_1} \|\nabla (v-z)\|^2 + D_1\|p\|^2 \\[3pt]
	 &+ \frac{1}{4D_1} |k + 2(D_1 - D_2)|^2 \|v - z\|^2 + k \|\varTheta \| \|p \|,
	 \end{split}
\end{equation}
so that 
\begin{equation} \label{piq}
	\begin{split}
	\frac{d}{dt} \|p \|^2 + d_1 \|\nabla p \|^2 + D_1\|p \|^2 &\leq \frac{|d_1 - d_2|^2}{d_1} \|\nabla (v-z)\|^2 + C_6 (v_0, z_0, t) \\[3pt]
	& - 2k \| p \|^2 + 2k \|\varTheta \| \| p \|,
	\end{split}
\end{equation}
where
\begin{equation} \label{c6}
    C_6(v_0, z_0, t) = \frac{1}{D_1} |k + 2(D_1 - D_2)|^2 \left[ e^{- 2\ga d_2 t} \left(\| v_0 \|^2 + \| z_0 \|^2\right) + \frac{b^2|\gw|}{2\ga d_2} \right].
\end{equation}

Taking the inner-product $\inpt{\eqref{eqzt}, \varTheta (t)}$ we get
\begin{align*}
	\mu \frac{d}{dt} \|\varTheta \|^2 + \mu d_3 \|\nabla \varTheta \|^2 & \leq 2k \|v - z\|\|\varTheta \| + 2k \| p \| \|\varTheta \| - 2(\mu \gl + k + 2\mu D_3)\|\varTheta \|^2 \\[3pt]
	& \leq \frac{k^2 (\| v \|^2 + \| z \|^2)}{2\mu D_3} + 2k \| p \| \|\varTheta \| - 2(\mu \gl + k)\|\varTheta \|^2,
\end{align*}
so that 
\begin{equation} \label{thiq}
	\mu \left(\frac{d}{dt} \|\varTheta \|^2 + d_3 \| \nabla \varTheta \|^2 + \gl \| \varTheta \|^2\right) \leq  2k \| p \| \|\varTheta \| - 2k\|\varTheta \|^2 + C_7 (v_0, z_0, t),
\end{equation}
where
\begin{equation} \label{c7}
	C_7 (v_0, z_0, t) = \frac{k^2}{2\mu D_3} \left[e^{-2\ga d_2 t} (\|v_0\|^2 + \|z_0\|^2) +\frac{b^2 |\gw|}{2\ga d_2}\right].
\end{equation}

Summing up \eqref{piq} and \eqref{thiq} and noticing that
$$
	-2k \|p\|^2 + 4k \|\varTheta\| \|p\| - 2k \|\vartheta \|^2 = -2k (\|p\| - \|\varTheta\|)^2 \leq 0,
$$
we have
\begin{equation} \label{pth}
	\begin{split}
	&\frac{d}{dt} (\|p\|^2 + \|\mu^{-1} \theta \|^2) + \min \{d_1, d_3\} (\|\nabla p \|^2 + \|\mu^{-1} \nabla \theta \|^2) \\
	 \leq &\, \frac{|d_1 - d_2|^2}{d_1} \|\nabla (v-z)\|^2 - (D_1 \|p\|^2 + \mu^{-1} \gl \|\theta \|^2) + C_6 + C_7,
	\end{split}
\end{equation}	
where $C_6$ and $C_7$ are given by \eqref{c6} and \eqref{c7}. We drop the term $\min \{d_1, d_3\} (\|\nabla p \|^2 + \|\mu^{-1} \nabla \theta \|^2)$ from the left side and the term $- (D_1 \|p\|^2 + \mu^{-1} \gl \|\theta \|^2)$ from the right side of \eqref{pth}. Then integrate the resulting inequality and use \eqref{nvz} to get the following estimate,
\begin{equation} \label{pth1}
	\begin{split}
	\|p(t)\|^2 &+ \|\theta (t)\|^2  \leq  \frac{\max \{1, \mu^{-1}\}}{\min \{1, \mu^{-1}\}} \left(\|u_0+v_0 - w_0 - z_0\|^2 + \|\vp_0  - \psi_0 \|^2\right) \\[3pt]
	&+ \frac{1}{d_1} \max \{1, \mu\} |d_1 - d_2|^2 \int_0^t \|\nabla (v(s) - z(s) )\|^2\, ds \\[3pt]
         &+ \frac{\max \{1, \mu \}}{2\ga d_2} \left(\frac{k^2}{2\mu D_3} + \frac{1}{D_1} |k + 2(D_1 - D_2)|^2\right) \left(\|v_0\|^2 + \|z_0\|^2 + b^2 |\gw | t \right) \\[3pt]
         &\leq  \frac{\max \{1, \mu^{-1}\}}{\min \{1, \mu^{-1}\}} \left(\|u_0+v_0 - w_0 - z_0\|^2 + \|\vp_0  - \psi_0 \|^2\right) \\[3pt]
         &+ C_8 \left(\|v_0\|^2 + \|z_0\|^2 + b^2 |\gw | t \right), \; t \in [0, T_{max}), 
	\end{split}
\end{equation}
where
$$
	C_8 = \frac{\max \{1, \mu \}|d_1 - d_2|^2}{d_1d_2}  + \frac{\max \{1, \mu \}}{2\ga d_2} \left(\frac{k^2}{2\mu D_3} + \frac{1}{D_1} |k + 2(D_1 - D_2)|^2\right).
$$

From \eqref{vz} and \eqref{pth1} it follows that
\begin{equation} \label{mm}
	\begin{split}
   	& \|u(t) - w(t) \|^2 + \|\vp (t) - \psi (t)\|^2 = \| p(t) - (v(t) - z(t)) \|^2 + \|\vp (t) - \psi (t)\|^2 \\[3pt]
    	\leq &\, 3\left(\|p(t)\|^2 + \|v(t) \|^2 + \|z(t)\|^2\right) + \|\theta (t)\|^2 \leq C_9 (\|g_0\|^2 + |\gw | t) \quad  t \in [0, T_{max}),
   	\end{split}
\end{equation}
where $C_9 > 0$ is a constant independent of initial data $g_0$ and time $t$.

Finally, from \eqref{vz}, \eqref{pp} and \eqref{mm} we conclude that for any initial data $g_0 \in H$, all the six components of the weak solution $g(t) = g(t; g_0)$, i.e. $v(t), z(t)$, and
\begin{equation} \label{4com}
	\begin{split}
	u(t)& = \frac{1}{2} [(u(t) + w(t)) + (u(t) - w(t))], \quad w(t) = \frac{1}{2} [(u(t) + w(t)) - (u(t) - w(t))], \\
	\vp (t) & = \frac{1}{2} [(\vp(t) + \psi (t)) + (\vp(t) - \psi(t))], \quad \psi(t) = \frac{1}{2} [(\vp(t) + \psi(t)) - (\vp(t) - \psi(t))],
	\end{split}
\end{equation}
are uniformly bounded in $[0, T_{max})$ if $T_{max}$ is finite. Therefore, for any $g_0 \in H$, the weak solution $g(t)$ of \eqref{eveq} will never blow up in $H$ at any finite time. It shows that for any $g_0 \in H$, $T_{max} = \infty$, and the weak solution exists globally. Moreover, as shown in the last paragraph of the proof of Lemma \ref{L:locs}, any weak solution turns out to be a strong solution on the time interval $(0, \infty)$. The proof is completed.
\end{proof}

\section{\textbf{Absorbing Properties}}

By the global existence and uniqueness of the weak solutions and their continuous dependence on initial data shown in Lemma \ref{L:locs} and Lemma \ref{L:glsn}, the family of all the global weak solutions $\{g(t; g_0): t \geq 0, g_0 \in H \}$ defines a semiflow on $H$,
$$
	S(t): g_0 \mapsto g(t; g_0),  \quad g_0 \in H, \; t \geq 0,
$$
which will be called the \emph{extended Brusselator semiflow} associated with the extended Brusselator evolutionary equation \eqref{eveq}.

\begin{lemma} \label{L:absb}
There exists a constant $K_1 > 0$, such that the set 
\begin{equation} \label{bk}
    	B_0 = \left\{g \in H : \| g \|^2 \leq K_1 \right\} 
\end{equation}
is an absorbing set in $H$ for the extended Brusselator semiflow $\{S(t)\}_{ t \geq 0}$.
\end{lemma}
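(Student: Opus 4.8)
The plan is to sharpen the energy estimates carried out in the proof of Lemma \ref{L:glsn} so that the linear-in-$t$ growth appearing there is replaced by uniform, eventually absorbing bounds. The starting point is that \eqref{vz} already provides an absorbing-type estimate for the pair $(v,z)$: for $t$ large, $\|v(t)\|^2 + \|z(t)\|^2$ lies below $\frac{b^2 |\gw|}{2\ga d_2} + 1$, the transient $e^{-2\ga d_2 t}(\|v_0\|^2 + \|z_0\|^2)$ becoming negligible after a time depending only on $\|g_0\|$. The real work is to upgrade the estimates \eqref{ypx} and \eqref{pth} for the grouped-and-rescaled variables $y = u+v+w+z$, $\xi = \vp+\psi$, $p = u+v-w-z$, $\theta = \vp-\psi$, where in Lemma \ref{L:glsn} the dissipative gradient terms were dropped and the term $\|\nabla(v\pm z)\|^2$ — for which only the integral bound \eqref{nvz} is available — forced growth of order $|\gw|\,t$.

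First I would retain the dissipative terms in \eqref{ypx} and absorb $\frac{2|d_1-d_2|^2}{d_1}\|\nabla(v+z)\|^2 \le \frac{4|d_1-d_2|^2}{d_1}(\|\nabla v\|^2 + \|\nabla z\|^2)$ into a large multiple of the inequality \eqref{vziq}. Concretely, fix $\gl_0 > \frac{2|d_1-d_2|^2}{d_1 d_2}$ and introduce the Lyapunov functional
\[
	\mathcal{L}(t) = \|y(t)\|^2 + \|\mu^{-1/2}\xi(t)\|^2 + \gl_0\left(\|v(t)\|^2 + \|z(t)\|^2\right), \qquad \mu = k/N .
\]
Adding $2\gl_0$ times \eqref{vziq} to \eqref{ypx}, the $\|\nabla(v+z)\|^2$ term is dominated (with room to spare) by the $2\gl_0 d_2(\|\nabla v\|^2 + \|\nabla z\|^2)$ term, leaving
\[
	\frac{d}{dt}\mathcal{L} + \min\{d_1,d_3\}\left(\|\nabla y\|^2 + \|\mu^{-1/2}\nabla\xi\|^2\right) + \gl_1\left(\|\nabla v\|^2 + \|\nabla z\|^2\right) \le C_1 + C_2 + \gl_0 b^2 |\gw|
\]
for some $\gl_1 > 0$, where $C_1, C_2$ from \eqref{c1}, \eqref{c2} are each a fixed constant plus a term decaying like $e^{-2\ga d_2 t}(\|v_0\|^2+\|z_0\|^2)$. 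Applying the Poincar\'e inequality \eqref{pcr} to the left-hand side yields a differential inequality of the form $\frac{d}{dt}\mathcal{L} + \ggd_0\,\mathcal{L} \le \widetilde{C} + \widetilde{C}\,e^{-2\ga d_2 t}\|g_0\|^2$ with $\ggd_0 > 0$ and $\widetilde{C}$ independent of $g_0$. Gronwall's lemma then gives $\limsup_{t\to\infty}\mathcal{L}(t) \le \widetilde{C}/\ggd_0$, with the transient controlled by $\|g_0\|$; in particular $\|y(t)\|^2 + \|\xi(t)\|^2$ is eventually bounded by a constant independent of the initial data. I would run the identical argument for $(p,\theta)$ starting from \eqref{pth} instead of \eqref{ypx}, where the extra coercive term $-(D_1\|p\|^2 + \mu^{-1}\gl\|\theta\|^2)$ only helps and the same combination with a multiple of \eqref{vziq} removes the $\|\nabla(v-z)\|^2$ term, producing an eventual bound for $\|p(t)\|^2 + \|\theta(t)\|^2$ by a constant independent of $g_0$.

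Finally, using the reconstruction $u+w = y-(v+z)$, $u-w = p-(v-z)$, $\vp+\psi = \xi$, $\vp-\psi = \theta$, together with the decomposition \eqref{4com} and the bound \eqref{vz}, all six components $u,v,\vp,w,z,\psi$ are seen to be eventually bounded in $L^2(\gw)$ by constants independent of $g_0$. Summing, there is $K_1 > 0$ with $\limsup_{t\to\infty}\|S(t)g_0\|^2 \le K_1$ for every $g_0 \in H$; and for any bounded $B \subset H$ the entering time $t_0$ can be taken uniform over $B$, since every transient is of the form $(\textup{const})\,e^{-2\ga d_2 t}\sup_{g_0\in B}\|g_0\|^2$. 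That is precisely the absorbing property of $B_0 = \{g \in H : \|g\|^2 \le K_1\}$.

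The main obstacle is the structural one flagged in the introduction: the opposite-signed autocatalytic terms $\pm u^2 v$, $\pm w^2 z$ and the opposite-signed couplings $\pm D_i(\cdot)$ block any direct \emph{a priori} bound on an individual component, so one must work in the grouped-and-rescaled variables. Within that scheme, the genuinely delicate point here — unlike in Lemma \ref{L:glsn} — is the term $\|\nabla(v\pm z)\|^2$, which has no pointwise-in-time bound; folding a sufficiently large multiple of the $(v,z)$ dissipation inequality \eqref{vziq} into the Lyapunov functional, rather than discarding the gradient terms and integrating, is exactly what converts the $O(|\gw|t)$ growth into a uniform bound and hence delivers the absorbing set.
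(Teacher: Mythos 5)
Your proof is correct, and it follows the same overall skeleton as the paper's: the $(v,z)$ bound \eqref{vz} first, then the grouped and rescaled pairs $(y,\xi)$ and $(p,\theta)$ via \eqref{ypx} and \eqref{pth}, and finally reconstruction of the six components through \eqref{4com}. Where you genuinely diverge is in the one delicate step, the treatment of the forcing term $\|\nabla(v\pm z)\|^2$. The paper keeps this term on the right-hand side, multiplies \eqref{ypx} by the integrating factor $e^{\ga d t}$, and controls the resulting weighted integral $\int_0^t e^{-\ga d(t-\tau)}\|\nabla(v+z)(\tau)\|^2\,d\tau$ by an integration-by-parts estimate on \eqref{vziq} (the inequality \eqref{evz}), which introduces the auxiliary function $\beta(t)$ and a case distinction according to whether $d_1 = 2d_2$. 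You instead fold a sufficiently large multiple $\gl_0 > 2|d_1-d_2|^2/(d_1 d_2)$ of the $(v,z)$ energy inequality \eqref{vziq} into a single Lyapunov functional, so that the gradient dissipation $2\gl_0 d_2(\|\nabla v\|^2+\|\nabla z\|^2)$ dominates the offending term pointwise in time, and then a single application of Poincar\'e and Gronwall closes the argument. Your route is arguably cleaner: it avoids \eqref{evz}, the function $\beta(t)$, and the $d_1 = 2d_2$ case analysis entirely, at the modest cost of tracking the extra coefficient $\gl_0$ in the functional (and of noting, as you should when writing this up, that the decay rate of the transient becomes $\min\{\ggd_0, 2\ga d_2\}$ rather than $2\ga d_2$ itself). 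The paper's route keeps the estimates strictly component-group by component-group, which makes the explicit constants $R_0, R_1, R_2$ and hence $K_1 = 7R_0+3(R_1+R_2)$ easier to read off. Both yield a $K_1$ independent of the initial data and an entering time uniform over bounded sets, which is what the lemma requires.
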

\begin{proof}
For the components $v(t)$ and $z(t)$ of this semiflow $\{S(t)\}_{ t \geq 0}$, from \eqref{vz} we obtain
\begin{equation} \label{vzsup}
    	\limsup_{t \to \infty} \, (\| v(t) \|^2 + \|z(t)\|^2) < R_0 = \frac{b^2 |\gw |}{\ga d_2},
\end{equation}
and for any given bounded set $B \subset H$ there exists a finite time $T_B^{v,z} \geq 0$ such that $\|v(t)\|^2 + \|z(t)\|^2 < R_0$ for any $g_0 \in B$ and all $t > T_B^{v,z}$. For any $t \geq 0$, \eqref{vziq} implies that
\begin{equation} \label{vztt}
	\begin{split}
    	\int_{t}^{t+1}& (\| \nabla v(s) \|^2 + \|\nabla z(s)\|^2) \, ds \leq \frac{1}{d_2} (\| v(t) \|^2 + \|z(t)\|^2 + b^2 |\gw |) \\[3pt]
    	&\leq \frac{1}{d_2} \left[ e^{- 2\ga d_2 t} (\| v_0 \|^2 + \|z_0\|^2) + \left(1 + \frac{1}{2\ga d_2}\right)b^2 |\gw |\right], 
	\end{split}
\end{equation}
which is for later use.

Let $d = \min \{d_1, d_3\}$. From \eqref{ypx} and by Poincar\'{e} inequality we can deduce that
\begin{equation} \label{eyxiq}
	\begin{split}
    &\frac{d}{dt} \left( e^{\ga d t} (\| y(t) \|^2 + \|\mu^{-1/2}  \xi (t) \|^2) \right) \\
    \leq &\, \frac{2| d_1 - d_2 |^2}{d_1} \, e^{\ga dt} \| \nabla (v(t) + z(t)) \|^2 + e^{\ga d t} (C_1(v_0, z_0, t) + C_2 (v_0, z_0, t)), \quad t > 0.
    	\end{split}
\end{equation}
Integrate \eqref{eyxiq} to obtain
\begin{equation} \label{eyx}
	\begin{split}
    \| y(t) \|^2 + \| \xi (t) \|^2 \leq &\, e^{-\ga dt}  \frac{\max \{1, \mu^{-1}\}}{\min \{1, \mu^{-1}\}} (\| u_0 + v_0 +w_0+z_0 \|^2 + \|\vp_0 + \psi_0 \|^2) \\[5pt]
    &+\frac{\max \{1, \mu\} 2| d_1 - d_2 |^2}{d_1} \int_{0}^{t} e^{- \ga d(t -\tau)} \| \nabla (v(\tau) + z(\tau))\|^2 \, d\tau  \\[5pt]
    &+ C_{10} (v_0, z_0, t), \quad t > 0,
    	\end{split}
\end{equation}
where by \eqref{c1} and \eqref{c2} we have
\begin{align*} 
    & C_{10} (v_0, z_0, t) = \max \{1, \mu\} \left[\frac{8}{d_1 \ga} e^{-\ga d t}  \int_{0}^{t}  e^{\ga (d_1 - 2 d_2)\tau}\, d\tau \, (\| v_0 \|^2 + \|z_0\|^2)  \right. \\[5pt]
    & \left. + \frac{1}{\ga d} \left(\frac{4b^2}{\ga d_1 d_2} + \frac{16 a^2}{\ga d_1} \right) |\gw | + \frac{2k^2}{\mu \ga d_3}e^{-\ga d t}  \int_{0}^{t}  e^{\ga (d_1 - 2 d_2)\tau}\, d\tau \, (\| v_0 \|^2 + \|z_0\|^2) + \frac{k^2 b^2}{\mu \ga^3 d d_2 d_3} |\gw | \right] \\[5pt]
    &= \max \{1, \mu\} \left[\left(\frac{8}{d_1 \ga} + \frac{2k^2}{\mu \ga d_3} \right) \beta (t) (\| v_0 \|^2 + \|z_0\|^2) + \left(\frac{4b^2}{\ga^3 d d_1 d_2} + \frac{16 a^2}{\ga^2 d d_1} +\frac{k^2 b^2}{\mu \ga^3 d d_2 d_3}\right) |\gw | \right],
\end{align*}
in which
\begin{equation} \label{apht}
    \beta (t) = e^{-\ga dt}  \int_{0}^{t} e^{\ga(d_1 - 2 d_2)\tau}\, d\tau = 
    \begin{cases}
        \frac{1}{|\ga (d_1 - 2d_2) |}\left| e^{- 2\ga d_2 t} - e^{-\ga dt} \right|,  & \text{if  $d_1 \ne 2d_2$,} \\[10pt]
        t e^{-\ga dt} \leq \frac{2}{\ga d} e^{-1} e^{-\ga dt/2}, & \text{if $d_1 = 2d_2$.} 
    \end{cases}
\end{equation}
Note that $\beta (t) \to 0$, as $t \to \infty$. On the other hand, multiplying \eqref{vziq} by $e^{\ga dt}$ and then integrating each term of the resulting inequality, we get
$$
    \frac{1}{2} \int_{0}^{t} \, e^{\ga d\tau} \frac{d}{d\tau} \left(\| v(\tau) \|^2 + \|z(\tau)\|^2 \right) \, d\tau + d_2 \int_{0}^{t} \, e^{\ga d\tau} (\| \nabla v(\tau) \|^2 + \|\nabla z(\tau)\|^2) \, d\tau \leq \frac{b^2}{2\ga d} | \gw | e^{\ga dt}.
$$
By integration by parts and using \eqref{vz}, we find that
\begin{equation} \label{evz}
	\begin{split}
    & d_2 \int_{0}^t \, e^{\ga d\tau} (\| \nabla v(\tau) \|^2 + \| \nabla z(\tau) \|^2 )\, d\tau \leq \frac{b^2}{2\ga d} \, | \gw |\, e^{\ga  d \, t} \\
    & - \frac{1}{2} \left[e^{\ga dt} (\| v(t) \|^2 + \| z(t) \|^2 ) - (\| v_0 \|^2 + \|z_0\|^2) - \int_{0}^{t} \, \ga d \, e^{\ga d\tau} (\| v(\tau) \|^2 + \|z(\tau)\|^2) \, d\tau \right]   \\
     \leq &\, \frac{b^2}{\ga d} |\, \gw | \, e^{\ga d\, t} + (\| v_0 \|^2 + \|z_0\|^2) +  \ga d \int_{0}^{t} e^{\ga(d_1 - 2d_2)\tau} (\| v_0 \|^2 + \|z_0\|^2)\, d\tau + \frac{b^2 |\gw |}{2\ga d_2} e^{\ga d\, t}   \\
     \leq &\, \frac{1}{\ga} \left(\frac{1}{d} + \frac{1}{d_2} \right) b^2 |\gw | e^{\ga d\, t} + \left(1 + \ga d \, \beta (t)  e^{\ga d\, t} \right) (\| v_0 \|^2 + \|z_0\|^2), \quad  t \geq 0. 
    	\end{split}
\end{equation}
Substituting \eqref{evz} into \eqref{eyx} we obtain that, for $t \geq 0$,

\begin{equation} \label{yxf}
	\begin{split}
    \| &y(t) \|^2 + \|\xi (t)\|^2  \leq e^{-\ga dt}  \frac{\max \{1, \mu^{-1}\}}{\min \{1, \mu^{-1}\}} (\| u_0 + v_0 +w_0+z_0\|^2 + \| \vp_0 + \psi_0 \|^2)+ C_{10} (v_0, z_0, t)   \\[5pt]
     &+ \frac{2\max \{1, \mu\}| d_1 - d_2 |^2}{d_1\, d_2} e^{-\ga dt} \left[ \frac{1}{\ga}\left(\frac{1}{d} + \frac{1}{d_2} \right) b^2 |\gw | e^{\ga dt} + \left(1 + \ga d \, \beta (t) e^{\ga dt}\right) (\| v_0 \|^2+ \|z_0\|^2)\right]   \\[3pt]
     &  = e^{-\ga dt}  \frac{\max \{1, \mu^{-1}\}}{\min \{1, \mu^{-1}\}} (\| u_0 + v_0 +w_0+z_0\|^2 + \| \vp_0 + \psi_0 \|^2)   \\[3pt]
     & +\max \{1, \mu\} \left[\left(\frac{8}{d_1 \ga} + \frac{2k^2}{\mu \ga d_3} \right) \beta (t) (\| v_0 \|^2 + \|z_0\|^2) + \left(\frac{4b^2}{\ga^3 d d_1 d_2} + \frac{16 a^2}{\ga^2 d d_1} +\frac{k^2 b^2}{\mu \ga^3 d d_2 d_3}\right) |\gw | \right] \\[3pt]
     &+ \frac{2\max \{1, \mu\}| d_1 - d_2 |^2}{d_1\, d_2} \left[ \frac{1}{\ga}\left(\frac{1}{d} + \frac{1}{d_2} \right) b^2 |\gw | + \left(e^{-\ga dt} + \ga d \, \beta (t)\right) (\| v_0 \|^2+ \|z_0\|^2)\right].
     	\end{split}
\end{equation}
Since $\beta (t) \rightarrow 0$ and $e^{- \ga dt} \rightarrow 0$ as $t \to \infty$, from \eqref{yxf} we come up with
\begin{equation} \label{yxsup}
    \limsup_{t \to \infty} \, (\|y(t) \|^2 + \|\xi (t)\|^2) < R_1,
\end{equation}
where
\begin{equation} \label{r1}
	R_1 = 1 +\max \{1, \mu\} \left(\frac{4b^2}{\ga^3 d d_1 d_2} + \frac{16 a^2}{\ga^2 d d_1} +\frac{k^2 b^2}{\mu \ga^3 d d_2 d_3} + \frac{2| d_1 - d_2 |^2}{\ga d_1\, d_2}\left(\frac{1}{d} + \frac{1}{d_2} \right) b^2 \right) |\gw |.
\end{equation}
Moreover, for any given bounded set $B \subset H$ there exists a finite time $T_B^{y,\xi} \geq 0$ such that $\|y(t)\|^2 + \|\xi (t)\|^2 < R_1$ for any $g_0 \in B$ and all $t > T_B^{y,\xi}$. The combination of \eqref{vzsup} and \eqref{yxsup} gives rise to
\begin{equation} \label{psup}
	\begin{split}
    	\limsup_{t \to \infty} \,& (\| u(t) + w(t) \|^2 + \| \vp (t) + \psi (t)\|^2) \\
	& =  \limsup_{t \to \infty} \, \| y(t) - (v(t) + z(t))\|^2 + \| \vp (t) + \psi (t)\|^2) \\ 
	& \leq \limsup_{t \to \infty} \left(3 (\|y(t)\|^2 + \|v(t)\|^2 + \|z(t)\|^2) + \|\xi \|^2 \right) < 3(R_0 + R_1).
    	\end{split}
\end{equation}

From the inequality \eqref{pth} satisfied by $p (t) = u(t) + v(t) - w(t) -z(t)$ and $\theta (t) = \vp (t) - \psi (t)$ and by Poincar\'{e} inequality, similarly we get
\begin{equation} \label{epthiq}
	\begin{split}
	\frac{d}{dt} \left( e^{\ga dt} (\|p(t) \|^2 + \|\mu^{-1} \theta(t) \|^2\right) \leq &\, \frac{| d_1 - d_2 |^2}{d_1} \, e^{\ga dt} \| \nabla (v(t) - z(t)) \|^2 \\
	& + e^{\ga dt} (C_6(v_0, z_0, t) + C_7(v_0, z_0, t)), \quad t > 0.
		\end{split}
\end{equation}
Integrate \eqref{epthiq} to obtain
\begin{equation} \label{epth}
	\begin{split}
    &\| p(t) \|^2 + \|\theta (t)\|^2 \leq e^{-\ga dt}  \frac{\max \{1, \mu^{-1}\}}{\min \{1, \mu^{-1}\}} (\| u_0 + v_0 - w_0 - z_0 \|^2 + \|\vp_0 - \psi_0 \|^2) \\
    &+ \max \{1, \mu\} \left(\frac{| d_1 - d_2 |^2}{d_1} \, \int_{0}^{t} e^{- \ga d(t -\tau)} \| \nabla (v(\tau) - z(\tau))\|^2 \, d\tau + C_{11} (v_0, z_0, t)\right),
    	\end{split}
\end{equation}
where
\begin{equation} \label{c11}
    C_{11} (v_0, z_0, t) = \left(\frac{1}{D_1} |k + 2(D_1 - D_2)|^2 + \frac{k^2}{2\mu D_3}\right)\left(  \beta (t) (\| v_0 \|^2 + \|z_0\|^2)+ \frac{b^2}{2\ga^2 d d_2} |\gw |\right). \\
\end{equation}
Using \eqref{evz} to treat the integral term in \eqref{epth}, we obtain 
\begin{equation} \label{pthif}
	\begin{split}
   & \| p(t) \|^2 + \|\theta (t)\|^2 \leq e^{-\ga dt} \frac{\max \{1, \mu^{-1}\}}{\min \{1, \mu^{-1}\}} (\| u_0 + v_0 - w_0 - z_0\|^2 + \|\vp_0 - \psi_0 \|^2) \\
     &+ \max \{1, \mu\} \left(\frac{2| d_1 - d_2 |^2}{d_1} \int_0^t e^{- \ga d(t -\tau)} (\| \nabla v(\tau) \|^2 + \|\nabla z(\tau)\|^2) \, d\tau  + C_{11} (v_0, z_0, t) \right)\\
     & \leq e^{-\ga dt} \frac{\max \{1, \mu^{-1}\}}{\min \{1, \mu^{-1}\}} (\| u_0 + v_0 - w_0 - z_0\|^2 + \|\vp_0 - \psi_0 \|^2)  + \max \{1, \mu \} C_{11} (v_0, z_0, t) \\
     & + \max \{1, \mu\} \frac{2| d_1 - d_2 |^2}{d_1 d_2}\left[ \frac{1}{\ga} \left(\frac{1}{d} + \frac{1}{d_2} \right) b^2 |\gw |  + \left(e^{-\ga dt} + \ga d \, \beta (t)\right) (\| v_0 \|^2 + \|z_0\|^2) \right].
         	\end{split}
\end{equation}
Again, since $\beta (t) \rightarrow 0$ and $e^{- \ga dt} \rightarrow 0$ as $t \to 0$, from \eqref{c11} and \eqref{pthif} we get
\begin{equation} \label{pthsup}
    \limsup_{t \to \infty} \, (\| p(t) \|^2 + \|\theta (t)\|^2) < R_2,
\end{equation}
where
\begin{equation} \label{r2}
	R_2 = 1 + \max \{1, \mu\} \left[\frac{2|d_1 - d_2|^2}{\ga d_1 d_2} \left(\frac{1}{d} + \frac{1}{d_2}\right) + \frac{1}{2\ga^2 d d_2} \left(\frac{|k + 2(D_1 - D_2)|^2}{D_1} + \frac{k^2}{2\mu D_3}\right)\right] b^2 |\gw |.
\end{equation}
Moreover, for any given bounded set $B \subset H$ there exists a finite time $T_B^{p,\theta} \geq 0$ such that $\|p(t)\|^2 + \|\theta (t)\|^2 < R_2$ for any $g_0 \in B$ and all $t > T_B^{p,\theta}$. The combination of \eqref{vzsup} and \eqref{pthsup} gives rise to
\begin{equation} \label{msup}
	\begin{split}
    &\limsup_{t \to \infty} \,(\| u(t) - w(t) \|^2 + \|\vp(t) - \psi(t)\|^2) \\[2pt]
    = & \limsup_{t \to \infty} \, (\|p (t) - (v(t) - z(t))\|^2 + \|\vp(t) - \psi(t)\|^2) \\[2pt]
     \leq &\limsup_{t \to \infty} \left[3 (\|p(t)\|^2 + \|v(t)\|^2 + \|z(t)\|^2) + \|\theta (t)\|^2 \right] < 3 (R_0 + R_2).
    	\end{split}
\end{equation}

Finally, putting together \eqref{vzsup}, \eqref{psup} and \eqref{msup} and noticing \eqref{4com} we reach the conclusion that 
\begin{equation} \label{gsup}
	\begin{split}
	\limsup_{t \to \infty}& \,\|g(t)\|^2 = \limsup_{t \to \infty} \, \left[\frac{1}{4} \| (u + w) + (u - w)\|^2 + \frac{1}{4} \| (u + w) - (u - w)\|^2 \right. \\
	 &\left. + \frac{1}{4} \| (\vp + \psi) + (\vp - \psi)\|^2 + \frac{1}{4} \| (\vp + \psi) - (\vp - \psi)\|^2 + \|v\|^2 + \|z \|^2\right] \\
	 & \leq \limsup_{t \to \infty} \, \left(\|u + w\|^2 + \|u-w\|^2 + \|\vp + \psi\|^2 + \|\vp - \psi\|^2 + \|v\|^2 + \|z \|^2 \right) \\
	 & < 7R_0 + 3(R_1 + R_2),
	\end{split}
\end{equation}
and that for any given bounded set $B \subset H$ there exists a finite time 
$$
	T_B = \max \{T_B^{v,z}, T_B^{y,\xi}, T_B^{p,\theta}\} \geq 0
$$ 
such that $\|g (t)\|^2 < K_1 =  7R_0 + 3(R_1 + R_2)$ for any $g_0 \in B$ and all $t > T_B$. Therefore the lemma is proved with $K_1 =  7R_0 + 3(R_1 + R_2)$ in the description of an absorbing set $B_0$ in \eqref{bk} and $K_1$ is a constant independent of initial data.
\end{proof}

Next we show the absorbing properties of the $(v, z)$ components of the extended Brusselator semiflow in the product Banach spaces $[L^{2p} (\gw)]^2$, for any integer $1 \leq p \leq 3$. 

\begin{lemma} \label{L:absbp}
	For any given integer $1 \leq p \leq 3$, there exists a positive constant $K_p$ such that the absorbing inequality
\begin{equation} \label{lsupp}
	\limsup_{t \to \infty} \, \|(v(t), z(t))\|_{L^{2p}}^{2p} < K_p
\end{equation}
is satisfied by the $(v, z)$ components of the extended Brusselator semiflow $\{S(t)\}_{t \geq 0}$ for any initial data $g_0 \in H$. Moreover, for any bounded set $B \subset H$ there is a finite time $T_B^p > 0$ such that $\|(v(t), z(t))\|_{L^{2p}}^{2p} < K_p$ for any $g_0 \in B$ and all $t > T_B^p$.
\end{lemma}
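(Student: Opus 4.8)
The plan is to test the $v$- and $z$-equations against odd powers of $v$ and $z$ and to reduce everything to a scalar absorbing inequality for
$$
X_p(t) = \|v(t)\|_{L^{2p}}^{2p} + \|z(t)\|_{L^{2p}}^{2p} = \|v^p(t)\|^2 + \|z^p(t)\|^2 ,
$$
arguing by induction on $p$, with the base case $p=1$ already supplied by \eqref{vzsup} in Lemma \ref{L:absb} (so $K_1 = R_0$). For $p \ge 2$ I would form the inner products $\inpt{\eqref{eqv}, v^{2p-1}(t)}$ and $\inpt{\eqref{eqz}, z^{2p-1}(t)}$ and add them; this is legitimate for every $t > 0$ since, by Lemma \ref{L:glsn}, the weak solution is then a strong solution with $v(t), z(t) \in H^2(\gw) \cap H_0^1(\gw) \hookrightarrow \mathbb{L}^\infty(\gw)$ for $n \le 3$, so $v^p, z^p \in H_0^1(\gw)$. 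Integration by parts turns the diffusion terms into $-\frac{d_2(2p-1)}{p^2}\big(\|\nabla(v^p)\|^2 + \|\nabla(z^p)\|^2\big)$.

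The three remaining contributions are disposed of by the structure of the system rather than by a dissipativity hypothesis. Completing the square,
$$
bu v^{2p-1} - u^2 v^{2p} = -\Big(u v^{p} - \tfrac{b}{2} v^{p-1}\Big)^2 + \tfrac{b^2}{4}\, v^{2p-2} \le \tfrac{b^2}{4}\, v^{2p-2},
$$
so the sign-indefinite term $bu v^{2p-1}$ is absorbed by the autocatalytic dissipation $-u^2 v^{2p}$ with \emph{no} a priori bound on $u$ (and likewise for the $(w,z)$ pair), while the linear coupling recombines as $D_2\int_\gw (z-v)\big(v^{2p-1} - z^{2p-1}\big)\,dx \le 0$ because $s \mapsto s^{2p-1}$ is increasing. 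This yields
$$
\frac{1}{2p}\frac{d}{dt}X_p + \frac{d_2(2p-1)}{p^2}\big(\|\nabla(v^p)\|^2 + \|\nabla(z^p)\|^2\big) \le \frac{b^2}{4}\Big(\|v\|_{L^{2(p-1)}}^{2(p-1)} + \|z\|_{L^{2(p-1)}}^{2(p-1)}\Big),
$$
and by the induction hypothesis the right-hand side is $< \frac{b^2}{4}K_{p-1}$ for all $t > T_B^{p-1}$, uniformly for $g_0$ in a fixed bounded set $B \subset H$.

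To close the inequality I would first apply the Poincar\'{e} inequality \eqref{pcr} to $v^p, z^p \in H_0^1(\gw)$, giving $\|\nabla(v^p)\|^2 \ge \ga\|v^p\|^2$ and hence $\frac{d}{dt}X_p + c_p X_p \le \frac{b^2 p}{2}K_{p-1}$ with $c_p = \frac{2\ga d_2(2p-1)}{p} > 0$; Gronwall then produces $\limsup_{t\to\infty} X_p(t) \le \frac{b^2 p}{2 c_p}K_{p-1}$, which defines $K_p$ and gives \eqref{lsupp}. For the uniform absorbing time the extra difficulty is that a general $g_0 \in H$ need not lie in $\mathbb{L}^{2p}(\gw)$, so $X_p(t)$, although finite for every $t>0$, is not uniformly controlled near $t=0$ over $g_0 \in B$. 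I would handle this by upgrading Poincar\'{e} to a Gagliardo--Nirenberg estimate $\|v^p\| \le C\|\nabla(v^p)\|^{a}\,\|v^p\|_{L^{2(p-1)/p}}^{1-a}$ with $a \in (0,1)$ (valid for $n \le 3$ and $2 \le p \le 3$, where $2(p-1)/p \ge 1$), together with $\|v^p\|_{L^{2(p-1)/p}} = \|v\|_{L^{2(p-1)}}^{p}$, which is bounded for $t > T_B^{p-1}$ by the induction hypothesis. This converts the dissipation into a superlinear term $\ge \widetilde c_p X_p^{\,1+\delta}$, $\delta = \tfrac1a - 1 > 0$, so that $\frac{1}{2p}\frac{d}{dt}X_p + \widetilde c_p X_p^{1+\delta} \le \frac{b^2}{4}K_{p-1}$ on $(T_B^{p-1},\infty)$. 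A differential inequality of this type traps $X_p$ below an explicit level $R_p$ and drives \emph{any} finite value at a starting time into $[0,R_p]$ within a delay $\tau_p$ depending only on $\widetilde c_p, \delta, K_{p-1}$ and not on $g_0$; taking $K_p = R_p + 1$ and $T_B^p = T_B^{p-1} + 1 + \tau_p$ completes the induction. (Alternatively, one may bound $X_p$ at a single fixed time $t_1 > 0$, uniformly over bounded $B$, from the standard parabolic smoothing $H \to \Pi \hookrightarrow \mathbb{L}^\infty(\gw)$ for strong solutions, and then invoke plain Gronwall from $t_1$.)

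The principal obstacle is exactly the one flagged in the introduction: there is no condition $f(s)\cdot s \le C$, and the opposite-signed terms $\pm u^2 v$, $\pm D_2 v$ forbid a naive component-by-component estimate. The resolution above is to keep the autocatalytic pair $bu v^{2p-1} - u^2 v^{2p}$ together and complete the square, so that no estimate on $u$ or $w$ is ever needed, and to keep the coupling pair together and discard it by monotonicity of odd powers; these are the $L^{2p}$-level analogues of the grouping already used in Lemmas \ref{L:glsn} and \ref{L:absb}. The only secondary technicality, that the initial data lie merely in $H$, is dealt with by the superlinear (or smoothing) argument; the rest is routine Gronwall bookkeeping, and one should track the constants $K_p$ only through their dependence on the parameters, not their precise values.
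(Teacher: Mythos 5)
Your proof is correct and follows essentially the same route as the paper: test \eqref{eqv} and \eqref{eqz} against odd powers $v^{2p-1}$, $z^{2p-1}$, keep the pair $buv^{2p-1}-u^2v^{2p}$ together so that the autocatalytic term absorbs the sign-indefinite one with no bound on $u$ or $w$ (the paper uses Young's inequality, $buv^{2p-1}\le \tfrac12 u^2v^{2p}+\tfrac12 b^2 v^{2p-2}$, rather than your completed square; same effect), discard the coupling integral (the paper again via Young rather than monotonicity of $s\mapsto s^{2p-1}$), and chain the resulting inequality down from $L^{2p}$ to $L^{2(p-1)}$ to $L^2$ by Gronwall — exactly the paper's explicit two-step cascade \eqref{6vz}--\eqref{4vz} written as an induction. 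The one genuine difference is how the uniformity over bounded $B\subset H$ is secured: the paper invokes parabolic smoothing to assume without loss of generality that $g_0\in\mathbb{L}^6(\gw)$ (indeed $\mathbb{L}^8(\gw)$) and lets the initial-data terms decay exponentially, which is your parenthetical alternative; your primary route instead upgrades Poincar\'{e} to a Gagliardo--Nirenberg bound producing a superlinear dissipation $\widetilde c_p X_p^{1+\delta}$, so that any finite value of $X_p$ is driven below a fixed level within a delay independent of $g_0$. Your variant is, if anything, the more careful way to obtain an absorbing time depending only on $B$, since the paper's constants $C_{12}$, $C_{13}$ still multiply $\sn{v_0}+\sn{z_0}$ and the uniformity there rests implicitly on the smoothing step; both arguments are sound.
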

\begin{proof}
The case $p = 1$ has been shown in Lemma \ref{L:absb}. According to the solution property \eqref{soln} with $T_{max} = \infty$ for all solutions, for any given initial status $g_0 \in H$ there exists a time $t_0 \in (0, 1)$ such that
\begin{equation} \label{tog}
	S(t_0) g_0  \in E = [H_0^1 (\gw)]^6 \hookrightarrow \mathbb{L}^6 (\gw) \hookrightarrow \mathbb{L}^4 (\gw).
\end{equation}
Then the weak solution $g(t) = S(t)g_0$ becomes a strong solution on $[t_0, \infty)$ and satisfies
\begin{equation} \label{esf}
	S(\cdot) g_0 \in C([t_0, \infty); E) \cap L^2 (t_0, \infty; \Pi) \subset C([t_0, \infty); \mathbb{L}^6 (\gw)) \subset C([t_0, \infty); \mathbb{L}^4 (\gw)), 
\end{equation}
for $n \leq 3$. Based on this observation, without loss of generarity, we can simply \emph{assume} that $g_0 \in \mathbb{L}^6 (\gw)$ for the purpose of studying the long-time dynamics. Thus parabolic regularity \eqref{esf} of strong solutions implies the $S(t)g_0 \in E \subset \mathbb{L}^6 (\gw), t \geq 0$. Then by the bootstrap argument, again without loss of generality, one can \emph{assume} that $g_0 \in \Pi \subset \mathbb{L}^8 (\gw)$ so that $S(t)g_0 \in \Pi \subset \mathbb{L}^8 (\gw), t \geq 0$.

Take the $L^2$ inner-product $\inpt{\eqref{eqv},v^5}$ and $\inpt{\eqref{eqz},z^5}$ and sum them up to obtain
\begin{equation} \label{vz6}
	\begin{split}
	&\frac{1}{6} \frac{d}{dt} \left(\sn{v(t)} + \sn{z(t)}\right) + 5d_2 \left(\|v(t)^2 \nabla v(t)\|^2 + \|z(t)^2 \nabla z(t)\|^2\right) \\
	= &\, \int_{\gw} \left(bu(t,x)v^5 (t,x) - u^2(t,x)v^6(t,x) + bw(t,x)z^5(t,x) - w^2(t, x) z^6(t,x)\right) dx\\
	& + D_2 \int_{\gw} \left[(z(t,x) - v(t,x))v^5(t,x) + (v(t,x) - z(t,x))z^5(t,x)\right] dx.
	\end{split}
\end{equation}
By Young's inequality, we have
\begin{equation*}
	\int_{\gw} \left[\left(buv^5 - u^2v^6\right) + \left(bwz^5 - w^2z^6\right)\right] dx \leq \frac{1}{2} \left(\int_{\gw} b^2 (v^4 + z^4)\, dx - \int_{\gw} (u^2 v^6 + w^2 z^6)\, dx \right),
\end{equation*}
and
\begin{equation*}
	 \int_{\gw} \left[(z - v)v^5 + (v - z)z^5\right] dx \leq  \int_{\gw}  \left[- v^6 + \left(\frac{1}{6} z^6 + \frac{5}{6} v^6 dx\right) + \left(\frac{1}{6} v^6 + \frac{5}{6} z^6 \right) - z^6 \right] dx = 0.
\end{equation*}
Substitute the above two inequalities into \eqref{vz6} and use Poincar\'{e} inequality, we get the following inequality relating $\sn{(v, z)}$ to $\fn{(v, z)}$,
\begin{equation}  \label{6vz}
	\begin{split}
	&\frac{d}{dt} \left(\sn{v(t)} + \sn{z(t)}\right) + \frac{10}{3}\ga d_2 \left(\sn{v(t)} + \sn{z(t)} \right) \\
	 \leq &\, \frac{d}{dt} \left(\sn{v(t)} + \sn{z(t)}\right) + \frac{10}{3} d_2 \left(\|\nabla v^3 (t)\|^2 + \| \nabla z^3(t)\|^2\right) \leq 3b^2 (\fn{v(t)} + \fn{z(t)}).
	\end{split}
\end{equation}
Similarly by taking the $L^2$ inner-product $\inpt{\eqref{eqv},v^3}$ and $\inpt{\eqref{eqz},z^3}$ we can get the corresponding inequality relating $\fn{(v, z)}$ to $\|(v, z)\|^2$,
\begin{equation}  \label{4vz}
	\begin{split}
	&\frac{d}{dt} \left(\fn{v(t)} + \fn{z(t)}\right) + 3\ga d_2 \left(\fn{v(t)} + \fn{z(t)} \right) \\
	 \leq &\, \frac{d}{dt} \left(\fn{v(t)} + \fn{z(t)}\right) + 3 d_2 \left(\|\nabla v^2 (t)\|^2 + \| \nabla z^2(t)\|^2\right) \leq 2b^2 (\|v(t)\|^2 + \|z(t)\|^2).
	\end{split}
\end{equation}
Applying Gronwall inequality to inequalities \eqref{4vz}, \eqref{6vz}, and using \eqref{vz}, we get
\begin{align*}
	& \fn{v(t)} + \fn{z(t)} \\
	\leq &\, e^{-3\ga d_2 t} \left(\fn{v_0} + \fn{z_0}\right) + \int_0^t e^{-3\ga d_2 (t-\tau)} 2b^2 (\|v(\tau)\|^2 + \|z(\tau)\|^2)d\tau \\
	\leq &\, e^{-3\ga d_2 t} \left(\fn{v_0} + \fn{z_0}\right) + \int_{\gw} e^{-3\ga d_2(t-\tau)-2\ga d_2 \tau}2b^2(\|v_0\|^2 + \|z_0\|^2)\, d\tau + \frac{b^4 |\gw |}{3\ga^2 d_2^2} \\
	 \leq &\, e^{- \ga d_2 t} C_{12} \left(\sn{v_0} + \sn{z_0}\right) + \frac{b^4 |\gw |}{3\ga^2 d_2^2}, \quad t \geq 0,
\end{align*}
where $C_{12}$ is a positive constant, and then
\begin{align*}
	&\sn{v(t)} + \sn{z(t)} \\
	\leq &\, e^{- (10/3) \ga d_2 t} \left(\sn{v_0} + \sn{z_0}\right) + \int_0^t e^{- (10/3) \ga d_2 (t-\tau)} 3b^2 (\fn{v(\tau)} + \fn{z(\tau)})d\tau \\
	\leq &\, e^{- 3 \ga d_2 t} \left(\sn{v_0} + \sn{z_0}\right) + \int_{\gw} e^{- 3 \ga d_2(t-\tau)-2\ga d_2 \tau}3b^2 C_{12}(\sn{v_0} + \sn{z_0})\, d\tau + \frac{3b^6 |\gw |}{20\ga^3 d_2^3} \\
	\leq &\, e^{-\ga d_2 t} C_{13} \left(\sn{v_0} + \sn{z_0}\right) + \frac{3b^6 |\gw |}{20\ga^3 d_2^3}, \quad t \geq 0,
\end{align*}
where $C_{13}$ is a positive constant. It follows that
\begin{align} 
	\limsup_{t \to \infty} \, &\left(\fn{v(t)} + \fn{z(t)}\right) < K_2 = 1 + \frac{b^4 |\gw |}{3\ga^2 d_2^2},  \label{vz4sup} \\
	\limsup_{t \to \infty} \, &\left(\sn{v(t)} + \sn{z(t)}\right) < K_3 = 1 + \frac{3b^6 |\gw |}{20\ga^3 d_2^3}. \label{vz6sup} 
\end{align}
Thus \eqref{lsupp} is proved and the last statement of this lemma is also proved.
\end{proof}

\section{\textbf{Asymptotic Compactness}}

In this section, we show that the extended Brusselator semiflow $\{S(t)\}_{t\geq 0}$ is asymptotically compact through the following two lemmas. Since $H_0^1 (\gw) \hookrightarrow L^4 (\gw)$ and $H_0^1 (\gw) \hookrightarrow L^6 (\gw)$ are continuous embeddings, there are constants $\ggd > 0$ and $\eta > 0$ such that $\| \cdot \|_{L^4}^2 \leq \ggd \|\nabla (\cdot)\|^2$ and $\| \cdot \|_{L^6}^2 \leq \eta \|\nabla (\cdot)\|^2$. We shall use the notation $\|(y_1, y_2)\|^2 = \|y_1\|^2 + \|y_2\|^2$ for conciseness.

\begin{lemma} \label{L:uwc}
	For any given initial data $g_0 \in B_0$, the $(u, w)$ components of the solution trajectories $g(t) = S(t)g_0$ of the IVP \eqref{eveq} satisfy 
\begin{equation} \label{uwq}
	\|\nabla (u(t), w(t)) \|^2 \leq Q_1, \quad \textup{for} \; \;  t > T_1,
\end{equation}
where $Q_1 > 0$ is a constant depending only on $K_1$ and $|\gw |$ but independent of initial data, and $T_1 > 0$ is finite and only depends on the absorbing ball $B_0$.
\end{lemma}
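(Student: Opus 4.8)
The plan is to derive a differential inequality for $Y(t) = \|\nabla u(t)\|^2 + \|\nabla w(t)\|^2$ of the form $Y'(t) \le g(t)\,Y(t) + h(t)$ and close it by a standard uniform Gronwall argument (cf. \cite{rT88}), using the $L^2$ bounds of Lemma \ref{L:absb}, the $L^6$ bounds of Lemma \ref{L:absbp}, and a uniform $L^1_{loc}$-in-time bound on $Y$ extracted from the grouping/rescaling estimates already in hand. First I would take the inner products $\inpt{\eqref{equ}, -\gd u}$ and $\inpt{\eqref{eqw}, -\gd w}$ and add them; using $(b+k)\inpt{u,\gd u} = -(b+k)\|\nabla u\|^2 \le 0$ and Young's inequality on each remaining product against $\|\gd u\|, \|\gd w\|$, one reaches
\begin{equation*}
	\tfrac12 Y'(t) + \tfrac{d_1}{2}\big(\|\gd u\|^2 + \|\gd w\|^2\big) \le \tfrac{c}{d_1}\big(\|u^2 v\|^2 + \|w^2 z\|^2\big) + \tfrac{c}{d_1}\big(a^2|\gw| + D_1^2\|u-w\|^2 + N^2(\|\vp\|^2 + \|\psi\|^2)\big).
\end{equation*}
Every term in the last parenthesis is eventually bounded by a constant depending only on $K_1$ and $|\gw|$, since $u-w$, $\vp$, $\psi$ are recovered from the combined variables whose $L^2$ norms are controlled in Lemma \ref{L:absb}. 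For the cubic terms I would use the generalized H\"older inequality with $H_0^1(\gw)\hookrightarrow L^6(\gw)$, namely $\|u^2 v\|^2 \le \|u\|_{L^6}^4\|v\|_{L^6}^2 \le \eta^2\|\nabla u\|^4\,\|v\|_{L^6}^2$ and symmetrically for $w,z$; by Lemma \ref{L:absbp}, $\|v(t)\|_{L^6}^2$ and $\|z(t)\|_{L^6}^2$ are below $K_3^{1/3}$ for all $t$ past a finite time depending only on $B_0$. Dropping the dissipative term then gives $Y'(t) \le g(t)Y(t) + h(t)$ with $g(t) = C\eta^2 K_3^{1/3}\,Y(t)$ and $h(t)$ a constant depending only on $K_1$ and $|\gw|$.

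The crucial preliminary is the uniform bound $\int_t^{t+1} Y(s)\,ds \le Q_0$ for all large $t$, with $Q_0$ depending only on $B_0$. I would not obtain this from the $u$- and $w$-equations directly (the term $\inpt{u^2 v, u}$ is awkward) but from the grouping/rescaling inequalities: integrating \eqref{ypx} over $[t,t+1]$ and discarding its dissipative left-hand term bounds $\int_t^{t+1}\|\nabla y(s)\|^2\,ds$ once $\|y(t)\|^2$ is known bounded (by \eqref{yxsup}, or for $g_0\in B_0$ by \eqref{yxf}) and $\int_t^{t+1}\|\nabla(v+z)\|^2\,ds$ is bounded (by \eqref{vztt}); integrating \eqref{pth} likewise bounds $\int_t^{t+1}\|\nabla p(s)\|^2\,ds$. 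Since $\nabla(u+w) = \nabla y - \nabla(v+z)$ and $\nabla(u-w) = \nabla p - \nabla(v-z)$, and $\int_t^{t+1}\|\nabla(v\pm z)\|^2\,ds$ is again controlled by \eqref{vztt}, adding up yields $\int_t^{t+1} Y(s)\,ds \le \tfrac12\int_t^{t+1}\big(\|\nabla(u+w)\|^2 + \|\nabla(u-w)\|^2\big)\,ds \le Q_0$.

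Finally, applying the uniform Gronwall lemma to $Y' \le gY + h$: since $\int_t^{t+1} g(s)\,ds \le C\eta^2 K_3^{1/3}Q_0$, $\int_t^{t+1} h(s)\,ds$ is a fixed constant, and $\int_t^{t+1} Y(s)\,ds \le Q_0$ for all $t$ beyond a finite $T_1$ determined only by $B_0$, one concludes $Y(t) \le Q_1$ for all $t > T_1$ (after a harmless renaming $T_1+1 \mapsto T_1$), where $Q_1$ depends only on $K_1$, $K_3$, $|\gw|$ and the fixed parameters, hence is independent of the initial data; this is \eqref{uwq}. The main obstacle is the \emph{superlinearity}: the cubic reaction term forces a $Y^2$ contribution into the right-hand side of the $H^1$ energy estimate, so a naive Gronwall argument would give only local-in-time control. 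The remedy is exactly the combination of the $L^1_{loc}$-in-time bound on $Y$ — available only through the grouping/rescaling estimates \eqref{ypx}, \eqref{pth} — with the $L^6$ absorbing estimate of Lemma \ref{L:absbp}, without which the quartic factor $\|\nabla u\|^4\|v\|_{L^6}^2$ could not be tamed. Keeping every constant manifestly independent of $g_0$ throughout is the bookkeeping to watch.
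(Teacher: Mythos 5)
Your proposal is correct and follows essentially the same route as the paper: the same multipliers $-\Delta u$, $-\Delta w$ leading to a differential inequality of the form $\rho' \le C\rho^{2} + h$ for $\rho = \|\nabla(u,w)\|^{2}$, the same uniform bound on $\int_t^{t+1}\rho(\tau)\,d\tau$ extracted from the grouping variables $y$ and $p$ together with \eqref{vztt}, and the same uniform Gronwall closure. The only deviation is in how the cubic term is peeled off --- you use $\|u\|_{L^6}^{4}\|v\|_{L^6}^{2}$ with the $L^6$ absorbing bound of Lemma \ref{L:absbp}, while the paper uses $\|u\|_{L^4}^{4}\|v\|^{2}$ with the $L^2$ bound $K_1$ --- and both yield the same quadratic structure in $\rho$.
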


\begin{proof}
	Take the inner-products $\inpt{\eqref{equ}, -\gd u(t)}$ and $\inpt{\eqref{eqw}, -\gd w(t)}$ and then sum up the two equalities to obtain
	
\begin{align*}
	&\frac{1}{2} \frac{d}{dt} \|\nabla(u, w)\|^2 + d_1 \|\gd (u,w)\|^2 + (b+k)\|\nabla(u, w)\|^2 \\
	 = &\, - \int_\gw a (\gd u +\gd w)\, dx - \int_\gw (u^2 v \gd u + w^2 z \gd w)\, dx - N \int_\gw(\vp \gd u +\psi \gd w)\, dx \\
	&\, - D_1 \int_\gw  (|\nabla u |^2 - 2 \nabla u \cdot \nabla w + |\nabla w |^2 )\, dx \\
	 \leq &\left(\frac{d_1}{4} + \frac{d_1}{4} + \frac{d_1}{2}\right) \|\gd (u, w)\|^2 + \frac{a^2}{d_1} |\gw | + \frac{N^2}{d_1} \|(\vp, \psi )\|^2 + \frac{1}{2d_1} \int_\gw \left(u^4 v^2 + w^4 v^2 \right) dx.
\end{align*}
It follows that
\begin{equation} \label{nuw}
	\begin{split}
	&\frac{d}{dt} \|\nabla(u, w)\|^2 + 2(b+k)\|\nabla(u, w)\|^2 \\
	\leq &\, \frac{2}{d_1} \left(a^2 |\gw | + N^2 \|(\vp, \psi) \|^2\right) + \frac{1}{d_1} \left(\|u^2\|^2 \| v\|^2 + \|w^2\|^2 \| z\|^2 \right) \\
	\leq &\, \frac{2}{d_1} \left(a^2 |\gw | + N^2 \|(\vp, \psi) \|^2\right) + \frac{\ggd^2}{d_1} \left(\|v\|^2 \|\nabla u \|^4 + \|z\|^2 \|\nabla w \|^4 \right), \quad t > 0.
	\end{split}
\end{equation}
By the absorbing property shown in Lemma \ref{L:absb}, there is a finite time $T_0 = T_0 (B_0) \geq 0$ such that $S(t) B_0 \subset B_0$ for all $t > T_0$. Therefore, for any $g_0 \in B_0$, by \eqref{bk} we have 
\begin{equation} \label{vzvp}
	\| (u(t), w(t))\|^2 + \| (v(t), z(t))\|^2 + \|(\vp(t), \psi (t))\|^2 \leq K_1, \quad \textup{for} \;\; t > T_0.
\end{equation}
Substitute \eqref{vzvp} into \eqref{nuw} to obtain
\begin{equation} \label{nuwiq}
	\begin{split}
	\frac{d}{dt} \|\nabla (u, w)\|^2 &\leq \frac{d}{dt} \|\nabla (u, w)\|^2 + 2(b+k)\|\nabla(u, w)\|^2 \\[3pt]
	& \leq \frac{\ggd^2 K_1}{d_1}\|\nabla (u, w)\|^4 +  \frac{2}{d_1} \left(a^2 |\gw | + N^2 K_1\right), \quad t > T_0,
	\end{split}
\end{equation}
which can be written as the inequality
\begin{equation} \label{ugi}
	\frac{d \rho}{dt} \leq \alpha \rho + \frac{2}{d_1} \left(a^2 |\gw | + N^2 K_1\right), \quad t > T_0,
\end{equation}
where 
$$
	\rho (t) = \|\nabla (u(t), w(t))\|^2 \quad \textup{and} \quad \alpha (t) = \frac{\ggd^2 K_1}{d_1} \rho (t).
$$

In view of the inequality \eqref{nyiq}, \eqref{vztt} and \eqref{vzvp}, we have
\begin{equation} \label{ytt}
	\begin{split}
	&\int_t^{t+1} \|\nabla y(\tau)\|^2 \, d\tau \leq \frac{2|d_1 - d_2|^2}{d_1^2} \int_t^{t+1} \|\nabla (v + z)\|^2 d\tau + \frac{2k}{d_1} \int_t^{t+1} \|\Xi (\tau)\|^2 d\tau \\
	& + \frac{1}{d_1} \left(\|y(t)\|^2 + \int_t^{t+1} \frac{8}{\ga} (\|v(\tau)\|^2 +\|z(\tau)\|^2 + 2a^2 |\gw |)\, d\tau \right) \leq C_{14}, \;\; \textup{for} \;\;t > T_0, 
	\end{split}
\end{equation}
where
$$
	C_{14} = \frac{4|d_1 - d_2|^2}{d_1^2 d_2}\left[K_1 + \left(1 + \frac{1}{2\ga d_2}\right)b^2 |\gw |\right] + \frac{1}{d_1} \left((4k\mu^{-1} + 1) K_1 + \frac{8}{\ga}(K_1 + 2a^2 |\gw|)\right).
$$
From the inequality \eqref{npiq}, \eqref{vztt} and \eqref{vzvp} and with a similar estimation, there exists a constant $C_{15} > 0$ such that 
\begin{equation} \label{ptt}
	\int_t^{t+1} \|\nabla p(\tau)\|^2 \,d\tau \leq C_{15}, \quad \textup{for} \;\; t > T_0.
\end{equation}
According to \eqref{4com}, we can put together \eqref{vztt}, \eqref{ytt} and \eqref{ptt} to get
\begin{equation} \label{rtt}
	\begin{split}
	&\int_t^{t+1} \rho (\tau) \, d\tau  = \int_t^{t+1} (\|\nabla u(\tau)\|^2 + \|\nabla w(\tau)\|^2)\, d\tau \\
	\leq &\, \frac{1}{2} \int_t^{t+1} \left(\|\nabla (y(\tau) - (v(\tau) + z(\tau))\|^2 + \|\nabla (p(\tau) - (v(\tau) - z(\tau))\|^2 \right) d\tau \\
	\leq &\, \int_t^{t+1} \left(\|\nabla y(\tau)\|^2 + \|\nabla p(\tau)\|^2 + \|\nabla (v + z)\|^2 + \|\nabla (v - z)\|^2\right) d\tau \\
	\leq &\, C_{14} + C_{15} + \frac{4}{d_2} \left[K_1 + \left(1 + \frac{1}{2\ga d_2}\right)b^2 |\gw|\right] \overset{\textup{def}}{=} C_{16}, \quad \textup{for} \; \; t > T_0.
	\end{split}
\end{equation}
Now we can apply the uniform Gronwall inequality, cf. \cite[Lemma D.3]{SY02}, to \eqref{ugi} and use \eqref{rtt} to reach the conclusion \eqref{uwq} with
$$
	Q_1 = \left(C_{16} +  \frac{2}{d_1} \left(a^2 |\gw | + N^2 K_1\right)\right) e^{\ggd^2 K_1C_{16}/d_1}
$$
and $T_1 = T_0 (B_0) + 1$. The proof is completed.
\end{proof}

\begin{lemma} \label{L:vzvpc}
	For any given initial data $g_0 \in B_0$, the $(v, z)$ components and $(\vp, \psi)$ components of the solution trajectories $g(t) = S(t)g_0$ of the IVP \eqref{eveq} satisfy 
\begin{equation} \label{q2}
	\|\nabla (v(t), z(t)) \|^2 + \|\nabla (\vp(t), \psi(t)) \|^2 \leq Q_2, \quad \textup{for} \; \;  t > T_2,  
\end{equation}
where $Q_2 > 0$ is a constants depending on $K_1$ and $|\gw |$ but independent of initial data, and $T_2\, (> T_1 > 0)$ is finite and only depends on the absorbing ball $B_0$.
\end{lemma}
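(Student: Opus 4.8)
The plan is to run the $H^1_0$-level energy estimates analogous to the one that produced Lemma \ref{L:uwc}, now testing \eqref{eqv}, \eqref{eqz} against $-\gd v$, $-\gd z$ and \eqref{eqp}, \eqref{eqs} against $-\gd \vp$, $-\gd \psi$. Since the $(v,z)$ subsystem does not involve $(\vp,\psi)$, and the $(\vp,\psi)$ subsystem involves $(u,w)$ only through the \emph{linear} terms $ku$, $kw$, I would treat the two pairs by two independent computations and then add the resulting bounds. Throughout, $g_0 \in B_0$ and $t$ is taken large enough that the absorbing bound \eqref{vzvp} of Lemma \ref{L:absb} holds (i.e. $t > T_0$); the inner products with $-\gd$ are justified as usual, either on the Galerkin approximations as in Lemma \ref{L:locs} or via the parabolic regularity $g(t)\in \Pi$ of the strong solution for $t>0$.

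For the $(\vp,\psi)$ pair (the easier warm-up), summing $\inpt{\eqref{eqp},-\gd\vp}$ and $\inpt{\eqref{eqs},-\gd\psi}$, integrating by parts, and noting that the $D_3$-coupling contributes $-D_3\|\nabla(\vp-\psi)\|^2 \le 0$, one gets
\[
\tfrac{1}{2}\tfrac{d}{dt}\|\nabla(\vp,\psi)\|^2 + d_3\|\gd(\vp,\psi)\|^2 + (\gl+N)\|\nabla(\vp,\psi)\|^2 \le k\,\|(u,w)\|\,\|\gd(\vp,\psi)\| .
\]
Young's inequality together with $\|(u,w)\|^2 \le K_1$ absorbs the right side into $\tfrac{d_3}{2}\|\gd(\vp,\psi)\|^2$ plus a constant multiple of $K_1$; then $\|\gd\xi\|^2\ge\ga\|\nabla\xi\|^2$ yields an inequality of the form $\tfrac{d}{dt}\|\nabla(\vp,\psi)\|^2 + 2(\gl+N)\|\nabla(\vp,\psi)\|^2 \le C$, and the bound $\int_t^{t+1}\|\nabla(\vp,\psi)\|^2\,d\tau \le C'$ obtained from the $H$-level estimate of \eqref{eqp}--\eqref{eqs} (together with $\|(u,w)\|^2, \|(\vp,\psi)\|^2 \le K_1$) lets me apply the uniform Gronwall inequality exactly as in Lemma \ref{L:uwc} to conclude $\|\nabla(\vp(t),\psi(t))\|^2 \le Q_2'$ for $t > T_0+1$, with $Q_2'$ depending only on $K_1$ and $|\gw|$.

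For the $(v,z)$ pair, summing $\inpt{\eqref{eqv},-\gd v}$ and $\inpt{\eqref{eqz},-\gd z}$, the $D_2$-coupling again gives $-D_2\|\nabla(v-z)\|^2 \le 0$ and the linear term $bu$ is controlled by Young's inequality against $\|u\|^2 \le K_1$; the only genuinely new term is the cubic one, estimated via the generalized H\"older inequality $\|u^2v\|\le\|u\|_{L^6}^2\|v\|_{L^6}$ by
\[
\left|\inpt{u^2v,\gd v}\right| \le \|u^2 v\|\,\|\gd v\| \le \tfrac{d_2}{4}\|\gd v\|^2 + \tfrac{1}{d_2}\|u\|_{L^6}^4\,\|v\|_{L^6}^2 ,
\]
together with its $(w,z)$ analogue. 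This is the main obstacle, and it is precisely where the two preceding lemmas are needed: Lemma \ref{L:uwc} gives $\|u\|_{L^6}^2 \le \eta\|\nabla u\|^2 \le \eta Q_1$ for $t>T_1$, while Lemma \ref{L:absbp} with $p=3$ gives $\|v\|_{L^6}^2 \le K_3^{1/3}$ for $t>T_B^3$ (the corresponding absorbing time for $B_0$); hence $\|u\|_{L^6}^4\|v\|_{L^6}^2$ is bounded by a constant depending only on $K_1$ and $|\gw|$. With these substitutions the $(v,z)$-inequality reduces to $\tfrac{d}{dt}\|\nabla(v,z)\|^2 + d_2\|\gd(v,z)\|^2 \le C_v$ for $t > \max\{T_0,T_1,T_B^3\}$, and, as before, Poincar\'e together with the bound $\int_t^{t+1}(\|\nabla v\|^2+\|\nabla z\|^2)\,d\tau \le d_2^{-1}(K_1+b^2|\gw|)$ — which follows from \eqref{vztt} and \eqref{vzvp} — lets the uniform Gronwall inequality give $\|\nabla(v(t),z(t))\|^2 \le Q_2''$ for $t > \max\{T_0,T_1,T_B^3\}+1$.

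Finally I would set $Q_2 = Q_2' + Q_2''$ and $T_2 = \max\{T_0,T_1,T_B^3\}+1$, which is finite, depends only on the absorbing ball $B_0$, and satisfies $T_2 > T_1$, establishing \eqref{q2}; since $Q_1$ and $K_3$ are themselves determined by $K_1$ and $|\gw|$, so is $Q_2$, hence it is independent of the initial data. I expect the cubic-term estimate — the need to have an $H^1$ bound on $(u,w)$ and an $L^6$ bound on $(v,z)$ available simultaneously — to be the only real difficulty; everything else is a routine Young/Poincar\'e/uniform-Gronwall chase.
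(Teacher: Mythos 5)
Your proposal is correct and follows the same overall skeleton as the paper's proof (test \eqref{eqv}, \eqref{eqz}, \eqref{eqp}, \eqref{eqs} against the negative Laplacian, discard the nonpositive coupling terms, control the lower-order terms by $K_1$, and close with the uniform Gronwall inequality using the integrated bounds \eqref{vztt} and the $(\vp,\psi)$ analogue). The one genuine divergence is your treatment of the cubic term $\int_\gw (u^4v^2+w^4z^2)\,dx$: you bound $\|v\|_{L^6}^2$ by the constant $K_3^{1/3}$ from Lemma \ref{L:absbp} with $p=3$, which turns the $(v,z)$ differential inequality into a genuinely dissipative one, $\tfrac{d}{dt}\|\nabla(v,z)\|^2 + d_2\|\gd(v,z)\|^2 \le C_v$, at the cost of introducing the extra waiting time $T_{B_0}^3$. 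The paper instead keeps $\|v\|_{L^6}^2 \le \eta\|\nabla v\|^2$ via the Sobolev embedding, so that the cubic contribution becomes a term linear in the Gronwall variable $\|\nabla(v,z)\|^2$ with constant coefficient $2\eta^6 Q_1^2/d_2$ (using only Lemma \ref{L:uwc}), and then applies the uniform Gronwall inequality to $\tfrac{d}{dt}\|\nabla(v,z)\|^2 \le \tfrac{2\eta^6 Q_1^2}{d_2}\|\nabla(v,z)\|^2 + K_1(d_2+\tfrac{2b^2}{d_2})$; Lemma \ref{L:absbp} is never invoked in the paper's argument here. Both routes are valid and yield a $Q_2$ depending only on $K_1$ and $|\gw|$; yours buys a cleaner (exponentially stable) inequality at the price of an additional dependency on the $L^6$ absorbing lemma, while the paper's keeps the dependency set minimal. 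Your $(\vp,\psi)$ computation and the final assembly $Q_2=Q_2'+Q_2''$, $T_2=\max\{T_0,T_1,T_{B_0}^3\}+1$ are exactly in the spirit of the paper and are correct.
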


\begin{proof}
	Take the inner-products $\inpt{\eqref{eqv}, -\gd v(t)}$ and $\inpt{\eqref{eqz}, -\gd z(t)}$ and then sum up the two equalities to obtain
\begin{equation} \label{vzq}
	\begin{split}
	\frac{1}{2} &\frac{d}{dt} \|\nabla (v, z)\|^2 + d_2 \|\gd (v, z)\|^2 = -\int_\gw b(u\gd v + w \gd z)\, dx \\
	& + \int_\gw (u^2 v \gd v + w^2 z \gd z )\, dx - D_2 \int_\gw [(z - v)\gd v + (v - z) \gd z ]\, dx \\
	 \leq &\, \frac{d_2}{2} \|\gd (v, z)\|^2 + \frac{b^2}{d_2} \|(u, w)\|^2 + \frac{1}{d_2} \int_\gw (u^4 v^2 + w^4 z^2) \, dx \\
	& - D_2 \int_\gw  (|\nabla v |^2 - 2 \nabla v \cdot \nabla z + |\nabla z |^2 )\, dx \\
	\leq &\, \frac{d_2}{2} \|\gd (v, z)\|^2 + \frac{b^2}{d_2} \|(u, w)\|^2 + \frac{1}{d_2} \int_\gw (u^4 v^2 + w^4 z^2) \, dx, \quad t > 0.
	\end{split}
\end{equation}
Since 
$$
	\|\nabla (v(t), z(t))\|^2 = - (\inpt{v, \gd v} + \inpt{z, \gd z}) \leq \frac{1}{2} \left(\|v(t)\|^2 +\|z(t)\|^2 + \|\gd v(t)\|^2 + \|\gd z(t)\|^2 \right),
$$
in \eqref{vzq} we have
$$
	\frac{d_2}{2} \|\gd (v, z)\|^2 \geq d_2 \|\nabla (v, z)\|^2 - \frac{d_2}{2} \| (v, z) \|^2.
$$
Then by using H\"{o}lder inequality and the embedding inequalities mentioned in the beginning of this section and by Lemma \ref{L:uwc}, from the above inequality \eqref{vzq} we get 
\begin{align*} 
	&\frac{d}{dt}\|\nabla (v, z)\|^2 \leq \frac{d}{dt}\|\nabla (v, z)\|^2 + 2d_2 \|\nabla (v, z)\|^2  \\
	\leq &\, d_2 \|(v, z)\|^2+ \frac{2b^2}{d_2} \|(u, w)\|^2 + \frac{2}{d_2} (\|u\|_{L^6}^4 \|v\|_{L^6}^2 + \|w\|_{L^6}^4 \|z\|_{L^6}^2 ) \\
	\leq &\, \left(d_2 + \frac{2b^2}{d_2}\right)K_1 + \frac{2\eta^6}{d_2} (\|\nabla u \|^4 + \|\nabla w \|^4)\|\nabla (v, z)\|^2 \\
	\leq &\, K_1 \left(d_2 + \frac{2b^2}{d_2} \right) + \frac{2\eta^6 Q_1^2}{d_2} \|\nabla (v, z)\|^2, \quad t > T_1.
\end{align*}
Applying the uniform Gronwall inequality to 
\begin{equation} \label{nbvz} 
	\frac{d}{dt}\|\nabla (v, z)\|^2 \leq \frac{2\eta^6 Q_1^2}{d_2} \|\nabla (v, z)\|^2 + K_1 \left(d_2 + \frac{2b^2}{d_2} \right), \quad t > T_1,
\end{equation}
and using \eqref{vztt}, we can assert that
\begin{equation} \label{nvziq}
	\|\nabla (v(t), z(t))\|^2 \leq C_{17}, \quad \textup{for} \;\; t > T_1 + 1,
\end{equation}
where
$$
	C_{17} = \left(\frac{1}{d_2} \left[K_1 + \left(1 + \frac{1}{2\ga d_2}\right) b^2 |\gw |\right] +  K_1 \left[d_2 + \frac{2b^2}{d_2} \right]\right) e^{2 \eta^6 Q_1^2/d_2} .
$$

Next take the inner-products $\inpt{\eqref{eqp}, \vp(t)}$ and $\inpt{\eqref{eqs}, \psi (t)}$ and then sum up the two equalities to get
$$
	\frac{1}{2} \frac{d}{dt} \|(\vp, \psi)\|^2 + d_3 \|\nabla (\vp, \psi)\|^2 + (\gl + N)\|(\vp, \psi)\|^2 \leq \frac{k}{2} \left(\|(u, w)\|^2 + \|(\vp, \psi)\|^2\right),
$$
so that
\begin{equation} \label{vptt}
	\begin{split}
	&\int_t^{t+1} \|\nabla (\vp(\tau), \psi(\tau))\|^2 \, d\tau \leq \frac{1}{2d_3} \|(\vp (t), \psi (t))\|^2 \\
         + \frac{k}{2d_3} \int_t^{t+1} &(\|(u(\tau), w(\tau))\|^2 + \|(\vp(\tau), \psi(\tau))\|^2) d\tau \leq \frac{(1 + k)K_1}{2d_3}, \quad t > T_0.
	\end{split}
\end{equation}
Then take the inner-products $\inpt{\eqref{eqp}, -\gd \vp(t)}$ and $\inpt{\eqref{eqs}, -\gd \psi (t)}$ and sum up the two equalities to obtain
\begin{align*}
	&\frac{1}{2} \frac{d}{dt} \|\nabla (\vp, \psi)\|^2 + d_3 \|\gd (\vp, \psi)\|^2 + (\gl + N) \|\nabla (\vp, \psi)\|^2 \\
	 = &\, - \int_\gw k(u \gd \vp + w \gd \psi)\, dx - D_3 \int_\gw [(\psi - \vp) \gd \vp + (\vp - \psi) \gd \psi ]\, dx \\
	 \leq &\, d_3 \|\gd (\vp, \psi)\|^2 + \frac{k^2}{4d_3} (\| u \|^2 + \| w \|^2 ) - D_3 \int_\gw (|\nabla \vp |^2 - 2 \nabla \vp \cdot \nabla \psi + |\nabla \psi |^2)\, dx \\
	 \leq &\, d_3 \|\gd (\vp, \psi)\|^2 + \frac{k^2}{4d_3} K_1, \quad t > T_0,
\end{align*}
so that
\begin{equation} \label{phps}
	\frac{d}{dt} \|\nabla (\vp, \psi)\|^2 + 2 (\gl + N) \|\nabla (\vp, \psi)\|^2 \leq \frac{k^2}{2d_3} K_1, \quad t > T_0.
\end{equation}
We can apply the uniform Gronwall inequality to \eqref{phps} with the aid of \eqref{vptt} to reach the estimate
\begin{equation} \label{nps}
	\|\nabla (\vp (t), \psi (t))\|^2 \leq \frac{K_1}{2d_3} (1 + k + k^2) e^{2(\gl + N)} \overset{\textup{def}}{=} C_{18}, \quad t > T_0 + 1.
\end{equation}
Finally let $Q_2 = C_{17} + C_{18}$ and $T_2 = T_1 + 1$. By \eqref{nvziq} and \eqref{nps}, we see that \eqref{q2} is proved.
\end{proof}

\section{\textbf{Global Attractor and Its Properties}}

In this section we reach the proof of Theorem \ref{Mthm} (Main Theorem) on the existence of a global attractor, which will be denoted by $\ms{A}$, for the extended Brusselator semiflow $\csg$ and we shall show several properties of this global attractor $\ms{A}$, namely, the regularity of $\ms{A}$, the property of being an $(H, E)$ global attractor, and the finite Hausdorff and fractal dimensionality.

\begin{proof}[Proof of Theorem \textup{1}]
In Lemma \ref{L:absb} we have shown that the extended Brusselator semiflow $\csg$ has an absorbing set $B_0$ in $H$. Combining Lemma \ref{L:uwc} and Lemma \ref{L:vzvpc} we proved that 
$$
	\|S(t)g_0 \|_E^2 \leq Q_1 + Q_2, \quad \textup{for} \;\; t > T_2 \;\; \textup{and for} \;\; g_0 \in B_0,
$$
which implies that $\{S(t)B_0: t > T_2\}$ is a bounded set in $E$ and consequently a precompact set in $H$. Therefore, the extended Brusselator semiflow ${\csg}$ is asymptotically compact in $H$.  Finally we apply Proposition \ref{P2} to reach the conclusion that there exists a global attractor $\ms{A}$ in $H$ for this extended Brusselator semiflow $\csg$.
\end{proof}

Now we show that the global attractor $\ms{A}$ of the extended Brusselator semiflow is an $(H, E)$ global attractor with the regularity $\ms{A} \subset \mathbb{L}^\infty (\gw)$. The concept of $(H, E)$ global attractor was introduced in \cite{BV83}.

\begin{definition} \label{D:hea}
	Let $\{\Sigma (t)\}_{t\geq 0}$ be a semiflow on a Banach space $X$ and let $Y$ be a compactly imbedded subspace of $X$. A subset $\mathcal{A}$ of $Y$ is called an $(X, Y)$ global attractor for this semiflow if $\mathcal{A}$ has the following properties,
	
	\textup{(i)} $\mathcal{A}$ is a nonempty, compact, and invariant set in $Y$.
	
	\textup{(ii)} $\mathcal{A}$ attracts any bounded set $B \subset X$ with respect to the $Y$-norm, namely, there is a time $\tau = \tau (B)$ such that $\Sigma (t)B \subset Y$ for $t > \tau$ and $dist_Y (\Sigma (t)B, \mathcal{A}) \to 0$, as $t \to \infty$.
\end{definition}

\begin{lemma} \label{L:ste}
	Let $\{g_m\}$ be a sequence in $E$ such that $\{g_m\}$ converges to $g_0 \in E$ weakly in $E$ and $\{g_m\}$ converges to $g_0$ strongly in $H$, as $m \to \infty$. Then
	$$
		\lim_{m \to \infty} S(t) g_m = S(t) g_0 \; \; \textup{strongly in} \; E,
	$$
and the convergence is uniform with respect to $t$ in any given compact interval $[t_0, t_1] \subset (0, \infty)$.
\end{lemma}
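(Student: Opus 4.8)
The plan is to control the difference $G_m(t)=S(t)g_m-S(t)g_0$ of the two solution trajectories, first in the $H$-norm and then in the $E$-norm, using that $g_m\to g_0$ strongly in $H$ while both families of trajectories remain uniformly bounded in $E$. Since $g_m\rightharpoonup g_0$ weakly in $E$, the sequence $\{g_m\}$ is bounded in $E$, say $\|g_m\|_E\le\rho$ and $\|g_0\|_E\le\rho$. By the parabolic regularity and the \emph{a priori} estimates underlying Lemma \ref{L:locs} and Lemma \ref{L:glsn}, for every $t_1>0$ the trajectories $\{S(\cdot)g_m\}_m$ and $S(\cdot)g_0$ are bounded in $C([0,t_1];E)$ uniformly in $m$; call a common bound $M=M(\rho,t_1)$. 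By the embedding $H^1_0(\gw)\hookrightarrow L^6(\gw)$, every component of $S(t)g_m$ and of $S(t)g_0$ then has $L^6(\gw)$-norm $\le CM$ for $t\in[0,t_1]$. Also, since weak solutions become strong on $(0,\infty)$, all the inner products with $-\gd G_m$ used below are legitimate for $t>0$.

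Step 2 ($H$-convergence). Subtract \eqref{eveq} for the two trajectories and take the $H$-inner product with $G_m(t)$: the dissipative term contributes $\inpt{\boldsymbol{d}\nabla G_m,\nabla G_m}\ge d_{\min}\|\nabla G_m\|^2$ with $d_{\min}=\min\{d_1,d_2,d_3\}$, while $\inpt{f(S(t)g_m)-f(S(t)g_0),G_m}$ is estimated termwise — the linear terms are bounded by $C\|G_m\|^2$, the $D_i$-coupling contributions are nonpositive, and each cubic difference such as $u_1^2v_1-u_2^2v_2=u_1^2(v_1-v_2)+(u_1+u_2)(u_1-u_2)v_2$ is bounded in $L^{3/2}(\gw)$ by $CM^2$ times an $L^3(\gw)$-norm of a component of $G_m$, hence via the interpolation $\|\cdot\|_{L^3}\le C\|\cdot\|^{1/2}\|\nabla(\cdot)\|^{1/2}$ (valid for $n\le3$) and Young's inequality the whole nonlinear term is $\le\tfrac{d_{\min}}{2}\|\nabla G_m\|^2+C_M\|G_m\|^2$. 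This gives
\[
\frac{d}{dt}\|G_m(t)\|^2+d_{\min}\|\nabla G_m(t)\|^2\le C_M\|G_m(t)\|^2,\qquad\textup{a.e. }t\in(0,t_1).
\]
By Gronwall, $\sup_{t\in[0,t_1]}\|G_m(t)\|^2\le e^{C_Mt_1}\|g_m-g_0\|^2\to0$, and integrating the displayed inequality over $[0,t_1]$ yields in addition $\int_0^{t_1}\|\nabla G_m(s)\|^2\,ds\to0$ as $m\to\infty$.

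Step 3 ($E$-convergence). Subtract the equations and take the $H$-inner product with $-\gd G_m(t)$: the dissipative term gives $\ge d_{\min}\|\gd G_m\|^2$, the linear and coupling terms are bounded by $\varepsilon\|\gd G_m\|^2+C\|G_m\|^2$, and the cubic differences are bounded in $L^2(\gw)$ by $CM^2\|\nabla G_m\|$ (using $\|abc\|_{L^2}\le\|a\|_{L^6}\|b\|_{L^6}\|c\|_{L^6}$ and Step 1), so their contribution is $\le\varepsilon\|\gd G_m\|^2+C_MM^4\|\nabla G_m\|^2$. Choosing $\varepsilon$ small to absorb $\|\gd G_m\|^2$ and dropping the leftover, one obtains
\[
\frac{d}{dt}\|\nabla G_m(t)\|^2\le\alpha\,\|\nabla G_m(t)\|^2+C\,\|G_m(t)\|^2,\qquad t\in(0,t_1),
\]
with $\alpha=\alpha(M)$ independent of $m$. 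Now fix $[t_0,t_1]\subset(0,\infty)$. By Step 2, $\int_{t_0/2}^{t_0}\|\nabla G_m(s)\|^2\,ds\to0$, so there is $s_m\in(t_0/2,t_0)$ with $\|\nabla G_m(s_m)\|^2\le\frac{2}{t_0}\int_{t_0/2}^{t_0}\|\nabla G_m\|^2\,ds\to0$. Integrating the last inequality from $s_m$ to $t\in[t_0,t_1]$ and using Gronwall gives $\|\nabla G_m(t)\|^2\le e^{\alpha t_1}(\|\nabla G_m(s_m)\|^2+Ct_1\sup_{[0,t_1]}\|G_m\|^2)\to0$ uniformly in $t\in[t_0,t_1]$. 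Since $\|\cdot\|_E$ is equivalent to $\|\nabla\cdot\|$, this is $S(t)g_m\to S(t)g_0$ strongly in $E$, uniformly on $[t_0,t_1]$.

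The main obstacle is Step 3: because the data converge only weakly (not strongly) in $E$, the slice $S(t_0)g_m$ need not converge to $S(t_0)g_0$ in $E$, so one cannot run Gronwall for $\|\nabla G_m\|^2$ starting at $t=t_0$. The remedy is to combine the $L^2$-in-time gradient convergence of Step 2 with a shift of the initial instant to a well-chosen $s_m<t_0$ at which the $E$-norm of the difference is already small; this forces one to handle the cubic nonlinearity with exponents yielding a differential inequality whose coefficients are independent of $m$ on $[t_0/2,t_1]$. A secondary point of care is keeping all constants ($M$, $C_M$, $\alpha$) $m$-independent, which is exactly what the uniform $E$-bound in Step 1 provides.
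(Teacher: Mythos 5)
Your argument is correct and is essentially the standard proof of this continuity lemma: the paper itself gives no proof here, deferring to \cite[Lemma 10]{yY10}, which runs the same two-tier energy estimate (Gronwall for the trajectory difference in $H$, then testing with $-\Delta G_m$ and launching the $E$-level Gronwall from a well-chosen intermediate time $s_m\in(t_0/2,t_0)$ at which $\|\nabla G_m(s_m)\|$ is already small). The only point you assert rather than establish is the uniform bound $M$ for $\{S(\cdot)g_m\}_m$ in $C([0,t_1];E)$; this does follow from the paper's machinery (the integral bounds of the type \eqref{vztt} and \eqref{rtt} on $[0,t_1]$ combined with Gronwall with integrable coefficient, using $\|g_m\|_E\le\rho$ at $t=0$), but it deserves an explicit line since Lemma \ref{L:uwc} and Lemma \ref{L:vzvpc} state such bounds only for $t$ beyond an absorbing time.
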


The proof of this lemma is seen in \cite[Lemma 10]{yY10}.

\begin{theorem} \label{Thm2}
	The global attractor $\ms{A}$ in $H$ for the extended Brusselator semiflow $\csg$ is indeed an $(H, E)$ global attractor and $\ms{A}$ is a bounded subset in $\mathbb{L}^\infty (\gw)$.
\end{theorem}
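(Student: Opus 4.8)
The plan is to establish the two assertions in turn: first the $(H,E)$ attraction property, then the $\mathbb{L}^\infty$ boundedness. For the $(H,E)$ part, note that by Lemma \ref{L:absb} any bounded set $B \subset H$ enters the $H$-absorbing ball $B_0$ after a finite time, and by the combination of Lemma \ref{L:uwc} and Lemma \ref{L:vzvpc} the trajectories starting in $B_0$ satisfy $\|S(t)g_0\|_E^2 \leq Q_1 + Q_2$ for all $t > T_2$. Thus $\bigcup_{t > T_B}S(t)B$ is a bounded subset of $E$; in particular $\ms{A} = \om(B_0)$ is bounded in $E$, and since $\ms{A}$ is invariant and $E \hookrightarrow H$ is continuous, $\ms{A}$ is weakly compact in $E$. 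To upgrade this to an $(H,E)$ attractor I would argue by contradiction: if $\ms{A}$ did not attract some bounded $B$ in the $E$-norm, there would be sequences $g_m \in B$, $t_m \to \infty$ and $\varepsilon_0 > 0$ with $\mathrm{dist}_E(S(t_m)g_m, \ms{A}) \geq \varepsilon_0$. Writing $t_m = 1 + s_m$ and replacing $g_m$ by $h_m = S(1)g_m$, after finite time $h_m$ lies in $B_0 \cap E$ with $\|h_m\|_E$ uniformly bounded, so along a subsequence $h_m \rightharpoonup h_\infty$ weakly in $E$; by the asymptotic compactness already established in the proof of Theorem \ref{Mthm} (precompactness of $S(t)B_0$ in $H$ for $t > T_2$) we may also arrange $h_m \to h_\infty$ strongly in $H$. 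Then Lemma \ref{L:ste} gives $S(s_m)h_m \to S(s_m)h_\infty$ strongly in $E$, uniformly on compact $t$-intervals, and since $S(s_m)h_\infty$ approaches $\ms{A}$ in $H$ (hence, using the $E$-bound and interpolation, the relevant distance is controlled) one reaches a contradiction with $\mathrm{dist}_E(S(t_m)g_m, \ms{A}) \geq \varepsilon_0$. I would also invoke Lemma \ref{L:ste} to re-confirm that $\ms{A}$ is compact — not merely weakly compact — in $E$: any point of $\ms{A}$ is a limit $S(t)g_0$ of points $S(t)g_m$ with $g_m \to g_0$ in the sense of that lemma, so $\ms{A}$ is its own image under the $E$-continuous map, giving (i) of Definition \ref{D:hea}.

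For the $\mathbb{L}^\infty$ regularity, the plan is a bootstrap/De Giorgi-Moser type argument exploiting the $L^{2p}$-absorbing estimates of Lemma \ref{L:absbp} together with the $E$-bound on $\ms{A}$. Since $\ms{A} \subset E = [H_0^1(\gw)]^6 \hookrightarrow \mathbb{L}^6(\gw)$ for $n \leq 3$, the components $u, w, \vp, \psi$ are already in $L^6$, and Lemma \ref{L:absbp} gives uniform bounds on $\|(v,z)\|_{L^6}$ on $\ms{A}$. Because $\ms{A}$ is invariant, every $g \in \ms{A}$ lies on a complete bounded trajectory, so one may run parabolic smoothing: writing the equations \eqref{equ}--\eqref{eqs} in the form $g_t = Ag + f(g)$ and using that $f(g) \in \mathbb{L}^{3}(\gw)$ (the worst terms $u^2v$, $w^2z$ are products of $L^6$ functions), the analytic semigroup $e^{At}$ maps $\mathbb{L}^3$ into $W^{2,3} \hookrightarrow \mathbb{L}^\infty(\gw)$ for $n \leq 3$; applying the variation-of-constants formula on a backward trajectory on, say, $[-1,0]$ and estimating $\int_{-1}^0 \|e^{A(-s)}\|_{\mathcal{L}(\mathbb{L}^3, \mathbb{L}^\infty)}\,ds < \infty$ (the singularity $s^{-n/(2\cdot 3)} = s^{-1/2}$ is integrable) yields a uniform $\mathbb{L}^\infty$ bound on $g(0)$ depending only on the $\mathbb{L}^3$-bound of $f(g)$ along $\ms{A}$, hence only on $K_1, K_2, K_3, |\gw|$. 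Since the point $g \in \ms{A}$ was arbitrary and all constants are uniform, $\ms{A}$ is bounded in $\mathbb{L}^\infty(\gw)$.

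The main obstacle I anticipate is not the $\mathbb{L}^\infty$ bootstrap — that is fairly routine once Lemma \ref{L:absbp} supplies the $L^6$ (or higher) integrability — but rather making the $(H,E)$ attraction argument fully rigorous, specifically transferring the asymptotic compactness from $H$ to $E$ uniformly along $t_m \to \infty$. The subtlety is that Lemma \ref{L:ste} gives $E$-convergence of $S(\cdot)h_m$ to $S(\cdot)h_\infty$ only on \emph{compact} time intervals $[t_0,t_1] \subset (0,\infty)$, whereas $s_m \to \infty$; one must therefore combine this with the \emph{uniform} (in initial data from $B_0$) $E$-bound $Q_1+Q_2$ valid for all $t > T_2$ and the fact that $\om(B_0)$ attracts in $H$, so that for $m$ large $S(s_m)h_\infty$ is simultaneously close to $\ms{A}$ in $H$ and lies in a fixed $E$-ball, and then one uses an argument (e.g. the weak lower semicontinuity of the $E$-norm together with energy-equation continuity, as in \cite{yY10}) to conclude $E$-closeness to $\ms{A}$. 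Handling this passage carefully, rather than the calculations, is where the real work lies; everything else follows by assembling Lemmas \ref{L:absb}, \ref{L:absbp}, \ref{L:uwc}, \ref{L:vzvpc}, and \ref{L:ste}.
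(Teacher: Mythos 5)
Your $\mathbb{L}^\infty$ argument is essentially the paper's: invariance of $\ms{A}$ plus the variation-of-constants formula and the $(L^p,L^\infty)$ smoothing $\|e^{At}\|_{\mathcal{L}(\mathbb{L}^p,\mathbb{L}^\infty)}\leq C(p)t^{-n/(2p)}$, evaluated at $t=1$. (One slip: for $u,v\in L^6$ the H\"older exponents give $u^2v\in L^2$, not $L^3$; the paper works with $p=2$ and the singularity $(t-\sigma)^{-3/4}$, which is still integrable, so nothing breaks.)

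The $(H,E)$ part, however, has a genuine gap, and it is exactly the one you flag at the end without resolving. Your decomposition $t_m = 1+s_m$, $h_m=S(1)g_m$, puts the \emph{fixed} time at the front and the \emph{diverging} time $s_m$ at the back, which is where Lemma \ref{L:ste} cannot reach: that lemma gives $E$-convergence of $S(\cdot)h_m$ only uniformly on compact intervals, and no amount of "weak lower semicontinuity of the $E$-norm" closes the distance $\mathrm{dist}_E(S(s_m)h_m,\ms{A})$ when $s_m\to\infty$ — weak-$E$ closeness is strictly weaker than the norm closeness you need to contradict $\mathrm{dist}_E\geq\varepsilon_0$. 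The paper reverses the decomposition: it writes $S(t_n)g_n = S(T)\,[S(t_n-T)g_n]$ with $T$ a \emph{fixed} time exceeding the absorbing times, uses Lemmas \ref{L:uwc}--\ref{L:vzvpc} to see that $\{S(t_n-T)g_n\}$ is bounded in $E$, extracts a subsequence converging weakly in $E$ and (by compact embedding) strongly in $H$ to some $g^*$, and only then applies Lemma \ref{L:ste} over the fixed window of length $T$ to conclude $S(t_{n_j})g_{n_j}\to S(T)g^*$ strongly in $E$. This proves asymptotic compactness of the semiflow in $E$ outright, so Proposition \ref{P2} (applied in $E$) yields a global attractor $\ms{A}_E$ compact in the $E$-norm, and a short mutual-attraction argument gives $\ms{A}=\ms{A}_E$; compactness of $\ms{A}$ in $E$ and the $E$-norm attraction of bounded sets of $H$ then come for free, with no contradiction argument and no separate patch for Definition \ref{D:hea}(i). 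You should restructure your argument around this fixed-tail decomposition; as written, the step you yourself identify as "the real work" is missing.
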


\begin{proof}
	By Lemma \ref{L:uwc} and Lemma \ref{L:vzvpc}, we can assert that there exists a bounded absorbing set $B_1 \subset E$ for the extended Brusselator semiflow $\csg$ on $H$ and this absorbing is in the $E$-norm. Indeed,
$$
	B_1 = \{g \in E: \| g \|_E^2 = \|\nabla g \|^2 \leq Q_1 + Q_2\}.
$$
	
	Now we show that this semiflow $\csg$ is asymptotically compact with respect to the strong topology in $E$. For any time sequence $\{t_n \}, t_n \to \infty$, and any bounded sequence $\{g_n \} \subset E$, there exists a finite time $t_0 \geq 0$ such that $S(t) \{g_n\} \subset B_0$, for any $t > t_0$. Then for an arbitrarily given $T > t_0 + T_2$, where $T_2$ is the time specified in Lemma \ref{L:vzvpc}, there is an integer $n_0 \geq 1$ such that $t_n > 2T$ for all $n > n_0$. According to Lemma \ref{L:uwc} and Lemma \ref{L:vzvpc}, 
$$
	\{S(t_n - T) g_n\}_{n > n_0} \; \textup{is a bounded set in}\; E.
$$	
Since $E$ is a Hilbert space, there is an increasing sequence of integers $\{n_j\}_{j=1}^\infty$ with $n_1 > n_0$, such that
$$
	  \lim_{j \to \infty} S(t_{n_j} - T) g_{n_j} = g^* \;\; \textup{weakly in} \; E.
$$
By the compact imbedding $E \hookrightarrow H$, there is a further sebsequence of $\{n_j\}$, but relabeled as the same as $\{n_j\}$, such that
$$
	\lim_{j \to \infty} S(t_{n_j} - T) g_{n_j} = g^* \;\; \textup{strongly in} \; H.
$$
Then by Lemma \ref{L:ste}, we have the following convergence with respect to the $E$-norm,
$$
	\lim_{j \to \infty} S(t_{n_j}) g_{n_j} = \lim_{j \to \infty} S(T) S(t_{n_j} - T) g_{n_j} = S(T) g^* \;\; \textup{strongly in} \; E.
$$
This proves that $\csg$ is asymptotically compact in $E$. 

Therefore, by Proposition \ref{P2}, there exists a global attractor $\ms{A}_E$ for the extended Brusselator semiflow $\csg$ in $E$. According to Definition \ref{D:hea} and the fact that $B_1$ attracts $B_0$ in the $E$-norm due to the combination of Lemma \ref{L:uwc} and Lemma \ref{L:vzvpc}, we see that this global attractor $\ms{A}_E$ is an $(H, E)$ global attractor. Moreover, the invariance and the boundedness of $\ms{A}$ in $H$ and in $E$ imply that 
\begin{align*}
	&\ms{A}_E \; \textup{attracts} \; \ms{A} \; \textup{in} \; E, \; \textup{so that} \; \ms{A} \subset \ms{A}_E; \\
	&\ms{A} \; \textup{attracts} \; \ms{A}_E \; \textup{in} \; H, \; \textup{so that} \; \ms{A}_E \subset \ms{A}.
\end{align*}
Therefore, $\ms{A} = \ms{A}_E$ and we proved that the global attractor $\ms{A}$ in $H$ is indeed an $(H, E)$ global attractor for this extended Brusselator semiflow.

Next we show that $\ms{A}$ is a bounded subset in $\mathbb{L}^\infty (\gw)$. By the $(L^p, L^\infty)$ regularity of the analytic $C_0$-semigroup $\{e^{At}\}_{t\geq 0}$ stated in \cite[Theorem 38.10]{SY02}, one has $e^{At}: \mathbb{L}^p (\gw) \longrightarrow \mathbb{L}^\infty (\gw)$ for $t > 0$, and there is a constant $C(p) > 0$ such that
\begin{equation} \label{cp}
	\| e^{At} \|_{\mathcal{L} (\mathbb{L}^p, \mathbb{L}^\infty)} \leq C(p) \, t^{- \frac{n}{2p}}, \;\; t > 0, \; \; \textup{where} \; n = \textup{dim} \, \gw.
\end{equation}
By the variation-of-constant formula satisfied by the mild solutions (of course strong solutions), for any $g \in \ms{A} \, (\subset E)$, we have
\begin{equation} \label{mld}
	\begin{split}
	\|S(t) g\|_{L^\infty} &\leq \|e^{At} \|_{\mathcal{L} (L^2,L^\infty)} \|g\| + \int_0^t \|e^{A(t- \sigma)}\|_{\mathcal{L} (L^2, L^\infty)} \| f(S(\sigma)g) \| \, d\sigma \\
	&\leq  C(2) t^{- \frac{3}{4}} \|g\| + \int_0^t C(2) (t- \sigma)^{-\frac{3}{4}} L(Q_1, Q_2) \|S(\sigma) g\|_E \, d\sigma, \quad t \geq 0,
	\end{split}
\end{equation}
where $C(2)$ is in the sense of \eqref{cp}, and $L(Q_1, Q_2)$ is the Lipschitz constant of the nonlinear map $f$ restricted on the closed, bounded ball centered at the origin with radius $Q_1 + Q_2$ in $E$. By the invariance of the global attractor $\ms{A}$, surely we have
$$
	\{S(t) \ms{A}: t \geq 0\} \subset B_0 \, (\subset H) \quad \textup{and} \quad \{S(t) \ms{A}: t \geq 0\} \subset B_1 \, (\subset E).
$$
Then from \eqref{mld} we get
\begin{equation} \label{bdift}
	\begin{split}
	\|S(t) g\|_{L^\infty} &\leq C(2) K_1 t^{- \frac{3}{4}} + \int_0^t C(2) L(Q_1, Q_2) (Q_1 + Q_2) (t- \sigma)^{-\frac{3}{4}} \, d\sigma \\
	&= C(2) [K_1 t^{- \frac{3}{4}} +  4 L(Q_1, Q_2) (Q_1 + Q_2) t^{\frac{1}{4}}], \quad \textup{for} \; t > 0.
	\end{split}
\end{equation}
Specifically one can take $t = 1$ in \eqref{bdift} and use the invariance of $\ms{A}$ to obtain
$$
	\| g \|_{L^\infty} \leq C(2)( K_1 +  4 L(Q_1, Q_2) (Q_1 + Q_2)), \quad \textup{for any} \; g \in \ms{A}.
$$
Thus the global attractor $\ms{A}$ is a bounded subset in $\mathbb{L}^\infty (\gw)$.
\end{proof}

Next consider the Hausdorff and fractal dimensions of the global attractor $\ms{A}$. The background concepts and results can be seen in \cite[Chapter V]{rT88}. Let $q_{m} = \limsup_{t \to \infty} \, q_{m} (t)$, where
\begin{equation} \label{trq} 
	q_{m} (t) = \sup_{g_0 \in \ms{A}} \;  \, \sup_{\substack{g_{i} \in H, \|g_{i} \| = 1\\ i = 1, \cdots, m}} \; \, \left( \frac{1}{t} \int_{0}^{t} \textup{Tr} \left[ \left( A + f^{\prime} (S(\tau) g_0 ) \right) \Gamma_{m} (\tau)\right] \, d\tau \right),
\end{equation}
in which $\textup{Tr} \, [(A + f^{\prime} (S(\tau)g_0)) \Gamma_m (\tau)]$ is the trace of the linear operator $(A + f^{\prime} (S(\tau)g_0))\Gamma_m (\tau)$, $f^{\prime} (g)$ is the Fr\'{e}chet derivative of the Nemytskii map $f$ in \eqref{eveq}, and $\Gamma_{m} (t)$ stands for the orthogonal projection of the space $H$ on the subspace spanned by $G_1 (t), \cdots, G_{m} (t)$, with
\begin{equation} \label{Frder}
	G_{i} (t) = L(S(t), g_0)g_{i},   \quad i = 1, \cdots, m.
\end{equation}
Here $f^{\prime}(S(\tau)g_0)$ is the Fr\'{e}chet derivative of the map $f$ defined by \eqref{opF} at  $S(\tau)g_0$, and $L(S(t), g_0)$ is the Fr\'{e}chet derivative of the map $S(t)$ at $g_0$, with $t$ fixed. 

The following proposition, cf.  \cite[Chapter 5]{rT88},  will be used to show the finite upper bounds of the Hausdorff and fractal dimensions of this global attractor $\ms{A}$.

\begin{proposition} \label{P3}
If there is an integer $m$ such that $q_{m} < 0$, then the Hausdorff dimension $d_{H} (\ms{A})$ and the fractal dimension $d_{F} (\ms{A})$ of $\ms{A}$ satisfy
\begin{equation}  \label{hfd}
	d_{H} (\ms{A}) \leq m,  \quad \textup{and} \quad d_{F} (\ms{A}) \leq m \max_{1 \leq j \leq m - 1} \left( 1 + \frac{(q_{j})_{+}}{| q_{m} |} \right) \leq 2m. 
\end{equation}
\end{proposition}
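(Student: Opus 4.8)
The plan is to place Proposition \ref{P3} inside the abstract framework of Constantin--Foias--Temam on the dimension of attractors, cf. \cite[Chapter V]{rT88}, and to verify its two hypotheses for the extended Brusselator semiflow $\csg$: first, that the semiflow is uniformly differentiable on $\ms{A}$, with the Fr\'{e}chet derivative $L(S(t), g_0)$ given by the solution operator of the linearized equation; second, that the $m$-dimensional volume element in $H$ transported by this linearized flow contracts uniformly when $q_m < 0$.

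First I would study the linearized (variational) equation
\begin{equation*}
	\frac{dU}{dt} = \left(A + f^{\prime}(S(t)g_0)\right) U, \quad U(0) = g_i \in H, \quad i = 1, \dots, m,
\end{equation*}
and show it has a unique global solution $U(t) = L(S(t), g_0) g_i$. The key inputs are that $f$ in \eqref{opF} is a polynomial (cubic) Nemytskii map, hence of class $C^1$ from $E$ to $H$ with $f^{\prime}(g)$ depending continuously on $g$, and that on the global attractor $\ms{A}$ the orbit $S(\tau)g_0$ stays in a bounded ball of $E$ (Lemmas \ref{L:uwc}--\ref{L:vzvpc}) and even of $\mathbb{L}^\infty(\gw)$ (Theorem \ref{Thm2}); this makes the coefficient operator $f^{\prime}(S(\tau)g_0)$ a uniformly bounded perturbation of $A$ in the energy estimates. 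Standard estimates on $S(t)(g_0 + h) - S(t)g_0 - L(S(t), g_0)h$ via the variation-of-constants formula and Gronwall's inequality then establish the uniform differentiability of $S(t)$ on $\ms{A}$.

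Next I would derive the Liouville-type (trace) formula: writing $G_i(t)$ as in \eqref{Frder}, the volume $\|G_1(t) \wedge \cdots \wedge G_m(t)\|_{\Lambda^m H}$ satisfies
\begin{equation*}
	\|G_1(t) \wedge \cdots \wedge G_m(t)\| = \|g_1 \wedge \cdots \wedge g_m\| \, \exp \left( \int_0^t \textup{Tr}\left[\left(A + f^{\prime}(S(\tau)g_0)\right) \Gamma_m(\tau)\right] d\tau \right),
\end{equation*}
so that $q_m < 0$ forces a uniform exponential decay of all $m$-dimensional volumes generated from $\ms{A}$. To keep the trace under control I would split $\textup{Tr}[A\Gamma_m]$, bounded below in absolute value by the growth of the Dirichlet eigenvalues $\gl_j$ of $-\gd$, against $\textup{Tr}[f^{\prime}(S(\tau)g_0)\Gamma_m]$, which is bounded by a constant multiple of $m$ using the $\mathbb{L}^\infty$ bound on $\ms{A}$ together with the generalized H\"{o}lder and Sobolev inequalities already used above. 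Feeding the volume-contraction property into the abstract theorems of \cite[Chapter V]{rT88} yields $d_H(\ms{A}) \leq m$ and the stated fractal-dimension bound \eqref{hfd}.

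The main obstacle is the first step: establishing the uniform differentiability of the semiflow on $\ms{A}$ and the requisite uniform bounds on the linearized evolution operators $L(S(t), g_0)$, since this is where the cubic nonlinearity and the six-component coupling must be handled. Here the $\mathbb{L}^\infty$ regularity of $\ms{A}$ from Theorem \ref{Thm2} is essential, because it turns the otherwise only locally Lipschitz map $f$ into one that is globally Lipschitz, with controlled derivative, along attractor orbits. Once that is in place, the trace estimate and the invocation of the Constantin--Foias--Temam machinery are routine.
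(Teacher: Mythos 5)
The first thing to note is that the paper does not prove Proposition \ref{P3} at all: it is imported verbatim from the Constantin--Foias--Temam theory, with the single line ``cf.\ \cite[Chapter 5]{rT88}'' serving as its justification. Measured against that, your attempt has a structural problem: at the decisive moment you write ``feeding the volume-contraction property into the abstract theorems of \cite[Chapter V]{rT88} yields $d_H(\ms{A})\le m$ and the stated fractal-dimension bound,'' but those abstract theorems \emph{are} Proposition \ref{P3}. The genuinely nontrivial content of the proposition --- the covering/measure-theoretic argument that converts uniform exponential decay of $m$-dimensional volume elements $\|G_1(t)\wedge\cdots\wedge G_m(t)\|_{\Lambda^m H}$ into an upper bound on the Hausdorff measure of $\ms{A}$ in dimension $m$, and the separate, more delicate counting argument for the fractal dimension that produces the factor $\max_{1\le j\le m-1}\bigl(1+(q_j)_+/|q_m|\bigr)$ --- is exactly the part you do not supply. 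As a self-contained proof the proposal is therefore circular; as a reduction to the literature it coincides with what the paper does, but with extra detours.

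Two further points. First, most of your effort (uniform differentiability of $S(t)$ on $\ms{A}$, well-posedness of the variational equation, the $\mathbb{L}^\infty$ bound from Theorem \ref{Thm2}, the splitting of $\textup{Tr}[A\Gamma_m]$ against $\textup{Tr}[f'(S(\tau)g_0)\Gamma_m]$) goes into verifying the \emph{hypotheses} of Proposition \ref{P3} for the extended Brusselator semiflow; that is the content of the paper's Theorem \ref{Dmn} and its proof via \eqref{qm}--\eqref{dimc}, not of Proposition \ref{P3}, which is a conditional statement about an abstract semiflow with $q_m<0$. Second, the final inequality ``$\le 2m$'' in \eqref{hfd} is not a consequence of $q_m<0$ alone: it requires $(q_j)_+\le |q_m|$ for $1\le j\le m-1$, which in applications follows from the concave-in-$j$ upper bound on $q_j$ obtained in \eqref{qm} together with the choice of $m$ in \eqref{dimc}. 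Your proposal never addresses this, so even granting the cited machinery, the last inequality of the statement remains unproved.
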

It can be shown that for any given $t > 0$, $S(t)$ is Fr\'{e}chet differentiable in $H$ and uniformly Fr\'{e}chet differentiable in $\ms{A}$. Its Fr\'{e}chet derivative at $g_0$ is the bounded linear operator $L(S(t), g_0)$ given by 
$$
	L(S(t),g_0)G_0 \overset{\textup{def}}{=} G(t) = (U(t), V(t), \Phi (t), W(t), Z(t), \Psi (t)), 
$$
for $G_0 = (U_0, V_0, \Phi_0, W_0, Z_0, \Psi_0) \in H$, where $(U(t), V(t), \Phi (t), W(t), Z(t), \Psi(t))$ is the weak solution of the following extended Brusselator variational equation

\begin{equation} \label{vareq}
	\begin{split}
	\frac{\partial U}{\partial t} & = d_1 \gd U + 2u(t)v(t) U + u^2 (t) V - (b + k) U + D_1 (W - U),  \\
	\frac{\partial V}{\partial t} & = d_2 \gd V - 2u(t)v(t) U - u^2 (t) V + b U + D_2 (Z - V),  \\
	\frac{\partial \Phi}{\partial t} & = d_3 \gd \Phi + k U - (\gl + N)\Phi + D_3 (\Psi - \Phi), \\
	\frac{\partial W}{\partial t} & = d_1 \gd W + 2w(t)z(t) W + w^2 (t) Z - (b + k) W + D_1 (U - W),  \\
	\frac{\partial Z}{\partial t} & = d_2 \gd Z - 2w(t)z(t) W - w^2 (t) Z + b W + D_2 (V - Z),\\
	\frac{\partial \Psi}{\partial t} & = d_3 \gd \Psi + k W - (\gl + N)\Psi + D_3 (\Phi - \Psi),  \\[3pt]
	U(0) &= U_0, \;\; V(0) = V_0, \;\; \Phi(0) = \Phi_0, \; \;  W(0) = W_0, \;\; Z(0) = Z_0 \;\; \Psi(0) = \Psi_0. 
	\end{split}
\end{equation}
Here $g(t) = (u(t), v(t), \varphi (t), w(t), z(t), \psi (t)) = S(t)g_0$ is the weak solution of \eqref{eveq} with the initial condition $g(0) = g_0$.  The initial value problem \eqref{vareq} can be written as 
\begin{equation} \label{vareveq}
	\frac{dG}{dt} = (A + f^{\prime} (S(t)g_0))G,   \quad G(0) = G_0.
\end{equation}

As we have shown, the invariance of $\ms{A}$ implies $\ms{A} \subset B_0 \cap B_1$, so that 
\begin{equation} \label{gbe}
	\sup_{g_0 \in \ms{A}} \, \| S(t)g_0 \|^2 \leq K_1 \quad \textup{and} \quad \sup_{g_0 \in \ms{A}} \, \| S(t)g_0 \|_E^2 \leq Q_1 + Q_2.
\end{equation}

\begin{theorem} \label{Dmn}
The global attractors $\ms{A}$ for the extended Brusselator semiflow $\csg$ has a finite Hausdorff dimension and a finite fractal dimension.
\end{theorem}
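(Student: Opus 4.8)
The plan is to apply Proposition \ref{P3}: it suffices to produce an integer $m$ for which the number $q_m$ defined by the trace formula \eqref{trq} is negative. As a preliminary step I would record the facts already available: by the discussion preceding Theorem \ref{Dmn}, for each fixed $t>0$ the solution operator $S(t)$ is uniformly Fr\'{e}chet differentiable on $\ms{A}$, with derivative $L(S(t),g_0)$ given by the variational equation \eqref{vareq}--\eqref{vareveq}, so that the vectors $G_i(t)$ in \eqref{Frder} and the orthogonal projections $\Gamma_m(t)$ are well defined; by Theorem \ref{Thm2}, $\ms{A}$ is a bounded subset of $\mathbb{L}^\infty(\gw)$, say $\|g\|_{L^\infty}\le M_\infty$ for all $g\in\ms{A}$; and by \eqref{gbe} every orbit starting in $\ms{A}$ stays in $B_0\cap B_1$.

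The core of the argument is a uniform upper bound, in $\tau\ge 0$ and in $g_0\in\ms{A}$, for the integrand $\textup{Tr}\,[(A+f'(S(\tau)g_0))\Gamma_m(\tau)]$. Fix an orthonormal family $\{\phi_j(\tau)\}_{j=1}^m$ in $H$ spanning the range of $\Gamma_m(\tau)$, with $\phi_j=(U_j,V_j,\Phi_j,W_j,Z_j,\Psi_j)$, so that
$$
\textup{Tr}\,[(A+f'(S(\tau)g_0))\Gamma_m(\tau)]=\sum_{j=1}^m\langle A\phi_j,\phi_j\rangle+\sum_{j=1}^m\langle f'(S(\tau)g_0)\phi_j,\phi_j\rangle.
$$
For the first sum, the block-diagonal form \eqref{opA} gives $\langle A\phi_j,\phi_j\rangle=-(d_1\|\nabla U_j\|^2+d_2\|\nabla V_j\|^2+d_3\|\nabla\Phi_j\|^2+d_1\|\nabla W_j\|^2+d_2\|\nabla Z_j\|^2+d_3\|\nabla\Psi_j\|^2)\le -d_0\|\nabla\phi_j\|^2$ with $d_0=\min\{d_1,d_2,d_3\}$, and the generalized Sobolev--Lieb--Thirring inequality for orthonormal families in $H=[L^2(\gw)]^6$, cf. \cite{rT88}, yields $\sum_{j=1}^m\|\nabla\phi_j\|^2\ge\kappa_n m^{1+2/n}$ for a constant $\kappa_n>0$ depending only on $n$ and $|\gw|$. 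For the second sum, the Fr\'{e}chet derivative $f'(g)$ at $g\in\ms{A}$ is a $6\times 6$ matrix whose entries are either the fixed positive parameters $b,k,\gl,N,D_i$ or one of the coefficients $2uv,\ u^2,\ 2wz,\ w^2$; by the bound $M_\infty$ all of these are controlled in $L^\infty(\gw)$ by a constant $M=M(K_1,Q_1,Q_2)$, so that $\sum_{j=1}^m\langle f'(S(\tau)g_0)\phi_j,\phi_j\rangle\le M\sum_{j=1}^m\|\phi_j\|^2=Mm$. Combining the two sums,
$$
\textup{Tr}\,[(A+f'(S(\tau)g_0))\Gamma_m(\tau)]\le -d_0\kappa_n m^{1+2/n}+Mm,
$$
uniformly in $\tau$ and $g_0$, whence $q_m\le -d_0\kappa_n m^{1+2/n}+Mm$.

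Since $n\le 3$ forces $1+2/n\ge 5/3>1$, the right-hand side tends to $-\infty$ as $m\to\infty$, so there is an integer $m_*$ with $q_{m_*}<0$; Proposition \ref{P3} then gives $d_H(\ms{A})\le m_*$ and $d_F(\ms{A})\le 2m_*$, which proves the theorem. I expect the main obstacle to be the trace estimate of the nonlinear part $\sum_j\langle f'(S(\tau)g_0)\phi_j,\phi_j\rangle$: the cubic-autocatalytic terms $u^2v$ and $w^2z$ generate coefficients $2uv,\ u^2,\ 2wz,\ w^2$ in $f'$ which are a priori only in some $L^p$ on $E$, and it is exactly the $\mathbb{L}^\infty$ regularity of $\ms{A}$ from Theorem \ref{Thm2} that renders these coefficients uniformly bounded, so that the nonlinear contribution grows only linearly in $m$ and is dominated by the superlinear negative contribution of the diffusion operator $A$.
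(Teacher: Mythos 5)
Your proposal is correct and follows the paper's overall skeleton (Proposition \ref{P3}, a trace estimate for $A+f'(S(\tau)g_0)$ on the orthonormal family, the generalized Sobolev--Lieb--Thirring inequality for the superlinear negative term, then $q_m<0$ for large $m$), but you handle the nonlinear part of the trace by a genuinely different estimate. The paper does \emph{not} invoke the $\mathbb{L}^\infty$ bound on $\ms{A}$ at this point: it splits the nonlinear contribution into $J_1,J_2,J_3$, controls the coefficients $2uv,\,u^2,\,2wz,\,w^2$ only through $\|u\|_{L^4}\|v\|_{L^4}\le \ggd\|\nabla S(\tau)g_0\|^2\le\ggd(Q_1+Q_2)$, pays for this with a factor $\|\zeta_j\|_{L^4}^2$, converts that via Gagliardo--Nirenberg into $\|\nabla\zeta_j\|^{n/2}$, and finally absorbs it into $-\tfrac{d_0}{2}\sum_j\|\nabla\zeta_j\|^2$ by Young's inequality at the cost of $m\,Q_3(n)$. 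You instead use Theorem \ref{Thm2} to bound the matrix entries of $f'(S(\tau)g_0)$ uniformly in $L^\infty$, so the quadratic form is at most $M\|\phi_j\|^2=M$ per mode and the nonlinear contribution is linearly bounded in $m$ with no interpolation or absorption needed. Both arguments are valid; yours is shorter and more transparent once the $\mathbb{L}^\infty$ regularity of the attractor is in hand, while the paper's version is self-contained at the level of the $E$-bound $Q_1+Q_2$ (it would survive even without Theorem \ref{Thm2}) and yields the more explicit dimension bound \eqref{dimc}, whereas your constant $M$ inherits the less explicit Lipschitz constant $L(Q_1,Q_2)$ through the proof of the $\mathbb{L}^\infty$ bound. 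A minor point worth adding to your write-up: when you pair $A\phi_j$ with $\phi_j$ and when you apply Lieb--Thirring you need $\phi_j(\tau)\in E$, which, as the paper notes, follows from Gram--Schmidt applied to the vectors $G_i(\tau)\in E$.
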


\begin{proof}
By Proposition \ref{P3}, we shall estimate $\textup{Tr} \, [(A + f^{\prime} (S(\tau)g_0 )) \Gamma_{m}(\tau)]$. At any given time $\tau > 0$, let $\{\zeta_{j} (\tau): j = 1, \cdots , m\}$ be an $H$-orthonormal basis for the subspace 
$$
	\Gamma_m(\tau) H = \textup{Span}\,  \{G_1(\tau), \cdots , G_,(\tau) \},
$$
where $G_1 (t), \cdots , G_{m} (t)$ satisfy \eqref{vareveq} and, without loss of generality, assuming that the initial vectors $G_{1,0}, \cdots , G_{m,0}$ are linearly independent in $H$. By the Gram-Schmidt orthogonalization scheme,  $\zeta_{j}(\tau) \in E$ and $\zeta_{j} (\tau)$ is strongly measurable in $\tau, j = 1, \cdots , m$. Let $d_0 = \min \{d_1, d_2, d_3\}$. Denote the components of $\zeta_j (\tau)$ by $\zeta_j^i (\tau), i = 1, \cdots, 6$. Then we have
\begin{equation} \label{Trace}
	\begin{split}
	\textup{Tr} \, [(A + f^{\prime} (S(\tau)g_0 ) \Gamma_m(\tau)]& = \sum_{j=1}^{m} \left( \langle A \zeta_{j}(\tau), \zeta_{j}(\tau) \rangle + \langle  f^{\prime} (S(\tau)g_0 ) \zeta_{j}(\tau), \zeta_{j}(\tau) \rangle\right)  \\[4pt]
	& \leq - d_0 \sum_{j=1}^{m} \, \| \nabla \zeta_{j}(\tau) \|^2 + J_1 + J_2 + J_3,  
	\end{split}
\end{equation}
where
\begin{align*}
	J_1  =&\, \sum_{j=1}^{m} \int_{\gw} 2 u(\tau) v(\tau) \left( |\zeta_{j}^1 (\tau) |^2 -  \zeta_{j}^1 (\tau) \zeta_{j}^2 (\tau)  \right) dx \\[4pt]
	&+ \sum_{j=1}^{m} \, \int_{\gw} 2 w(\tau) z(\tau) \left( |\zeta_{j}^4 (\tau) |^2 -  \zeta_{j}^4 (\tau) \zeta_{j}^5 (\tau)  \right) dx,
\end{align*}
\begin{align*}
	J_2 &= \sum_{j=1}^{m} \int_{\gw} \left( u^2(\tau) \left( \zeta_{j}^1 (\tau) \zeta_{j}^2 (\tau) - | \zeta_{j}^2 (\tau) |^2 \right) + w^2(\tau) \left( \zeta_{j}^4 (\tau) \zeta_{j}^5 (\tau) - | \zeta_{j}^5 (\tau) |^2 \right)\right) dx  \\[4pt]
	&\leq  \sum_{j=1}^{m} \int_{\gw} \left( u^2(\tau) | \zeta_{j}^1 (\tau) | |\zeta_{j}^2 (\tau)| + w^2(\tau) | \zeta_{j}^4 (\tau) | |\zeta_{j}^5 (\tau)| \right) dx,
\end{align*}
and

\begin{align*}
	J_3  = &\,  \sum_{j=1}^{m} \int_{\gw} \left( - (b + k) (|\zeta_{j}^1 (\tau) |^2 + |\zeta_{j}^4 (\tau) |^2) + b (\zeta_{j}^1 (\tau) \zeta_{j}^2 (\tau) + \zeta_{j}^4 (\tau) \zeta_{j}^5 (\tau)) \right) dx \\
	& +  \sum_{j=1}^{m} \int_{\gw} \left(k(\zeta_j^1 (\tau)\zeta_j^3 (\tau) + \zeta_j^4 (\tau)\zeta_j^6 (\tau)) - (\gl + N)(|\zeta_j^3 (\tau)|^2 + |\zeta_j^6 (\tau)|^2)\right) dx \\
	& - \sum_{j=1}^{m} \int_{\gw} \left(D_1 \left(\zeta_j^1 (\tau) - \zeta_j^4 (\tau)\right)^2 + D_2 \left(\zeta_j^2 (\tau) - \zeta_j^5 (\tau)\right)^2 + D_3 \left(\zeta_j^3 (\tau) - \zeta_j^6 (\tau)\right)^2\right) dx \\
	\leq &\, \sum_{j=1}^{m} \left(\int_{\gw} \, b \left(\zeta_{j}^1 (\tau) \zeta_{j}^2 (\tau) + \zeta_{j}^3 (\tau) \zeta_{j}^4 (\tau) \right) dx + \int_{\gw} k(\zeta_j^1 (\tau)\zeta_j^3 (\tau) + \zeta_j^4 (\tau)\zeta_j^6 (\tau))dx \right).
\end{align*}
By the generalized H\"{o}lder inequality, the embedding $H_0^1 (\gw) \hookrightarrow  L^4 (\gw)$ (for $n \leq 3$) and \eqref{gbe}, we get
\begin{equation} \label{J1eq}
		\begin{split}
	J_1 \leq &\, 2 \sum_{j=1}^{m} \| u(\tau) \|_{L^4} \| v(\tau) \|_{L^4}  \left( \| \zeta_{j}^1 (\tau) \|_{L^4}^2 + \| \zeta_{j}^1(\tau) \|_{L^4}  \| \zeta_{j}^2 (\tau) \|_{L^4} \right) \\
	& + 2 \sum_{j=1}^{m} \| w(\tau) \|_{L^4} \| z(\tau) \|_{L^4}  \left( \| \zeta_{j}^4 (\tau) \|_{L^4}^2 + \| \zeta_{j}^4(\tau) \|_{L^4}  \| \zeta_{j}^5 (\tau) \|_{L^4} \right) \\
	\leq &\, 4 \sum_{j=1}^{m} \| S(\tau)g_0 \|_{L^4}^2  \| \zeta_{j} (\tau) \|_{L^4}^2 \leq 4 \delta \sum_{j=1}^{m} \| \nabla S(\tau)g_0 \|^2   \| \zeta_{j} (\tau) \|_{L^4}^2 \\
	\leq &\, 4 \delta (Q_1 + Q_2)  \sum_{j=1}^{m} \| \zeta_{j} (\tau) \|_{L^4}^2, 
		\end{split}
\end{equation}
where $\delta$ is the embedding coefficient given in the beginning of Section 4. Now we use the Garliardo-Nirenberg interpolation inequality for Sobolev spaces \cite[Theorem B.3]{SY02},
\begin{equation} \label{GNineq}
	\| \zeta \|_{W^{k,p}} \leq C \| \zeta\|_{W^{m,q}}^{\theta} \| \zeta \|_{L^{r}}^{1 - \theta}, \quad \textup{for} \; \zeta \in W^{m,q}(\gw),
\end{equation}
provided that $p, q, r \geq 1, 0 < \theta \leq 1$, and
$$
	k - \frac{n}{p} \leq \theta \left( m - \frac{n}{q} \right)  - (1 - \theta ) \frac{n}{r},   \quad \textup{where} \; \, n = \textup{dim} \, \gw.
$$
Here with $W^{k, p}(\gw) = L^4(\gw), W^{m, q}(\gw) = H_{0}^{1}(\gw), L^{r}(\gw) = L^2(\gw)$, and $\theta = n/4 \leq 3/4$, it follows from \eqref{GNineq} that
\begin{equation} \label{inter}
	\| \zeta_{j} (\tau) \|_{L^4} \leq C \| \nabla \zeta_{j} (\tau) \|^{\frac{n}{4}} \| \zeta_{j} (\tau) \|^{1 - \frac{n}{4}} = C \| \nabla \zeta_{j} (\tau) \|^{\frac{n}{4}}, \quad j = 1, \cdots , m,
\end{equation}
since $\| \zeta_{j}(\tau) \| = 1$, where $C$ is a universal constant. Substitute \eqref{inter} into \eqref{J1eq} to obtain
\begin{equation} \label{J1est}
	J_1 \leq 4\delta (Q_1 + Q_2) C^2 \sum_{j=1}^{m} \, \| \nabla \zeta_{j} (\tau) \|^{\frac{n}{2}}.
\end{equation}
Similarly, we can get
\begin{equation} \label{J2est}
	J_2 \leq \delta (Q_1 + Q_2) \sum_{j=1}^{m} \| \zeta_{j} (\tau) \|_{L^4}^2 \leq \delta (Q_1 + Q_2) C^2 \sum_{j=1}^{m} \, \| \nabla \zeta_{j} (\tau) \|^{\frac{n}{2}}.
\end{equation}
Moreover, we have 
\begin{equation} \label{J3est}
	J_3 \leq \sum_{j=1}^{m}  (b + k) \| \zeta_{j} (\tau) \|^2 = m(b + k).
\end{equation}
Substituting \eqref{J1est}, \eqref{J2est} and \eqref{J3est} into \eqref{Trace}, we obtain
\begin{equation} \label{Trest}
	\begin{split}
	\textup{Tr} \, [(A + f^{\prime} (S(\tau)g_0 ) \Gamma_m(\tau)] \leq &\, - d_0 \sum_{j=1}^{m} \| \nabla \zeta_{j}(\tau) \|^2 \\
	&+ 5\delta (Q_1 + Q_2) C^2 \sum_{j=1}^{m} \| \nabla \zeta_{j}(\tau) \|^{\frac{n}{2}} + m(b + k).
	\end{split}
\end{equation}
By Young's inequality, for $n \leq 3$, we have
$$
	5 \delta (Q_1 + Q_2) C^2 \sum_{j=1}^{m} \| \nabla \zeta_{j}(\tau) \|^{\frac{n}{2}}  \leq \frac{d_0}{2} \sum_{j=1}^{m} \|\nabla \zeta_{j}(\tau) \|^2 + m \, Q_3(n),
$$
where $Q_3(n)$ is a universal positive constant depending only on $n = $ dim $(\gw)$. Hence,
\begin{equation*} 
	\textup{Tr} \, [(A + f^{\prime} (S(\tau)g_0 ) \Gamma_m(\tau)] \leq - \frac{d_0}{2} \sum_{j=1}^{m} \| \nabla \zeta_{j}(\tau) \|^2 + m ( Q_3 (n) + b + k), \quad \tau > 0, \; g_0 \in \ms{A}. 
\end{equation*}
According to the generalized Sobolev-Lieb-Thirring inequality \cite[Appendix, Corollary 4.1]{rT88}, since $\{ \zeta_1 (\tau), \cdots , \zeta_{m} (\tau) \}$ is an orthonormal set in $H$, there exists a universal constant $Q^* > 0$ only depending on the shape and dimension of $\gw$, such that

$$
	\sum_{j=1}^{m} \| \nabla \zeta_{j}(\tau) \|^2  \geq Q^* \frac{m^{1 + \frac{2}{n}}}{|\gw |^{\frac{2}{n}}}.
$$
Therefore, 
\begin{equation} \label{finest}
	\textup{Tr} \, [(A + f^{\prime} (S(\tau)g_0 ) \Gamma_m(\tau)] \leq - \frac{d_0 Q^*}{2 |\gw |^{\frac{2}{n}}} m^{1 + \frac{2}{n}} + m (Q_3 (n) + b + k), \quad \tau > 0,\; g_0 \in \ms{A}.
\end{equation}
Then we can conclude that
\begin{equation} \label{qm}
	\begin{split}
	q_m & = \limsup_{t \to \infty} \, q_m (t) \\
	& =  \limsup_{t \to \infty} \sup_{g_0 \in \ms{A}} \;  \, \sup_{\substack{g_{i} \in H, \|g_{i} \| = 1\\ i = 1, \cdots, m}} \; \, \left( \frac{1}{t} \int_{0}^{t} \textup{Tr}  \left[\left( A + f^{\prime} (S(\tau) g_0 ) \right) \Gamma_{m} (\tau)\right] \, d\tau \right)  \\
	& \leq - \, \frac{d_0 Q^*}{2 |\gw |^{\frac{2}{n}}} m^{1 + \frac{2}{n}}  + m (Q_3(n) + b + k) < 0,
	\end{split}
\end{equation}
if the integer $m$ satisfies the following condition,
\begin{equation} \label{dimc}
	m - 1 \leq \left( \frac{2(Q_3(n) + b + k)}{d_0 Q^*} \right)^{n/2}| \gw | < m.
\end{equation}
According to Proposition \ref{P3}, we have proved that the Hausdorff dimension and the fractal dimension of the global attractor $\ms{A}$ are finite and their upper bounds are given by
$$
	d_{H} (\ms{A}) \leq m \quad \textup{and} \quad d_{F} (\ms{A}) \leq 2m,
$$
where $m$ satisfies \eqref{dimc}. 
\end{proof}

As a conclusion remark, in this work it is shown that global dynamics of this extended Brusselator system of cubic-autocatalytic, partially reversible reaction-diffusion equations with linear coupling between two compartments and with homogeneous Dirichet boundary condition is dissipative and asymptotically determined by a finite-dimensional global attractor in the $H_0^1$ product phase space. 

We emphasize that even though this multi-dimensional reaction-diffusion system lacks the mathematical dissipative condition, the existence of a global attractor is established without assuming any non-negativity of initial data (or solutions) and without any conditions imposed on any of the positive parameters. The contributed methodology of grouping and re-scaling estimation succeeded in proving the dissipative longtime dynamics has a promising potentiality to be adopted and adapted for future investigations of many open problems and complex coupling models in system biology and in network dynamics. 

The extensions of the obtained results can be pursued in several directions. Semicontinuity of the global attractor with respect to one of the diffusive coefficients and/or with respect to the reverse reaction rate coefficient when it tends to zero is a meaningful question of singular perturbation of global dynamics. One can also consider similarly coupled reaction-diffusion systems on a higher-dimensional domain of space dimension $n > 3$ and on an unbounded domain to work with various different phase spaces. 

A challenging problem is to study the two-cell or multi-cell coupling system of the Brusselator equations or the Gray-Scott equations in which one subsystem is defined on a subdomain and the other subsystem is defined on another non-overlapping subdomain with the flux-type coupling through the joint boundary, such as in many biological and physiological models of signal transductions. This kind of reaction-diffusion systems of investigation will be very close to the real-world problems in cell biology and in physical chemistry.



\bibliographystyle{amsplain}

\end{document}